\documentclass[11pt,english]{article}
 \usepackage[english]{babel}
\usepackage{cite} 
 \usepackage[dvips]{graphicx}

\usepackage[usenames, dvipsnames]{xcolor}
\usepackage[utf8x]{inputenc}
\usepackage[T1]{fontenc}
\usepackage{tikz}
\usepackage{pgfplots}

\usepackage[backref=page]{hyperref}

\usepackage{amsfonts,amsmath,amsxtra,amsthm,amssymb,latexsym}
\textwidth=165mm \textheight=235mm \hoffset=-20mm \voffset=-20mm

 

\newtheorem{theorem}{Theorem}[section]
 
 \newtheorem{lemma}[theorem]{Lemma}
 \newtheorem{proposition}[theorem]{Proposition}
 \theoremstyle{definition}
 \newtheorem{definition}[theorem]{Definition}
 \theoremstyle{remark}
 \newtheorem{remark}[theorem]{Remark}
 
 \numberwithin{equation}{section}

\newcommand{\R}{\mathbb R}
\begin{author}
{Jaime Angulo Pava $^1$ and M\'arcio Cavalcante $^2$}
\end{author}

\begin{title}
{Linear instability of stationary solutions  for the Korteweg-de Vries equation on a star graph}
\end{title}
\date{}
\begin{document}
\maketitle

\centerline{$^1$ Department of Mathematics,
IME-USP}
 \centerline{Rua do Mat\~ao 1010, Cidade Universit\'aria, CEP 05508-090,
 S\~ao Paulo, SP, Brazil.}
 \centerline{\it angulo@ime.usp.br}
 
  \centerline{ $^2$ Institute of Mathematics,  Universidade Federal de Alagoas,
Macei\'o, AL, Brazil.}
 \centerline{\it  marcio.melo@im.ufal.br }

\begin{abstract}
The aim of this work is to establish a linear instability criterium of stationary solutions for the Korteweg-de Vries model on a  star graph with a structure represented by a finite collections of semi-infinite edges. By considering a boundary condition of $\delta$-type interaction at the graph-vertex,   we show that the continuous  tail and bump profiles are linearly unstable in a balanced star graph. The use of the analytic perturbation theory of operators and the extension theory of symmetric operators is a piece fundamental in our stability analysis.

The  arguments presented in this investigation
has prospects for the study of the instability of
stationary waves solutions  of other nonlinear evolution equations on star graphs.

\end{abstract}

\qquad\\
\textbf{Mathematics  Subject  Classification (2000)}. Primary
35Q51, 35Q53, 35J61; Secondary 47E05.\\
\textbf{Key  words}. Korteweg-de Vries model, star graph, tail, bump, $\delta$-type interaction, perturbation theory, extension theory, instability.

\section{Introduction}

A quantum graph is a metric graph, i.e., a network-shaped structure of vertices connected by edges, with a linear Hamiltonian operator (such as a Schr\"odinger-like operator or the  Airy-like operator) suitably defined on functions that are supported on the edges. It arises as a simplified models in 
for wave propagation, for instance, in a  quasi one-dimensional 
(e.g. meso- or nanoscale) system that looks like a thin 
neighborhood of a  graph. Quantum graph have been used to describe a variety of physical problems and applications, such as in chemistry and engineering (see \cite{BK, BlaExn08, BurCas01, K, Mug15} for details and references). Recently, they have attracted much attention in the context of soliton transport in networks and branched structures (see \cite{SBM, SMSSK}) since wave dynamics in networks can be modeled by nonlinear evolution equations suitably defined on the edges. Soliton and other nonlinear waves in branched systems appear in different system of condensed matter, Josephson junction networks, polymers, optics, neuroscience, DNA, blood pressure waves in large arteries or in shallow water equation to describe a fluid network (see \cite{AdaNoj13a, Berkolaiko, BK, BeK, BurCas01, CM, Fid15, K,  Mehmeti, Mug15, Noj14} and references therein). To address these issues, in general the problem is difficult to tackle because both the equation
of motion and the geometry are complex. A first direction is to look at what happens in a simpler geometry, such as  a $\mathcal Y$-junction framework, and to examine the linear equation associated to the  nonlinear model. But, in many cases however the nonlinearity can not be neglected, by instance, in fluid system to describe a fluid network.

Thus, in the last years the study of nonlinear dispersive models
on metric graph has attracted a lot of attention of mathematician 
and physicists. In particular, the prototype of framework (graph-geometry)  for
description of these phenomena have been a {\it star graph}
$\mathcal G$, namely, a metric graph with $N$ 
half-lines of the form $(0, +\infty)$  connecting
at a common vertex $\nu=0$, together with a nonlinear equation suitably defined on the edges such as the nonlinear Schr\"odinger
equation (see Adami {\it{et al.}} \cite{AdaNoj14, AdaNoj15},  Angulo {\it{et al.}}  \cite{AngGol17a, AngGol17b}),  or the BBM equation (see Bona  {\it{et al.}} \cite{bona} and Mugnolo  {\it{et al.}} \cite{Mugnolobbm}). We note that with the introduction of the nonlinearity in the dispersive model, the network provides a nice field where one can looking for interesting soliton propagations and nonlinear dynamics in general. However, there are few exact analytic study of soliton propagation through networks by the nonlinear flow induced by the equation.  Results on the stability or instability mechanism of these profiles are still unclear.  One of the objectives of this work is to shed light on these themes.
A central point that makes this analysis a delicate problem is the presence of a vertex where the underlying one-dimensional star graph should bifurcate (or multi-bifurcate in a general metric graph). We note that not branching angles but the topology of bifurcation is essential. Indeed,  a soliton-profile coming into the vertex along one of the bonds shows a complicated motion around the vertex
such as reflection and emergence of the radiation there, moreover, in particular one cannot see easily how energy travels across the network. Therefore,  the study of the dynamic for non-linear evolution models becomes a challenge and it will depend  heavily on the conditions on the vertex (or vertices) to have a fruitful description of the system . 

In this work, we consider the well-known Korteweg-de Vries (KdV) equation in context of a metric  graph $\mathcal G$,  it which will have  a structure represented by  finite  collections of semi-infinite edges  parametrized by $(-\infty, 0)$ or $(0, +\infty)$.  In this case, $\mathcal G$  is sometimes also called a star-shaped metric  graph (see Figure \ref{figure2}). 

 We recall that the KdV equation on all the line 
$$
\partial_t u + \partial_x^3u + \partial_x u+ u\partial_xu = 0, \quad u=u(x,t),\;x\in \mathbb R, 
$$ 
it was first derived by Korteweg and de-Vries \cite{KDV} in 1895 as a model for long waves propagating
on a shallow water surface. Recently, the KdV equation have been appearing in other context. More precisely, this equation has been used  as a model to study blood
pressure waves in large arteries. In this way, for example, Chuiko and  Dvornik  in \cite{Chuiko} proposed a new computer model for systolic pulse waves within the cardiovascular system based on the KdV equation. Also, Cr\'epeau and  Sorine  in \cite{Crepeau} showed that some particular solutions of the KdV equation, more exactly, the 2 and 3-soliton well-known solutions, seem to
be good candidates to match the observed pressure pulse waves.
\vskip0.2in
\begin{figure}[htp]\label{figure2}
	\centering 
	\begin{tikzpicture}[scale=3]
	\draw [thick, dashed] (-1.3,0)--(-1,0);
	\draw[thick](-1,0)--(0,0);
	\node at (-0.65,0.1){$(-\infty,0)$};
	
		\draw [thick, dashed] (1.3,0)--(1,0);
	\draw[thick](1,0)--(0,0);
	\node at (0.65,0.1){$(0,+\infty)$};
	
	\draw[thick](0,0)--(-0.89,0.45);
	\node at (-0.46,0.33)[rotate=-3
	0]{$(-\infty,0)$};
	\draw [thick, dashed] (-0.89,0.45)--(-1.19,0.6);
	
	\draw[thick](0,0)--(-0.89,-0.45);
	\node at (-0.46,-0.33)[rotate=30]{$(-\infty,0)$};
	\draw [thick, dashed] (-0.89,-0.45)--(-1.19,-0.6);
	\fill (0,0)  circle[radius=1pt];

	\draw[thick](0,0)--(0.89,0.45);
	\node at (0.46,0.33)[rotate=3
	0]{$(0,+\infty)$};
	\draw [thick, dashed] (0.89,0.45)--(1.19,0.6);
	\draw[thick](0,0)--(0.89,-0.45);
	\node at (0.46,-0.33)[rotate=-30]{$(0,+\infty)$};
	\draw [thick, dashed] (0.89,-0.45)--(1.19,-0.6);
	\fill (0,0)  circle[radius=1pt];
	\end{tikzpicture}
	\caption{A star-shaped metric graph with $6$ edges}
\end{figure}
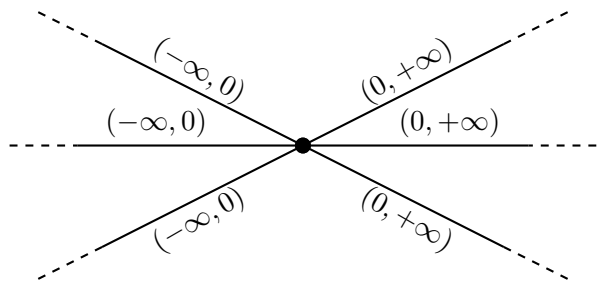

\vskip0.2in

In the mathematical context, the Cauchy problem for the KdV posed on all the line, torus, on the half-lines and on a finite interval have been well studied in the last years, we refer as an example \cite{Bonaint, CK, Faminskii, Guo, Holmer, Jia, KPV, Kishimoto} and references therein.  We also notice the recent result of Cavalcante and Mu\~noz \cite{CM2} about  that solitons
posed initially far away from the origin are strongly stable for the IBVP associated to the
KdV equation posed on the right half-line, assuming homogeneous boundary conditions.

Studies for the linearized Korteweg-de Vries equation on star-shaped metric graphs have started appearing recently. In
\cite{Sobirov1} was studied existence and uniqueness of solutions for the linearized KdV equation on metric star graphs by using potential theory, where the solutions were obtained
in the class of Schwartz and in Sobolev classes with high order. Very recently, Mugnolo, Noja and Seifert \cite{MNS} obtained a characterization of all boundary conditions under which the
Airy-type evolution equation 
\begin{equation}\label{kdv0}
\partial_{t}u_\bold e(x,t)=\alpha_ \bold e \partial_x ^3u_\bold e(x,t) + \beta_ \bold e \partial_x u_\bold e(x,t), \quad x\neq 0, t\in \mathbb{R}, \bold e\in \bold E,
\end{equation}
generates either a semigroup or a group on a metric star graph $\mathcal G$ with a structure represented by the set $
\bold E\equiv \bold E_{-}\cup \bold E_{+}$
where $\bold E_{+}$ and $\bold E_{-}$ are finite or countable collections of semi-infinite edges $\bold e$ parametrized by $(-\infty, 0)$ or $(0, +\infty)$, respectively. The half-lines are connected at a unique vertex $\nu=0$. Here  $(\alpha_ \bold e)_{\bold e\in \bold E}$ and  $(\beta_ \bold e)_{\bold e\in \bold E}$ are two sequences of real numbers.

As far as we know, the study of the nonlinear Korteweg-de Vries equation in star graphs  is relatively underdeveloped.   We notice the recent result of Cavalcante in \cite{Cav1} about the local well-posedness for the Cauchy problem associated to Korteweg-de
Vries equation on a metric star graph with three semi-infinite edges given by one negative half-line and two positives half-lines attached to a common vertex $\nu=0$ (the $\mathcal Y$-junction framework). Recent results of stabilization and boundary controllability for KdV equation on bounded star-shaped graphs was obtained by Ammari and  Crepeau \cite{Ammari} and  Cerpa, Crepeau and Moreno\cite{Cerpa}.

The focus of our study here will be the following vectorial KdV model 
\begin{equation}\label{kdv3}
\partial_{t}u_\bold e(x,t)=\alpha_ \bold e \partial_x ^3u_\bold e(x,t) + \beta_ \bold e \partial_xu_\bold e(x,t)+2 u_\bold e (x,t) \partial_x u_\bold e (x,t),\;\;\; \bold e\in \bold E=\bold E_{-}\cup \bold E_{+},
\end{equation}
on a star graph $\mathcal G$.  We are interested for the  first time (as far as we know) in the dynamics generated by the flow of the KdV model \eqref{kdv3} around solutions of stationary type
$$
(u_{\bold e}(x,t))_{\bold e\in \bold E}=(\phi_{\bold e}(x))_{\bold e\in \bold E}
$$
where for $\bold e\in \bold E_{-}$ the profile $\phi_{\bold e}: (-\infty, 0)\to \mathbb R$ satisfy $\phi_{\bold e}(-\infty)=0$, and for  $\bold e\in \bold E_{+}$ $\phi_{\bold e}: (0, \infty)\to \mathbb R$ satisfy $\phi_{\bold e}(+\infty)=0$. The existence of profiles of stationary type, namely, solutions of the following nonlinear elliptic equation
\begin{equation}\label{kdv3a}
\alpha_ \bold e  \frac{d^2}{dx^2} \phi_{\bold e}(x)+ \beta_\bold e \phi_{\bold e}(x)+\phi_{\bold e}^2(x)=0,\;\;\; \bold e\in \bold E,
\end{equation}
are well know and the profile depend of the soliton associated to the KdV on  the full line, 
\begin{equation}\label{kdv3b}
\phi_{\bold e}(x)=c(\alpha_ \bold e,  \beta_\bold e) sech ^2(d(\alpha_ \bold e,  \beta_\bold e)x +p_\bold e),\;\;\; \bold e\in \bold E.
\end{equation}
For instance,  for $\alpha_ \bold e>0$ and $0>\beta_\bold e$, for each $\bold e\in \bold E$, we can obtain different family of profiles satisfying the conditions $\phi_{\bold e}(\pm \infty)=0$, $\bold e\in \bold E_{\pm}$ (see section 3 below). The specific value  of the shift $p_\bold e$ will depend which other (or others) condition(s)  imposed on the profile $\phi_{\bold e}$  is determined on the vertex of the graph $\nu=0$. 

The  main interest of our study here with regard to the nonlinear model \eqref{kdv3}  is to establish a linear instability criterium for  stationary profiles  on a star graph $\mathcal G$. 

 A starting point  for the one previously described, it is to determine when the Airy type operator 
\begin{equation}\label{Air2}
A_0:  (u_ \bold e)_{\bold e\in \bold E}\to \Big (\alpha_ \bold e \frac{d^3}{dx^3}u_\bold e + \beta_ \bold e \frac{d}{dx} u_\bold e\Big)_{\bold e\in \bold E}
\end{equation}
being seen as an unbounded operator on a certain Hilbert space,  it will have extensions $A_{ext}$ on $L^2(\mathcal G)$ such that the dynamics induced by the linear evolution problem
\begin{equation}\label{evolu}
\left\{ \begin{array}{ll}
z_t=A_{ext} z,\\
z(0)= u_0\in D(A_{ext}),
  \end{array}  \right.
\end{equation}
it is given by a $C_0$-group, $z(t)=e^{tA_{ext}}u_0$. In this  point the theory in \cite{MNS} and \cite{SSVW} give us that properties of the induced dynamics can be obtained by studying boundary operators in the corresponding boundary space induced  by the vertex of the graph. Here we are interested when the extension $A_{ext}$ is a  skew-self-adjoint operator. So, from  Stone's Theorem we obtain that the  dynamics in \eqref{evolu} is given by a unitary group. Two delicate issues emerge at this point of the analysis and which are necessary in our study. The first one  is how to determine all the possible skew-self-adjoint extensions of   the Airy  operator $A_0$, and the second one to find some kind of formula for the unitary groups associated to these extensions. With regard to the first issue,  a characterization of all skew-self-adjoint extensions of $A_0$ was obtained recently by Mugnolo, Noja and Seifert in \cite{MNS} via Krein spaces (see also Schubert, Seifert, Voigt and Waurick in \cite{SSVW}). In Section 2.1 we give a brief description of this theory. In particular, we construct  in Proposition \ref{L}  below a family of  skew-self-adjoint extensions  $(A_Z, D(A_Z))$ of $\delta$-type interaction  for $A_0$ in the case of a star graph with two half-lines. Thus, we determine  which stationary solutions $ (\phi_ \bold e)_{\bold e\in \bold E}$ with $\phi_ \bold e$ defined in \eqref{kdv3b} belong to the domain $D(A_Z)$. In this form,  we found that the only possible profiles can  be of the type either tail or bump (see Figures 2 and 3 below). In section 6, we extend the construction of  skew-self-adjoint extensions   of $\delta$-type interaction  for $A_0$ on a general star graph $\mathcal G$ with  $|\bold E_{+}|=|\bold E_{-}|=n\geqq 2$ (balanced star graphs).

With regard to specific formulas  for  the unitary groups  associated to the possible skew-self-adjoint extensions of  the Airy  operator $A_0$, it is an open problem in general. Now from the extension theory (see Proposition \ref{ext}) we know that there must be $9|\bold E_{+}|^2$ family of unitary groups for the case of a star graph $\mathcal G$ with  $|\bold E_{+}|=|\bold E_{-}|=n\geqq 2$, each one having its own representation.  In Proposition \ref{group24} (Appendix) via Green functions, we establish by first time a formula for the unitary group associated to the  one-parameter family of  skew-self-adjoint extensions  of $\delta$-type  in    \eqref{L_Z} for $A_0$ on a balanced star graph $\mathcal G$ ($|\bold E_{+}|=|\bold E_{-}|$).

Now, in Theorem \ref{crit} (section 4)  we establish our linear instability criterium  for stationary solutions for the Korteweg-de Vries model \eqref{kdv3} on a star graph not necessarily balanced. This instability criterium can be seen as an extension of  Lopes's result in \cite{Lopes} (see also Grillakis, Shatah and Strauss  \cite{GrilSha87, GrilSha90} and Pego and Weinstein \cite{pego}). Theorem \ref{crit} will be applied to the  family  of stationary profiles of tail and bump type that  appear with vertex conditions of $\delta$-type, and we  obtain that they are linearly unstable when $|\bold E_{+}|=|\bold E_{-}|=n\geqq 1$ (see Figures 2-3-4-5 and Theorems \ref{main} and \ref{main2}).  In the case $n=1$, linear instability analysis  is based in the   analytic perturbations theory of operators, while the case of $n\geqq 2$, analytic perturbation and the  extension theory of symmetric operators of Krein and von Neumann are required. We have divided our stability study into two cases ($n=1$ and $n\geqq 2$ separately) to make it clear how the geometry of the graph induces an addition of new tools in the analysis.

The existence and stability of other  families of stationary profiles for the KdV model \eqref{kdv3} defined on a different graph-geometry (non-balanced graphs) is being the goal of a work in progress. As well as, a stability study for the generalized KdV model
\begin{equation}
\partial_{t}u_\bold e=\alpha_ \bold e \partial_x ^3u_\bold e + \beta_ \bold e \partial_xu_\bold e+p u^{p-1}_\bold e  \partial_x u_\bold e ,\;\;\; \bold e\in \bold E,
\end{equation}
and $p\in \mathbb N$, $p\geqq 2$.

The paper is organized as follows. In the Preliminaries (Section 2) we give some brief
description of the existence of unitary groups for Airy operators via extension theory and examples in the case of boundary conditions of $\delta$-type at the vertex $\nu=0$ for  two half-lines. The existence of stationary solutions of tail and bump type is given in Section 3. Our linear instability criterium on a general star graph  is established in Section 4. Section 5 and 6 are dedicated to establish our main results of linear instability of tail and bump profile for the KdV model \eqref{kdv3}. In Appendix we briefly discuss some tools of the extension theory of Krein and von Neumann used in our study of linear instability.  Also, we give a  unitary group representation associated to the  one-parameter family of  skew-self-adjoint extensions  $(A_Z, D(A_Z))$  defined in \eqref{domain8} and 
  $(H_Z, D(H_Z))$ defined in \eqref{L_Z}.

\vskip0.2in

\noindent \textbf{Notation.} Let $-\infty\leq a<b\leq\infty$. We denote by $L^2(a,b)$  the  Hilbert space equipped with the inner product $(u,v)=\int\limits_a^b u(x)\overline{v(x)}dx$.
 By $H^n(\Omega)$  we denote the classical  Sobolev spaces on $\Omega\subset \mathbb R$ with the usual norm.   We denote by  $\mathcal{G}$ the star graph parametrized by
  $\bold E=\bold E_{-}\cup \bold E_{+}$, where  $\bold E_{-}$ and $\bold E_{+}$ are sets of half-lines of the form  $(-\infty, 0)$ and   $(0, +\infty)$, respectively,  attached to a common vertex $\nu=0$. On the graph we define the classical spaces 
  \begin{equation*}
  L^p(\mathcal{G})=\bigoplus\limits_{\bold e\in  \bold E_{-} }L^p(-\infty, 0)  \oplus \bigoplus\limits_{\bold e\in  \bold E_{+} }L^p(0, +\infty), \quad \,p>1,
  \end{equation*}   
 and 
  \begin{equation*}  
 \quad H^n(\mathcal{G})=\bigoplus\limits_{\bold e\in  \bold E_{-} }H^n(-\infty, 0) \oplus \bigoplus\limits_{\bold e\in  \bold E_{+} } H^n(0, +\infty), 
 \end{equation*}   
with the natural norms. Also, for $u= (u_ \bold e)_{\bold e\in \bold E}, v= (v_ \bold e)_{\bold e\in \bold E}\in L^2(\mathcal G)$, the inner product is defined by
$$
\langle u, v\rangle= \sum _{\bold e\in \bold E_-}\int_{-\infty}^0 u_ \bold e\overline{v_ \bold e}dx +\sum _{\bold e\in \bold E_+}\int_0^{\infty} u_ \bold e\overline{v_ \bold e}dx. 
$$
We also denote sometimes $(u_ \bold e)_{\bold e\in \bold E}$, as $(u_ \bold e)_{\bold e\in \bold E}=(u_{1, -}, ...,u_{m, -}, u_{1, +},...,u_{n, +})$. Depending on the context we will use the following notations for different objects. By $||\cdot||$ we denote  the norm in $L^2(\Omega)$ ($\Omega=(-\infty, 0)$ or $(0, +\infty)$)) or in $L^2(\mathcal{G})$. By $||\cdot||_p$ we denote  the norm in $L^p(\Omega)$  or in $L^p(\mathcal{G})$. By depending of the context  we identify  $u=(u_{-}, u_{+})\in  L^2(\mathcal{G})$ as a element in $\Pi_{i=1}^m L^2(-\infty, 0)\times \Pi_{i=1}^n L^2(0, +\infty)$, with $m=|\bold E_{-}|$ and $n=|\bold E_{+}|$ or as $(m+n)\times 1$-matrix column.
  
 Let $A$ be a  closed densely defined symmetric operator in the Hilbert space $H$. The domain of $A$ is denoted by $D(A)$. The deficiency indices of $A$ are denoted by  $n_\pm(A):=\dim Ker(A^*\mp iI)$, with $A^*$ denoting the adjoint operator of $A$.  The number of negative eigenvalues counting multiplicities (or Morse index) of $A$ is denoted by  $n(A)$.

\section{Preliminaries}

By convenience of the reader, we give some brief description about the characterization of all skew-self-adjoint extensions of the Airy operators associated to \eqref{kdv0}. Our strategy will follow the theory recently established  by Mugnolo, Noja and Seifert in \cite{MNS}.

\subsection{Airy operators and the existence of unitary groups} 

In this subsection, we will define properly for sequences of real numbers $(\alpha_ \bold e)_{\bold e\in \bold E}$ and  $(\beta_ \bold e)_{\bold e\in \bold E}$, the following Airy operator
\begin{equation}\label{Air}
A_0:  (u_ \bold e)_{\bold e\in \bold E}\mapsto \Big (\alpha_ \bold e \frac{d^3}{dx^3}u_\bold e + \beta_ \bold e \frac{d}{dx} u_\bold e\Big)_{\bold e\in \bold E}
\end{equation}
as an unbounded operator on a certain Hilbert space, in such a way that the possible extensions $A_{ext}$ induce that the solution of the linear  equation
\begin{equation}\label{Air1}
z_t=A_{ext} z,
\end{equation}
 it is given by a  $C_0$-unitary group, in other words, by the Stone's theorem we need that $A_0$ has skew-self-adjoint extensions $A_{ext}$, on $L^2(\mathcal G)$. Since the Airy operator $A_0$ is of odd order, changing the sign of each constant $\alpha_ \bold e$ it is equivalent to exchange the positive and negative half-lines and so we can choose $\alpha_ \bold e>0$ for every $\bold e\in \bold E$ without loss of generality. The following proposition from Mugnolo, Noja and Seifert \cite{MNS} give us an answer about the problem associated to \eqref{Air1}.
 
 \begin{proposition} \label{ext} Let $\mathcal G$ be a star graph determined by $
\bold E\equiv \bold E_{-}\cup \bold E_{+}$ and let  $(\alpha_ \bold e)_{\bold e\in \bold E}$, $(\beta_ \bold e)_{\bold e\in \bold E}$ be two sequences of real numbers with $\alpha_ \bold e>0$ for all $\bold e \in \bold E$.  Consider the operator $A_0$ defined in \eqref{Air} with
$$
D(A_0)\equiv \bigoplus\limits_{\bold e\in  \bold E_{-} }C_c ^{\infty}(-\infty, 0)  \oplus \bigoplus\limits_{\bold e\in  \bold E_{+} }C_c ^{\infty}(0, +\infty).
 $$
Then, $iA_0$ is a densely defined  symmetric operator on  the Hilbert space
$$
 L^2(\mathcal{G})=\bigoplus\limits_{\bold e\in  \bold E_{-} }L^2(-\infty, 0)  \oplus \bigoplus\limits_{\bold e\in  \bold E_{+} }L^2(0, +\infty),
$$
with deficiency indices $(n_+(iA_0), n_-(iA_0))= (2| \bold E_{-}|+| \bold E_{+}|, | \bold E_{-}|+2| \bold E_{+}|)$. Therefore, $A_0$ has skew-self-adjoint extension on $ L^2(\mathcal{G})$ if and only if $| \bold E_{-}|=| \bold E_{+}|$.
 \end{proposition}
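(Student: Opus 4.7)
The plan is to follow the standard von~Neumann/Krein recipe: verify that $iA_0$ is densely defined and symmetric, identify its adjoint as a maximal differential operator, and then count the $L^2$-solutions of the deficiency equations $A_0^\ast u = \mp u$ edge by edge using the resulting cubic characteristic equation.

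First, density of $D(A_0) = \bigoplus_{\bold e}C_c^\infty$ in $L^2(\mathcal{G})$ is immediate. Skew-symmetry of $A_0$ is a textbook computation: on each edge $\bold e$, integrating by parts three times in the $\alpha_{\bold e}u'''$ term and once in the $\beta_{\bold e}u'$ term is lawful because compact support kills all boundary terms, hence $\langle A_0 u,v\rangle = -\langle u,A_0 v\rangle$ and $iA_0$ is symmetric. Next I would identify the adjoint: a standard maximal-domain argument (test against all $v\in C_c^\infty$ supported in a single edge and invoke the distributional integration-by-parts formula) shows that $D(A_0^\ast)$ consists of those $u\in L^2(\mathcal{G})$ with edge-restrictions $u_{\bold e}\in H^3$ locally up to the vertex and $-\alpha_{\bold e}u_{\bold e}'''-\beta_{\bold e}u_{\bold e}'\in L^2$, on which $A_0^\ast$ acts as the formal adjoint $-\alpha_{\bold e}d^3/dx^3-\beta_{\bold e}d/dx$ edge-wise. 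No vertex coupling enters at this step, so $A_0^\ast$ decouples across the edges.

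The deficiency indices are then $n_\pm(iA_0)=\dim\ker((iA_0)^\ast \mp iI)=\dim\ker(A_0^\ast \pm I)$, which on each edge reduces to the linear ODE $\alpha_{\bold e}u'''+\beta_{\bold e}u'\mp u = 0$ with $u\in L^2$ at the appropriate infinity. The characteristic polynomial on each edge is
\begin{equation*}
p_{\bold e,\epsilon}(\lambda) \;=\; \alpha_{\bold e}\lambda^3+\beta_{\bold e}\lambda+\epsilon, \qquad \epsilon\in\{+1,-1\}.
\end{equation*}
The root-counting is the technical crux, and I would handle it uniformly in $\beta_{\bold e}\in\mathbb{R}$. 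Observe first that $p_{\bold e,\epsilon}(ir)=\epsilon+ir(\beta_{\bold e}-\alpha_{\bold e}r^2)$, whose real part is the nonzero constant $\epsilon$, so there are no purely imaginary roots. Since $\alpha_{\bold e}>0$, $p_{\bold e,+1}$ is negative at $-\infty$ and positive at $0$, giving at least one real negative root $\lambda_\ast$; Vieta then forces the other two roots (real or complex conjugate) to have sum $-\lambda_\ast>0$ and product $-1/(\alpha_{\bold e}\lambda_\ast)>0$, so both lie in the open right half-plane. Hence $p_{\bold e,+1}$ has exactly one root with $\re\lambda<0$ and two with $\re\lambda>0$, while $p_{\bold e,-1}(\lambda)=-p_{\bold e,+1}(-\lambda)$ inherits the opposite count.

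Finally, an $L^2(0,+\infty)$ solution requires $\re\lambda<0$ and an $L^2(-\infty,0)$ solution requires $\re\lambda>0$; summing the edge-wise dimensions over $\bold E_-\cup \bold E_+$ produces $\dim\ker(A_0^\ast+I)$ and $\dim\ker(A_0^\ast-I)$ as the asserted linear combinations of $|\bold E_-|$ and $|\bold E_+|$. The existence of a skew-self-adjoint extension of $A_0$ is equivalent to the existence of a self-adjoint extension of $iA_0$, which by the von Neumann criterion holds iff $n_+(iA_0)=n_-(iA_0)$. The two computed indices are equal iff $|\bold E_-|=|\bold E_+|$, yielding the claim. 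The main obstacle is the root-counting in step four: once the absence of purely imaginary roots and the Vieta sign analysis are in hand, the rest is bookkeeping, but if one tried to enumerate cases by the sign and size of $\beta_{\bold e}$ (discriminant positive, zero, negative) the argument would balloon unnecessarily, so I would insist on the uniform Vieta route.
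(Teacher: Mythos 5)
The paper itself offers no proof of this proposition --- it is imported verbatim from Mugnolo--Noja--Seifert \cite{MNS} --- so there is no internal argument to compare against; your proof is the standard von~Neumann computation and, as far as I can tell, essentially what is done in \cite{MNS}. All the substantive steps are sound: density and symmetry of $iA_0$, the decoupled maximal-domain description of $A_0^*$, the absence of purely imaginary roots of $\alpha_{\bold e}\lambda^3+\beta_{\bold e}\lambda+\epsilon$, the Vieta sign analysis giving exactly one root of $p_{\bold e,+1}$ in the open left half-plane and two in the right (counted with multiplicity, which also covers the degenerate double-root case you do not mention explicitly), the reflection identity $p_{\bold e,-1}(\lambda)=-p_{\bold e,+1}(-\lambda)$, and the reduction of ``$A_0$ has a skew-self-adjoint extension'' to $n_+(iA_0)=n_-(iA_0)$. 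The one place you wave your hands --- ``the asserted linear combinations'' --- hides a genuine labelling subtlety: with the paper's own convention $n_\pm(A)=\dim \ker(A^*\mp iI)$ and $A_0^*=-\alpha_{\bold e}\frac{d^3}{dx^3}-\beta_{\bold e}\frac{d}{dx}$, the equation for $n_+(iA_0)$ is $\alpha_{\bold e}u'''+\beta_{\bold e}u'-u=0$, whose $L^2$ solutions number two per edge of $\bold E_+$ (roots with $\re\lambda<0$) and one per edge of $\bold E_-$, so one actually gets $n_+(iA_0)=|\bold E_-|+2|\bold E_+|$ and $n_-(iA_0)=2|\bold E_-|+|\bold E_+|$, i.e.\ the two indices swapped relative to the statement. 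This is harmless --- only their equality matters, and it presumably reflects a different sign convention in \cite{MNS} --- but it is exactly the kind of bookkeeping your final sentence should carry out explicitly rather than assert.
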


For $| \bold E_{-}|=| \bold E_{+}|$, {\it{i.e.}} the number of incoming half-lines is the same of outgoing half-lines, the graph $\mathcal G$ is called {\it balanced}.

Some comments about the former proposition deserve to be made which will be very useful in our study.
\begin{remark}\label{extension}
 From Proposition \ref{ext} and  from the classical Krein-von Neumann  extension theory for symmetric operators (see Chapter 4 in Naimark \cite{Nai67} and Theorem X.2 in Reed and Simon \cite{RS}) the operator $(A_0, D(A_0))$, on the case of balanced star graphs, admits a $9| \bold E_{+}|^2$-parameter family of skew-self-adjoint extension generating each one a unitary dynamics on $L^2(\mathcal{G})$ associated to the linear evolution equation \eqref{Air1}. Moreover, every skew-self-adjoint extension $(A, D(A))$  is obtained as a restriction of $(-A^*_0, D(A^*_0))$ with  $-A_0^*=A_0$ and
\begin{equation}\label{domain}
D(A_0^*)\equiv \bigoplus\limits_{\bold e\in  \bold E_{-} }H^3(-\infty, 0)  \oplus \bigoplus\limits_{\bold e\in  \bold E_{+} }H^3(0, +\infty).
\end{equation}
Moreover, we can see the action of $A_0$ as being a matrix-diagonal operator 
$$
A_0=\text{diag} \Big(\Big (\alpha_ \bold e \frac{d^3}{dx^3}u_\bold e + \beta_ \bold e \frac{d}{dx} u_\bold e\Big)\delta_{ij}\Big),\quad 1\leqq i,j\leqq | \bold E_{+}|+ | \bold E_{-}|. 
$$
\end{remark}

We empathize that, the complete characterization of all skew-self-adjoint extensions of $(A_0, D(A_0))$ is a bit complex and one strategy for finding these was obtained very recently by Mugnolo, Noja and Seifert in \cite{MNS} via Krein spaces (see also Schubert, Seifert, Voigt and Waurick in \cite{SSVW}). The central idea of the process is given in Theorem 3.7 and Theorem 3.8 in \cite{MNS} where skew-self-adjoint extensions are parametrized through relations between boundary values.  Here, we will use this approach and  for convenience of the reader we briefly explain this one for a balanced star graph $\mathcal{G}$. For abbreviating our notations, for $u=(u_ \bold e)_{\bold e\in \bold E}\in D(A_0^*)$ in \eqref{domain} we denote 
$$
u(0-)\equiv(u_ \bold e(0-))_{\bold e\in \bold E_{-}},\;\;\text{and}\;\; u(0+)\equiv(u_ \bold e(0+))_{\bold e\in \bold E_{+}}
$$
and  we consider the space of boundary values in $\mathbb C^{3n}$,  with $n=|\bold E_{\pm}|$,
$$
(u(0-), u'(0-), u''(0-)),\;\;and\;\;(u(0+), u'(0+), u''(0+)),
$$
 spanning respectively subspaces $\mathbb G_{-}$ and $\mathbb G_{+}$ in $\mathbb C^{3n}$. Next, the boundary form of the operator $A_0$ is easily seen  for $u, v \in D(A_0^*)$ to be (where we are identifying a vector   with its transpose)
\begin{equation}\label{domain3}
\begin{split}
\langle A_0^*u, v\rangle+&\langle u, A_0^*v\rangle\\
&=
\left(B_{-}\left(\begin{array}{c} u(0-)\\u'(0-) \\u''(0-)\end{array}\right), \left(\begin{array}{c} v(0-)\\v'(0-) \\v'(0-)\end{array}\right)\right)_{\mathbb G_{-}}- \left(B_{+}\left(\begin{array}{c} u(0+)\\u'(0+) \\u''(0+)\end{array}\right), \left(\begin{array}{c} v(0+)\\v'(0+) \\v'(0+)\end{array}\right)\right)_{\mathbb G_{+}} 
\end{split}
\end{equation}
where for $I=I_{n\times n}$ representing the identity matrix of order $n\times n$, we have
\begin{equation}\label{domain4}
B_{-}=\left(\begin{array}{ccc}-  I\beta_{-} & 0 & -I\alpha_{-}\\0 &  I \alpha_{-}& 0 \\- I\alpha_{-}& 0 & 0\end{array}\right),\quad
B_{+}=\left(\begin{array}{ccc}-I\beta_{+} & 0 & -I\alpha_{+} \\0 & I\alpha_{+} & 0 \\- I \alpha_{+} & 0 & 0\end{array}\right)
\end{equation}
and $\alpha_{\pm} =(\alpha_ \bold e)_{\bold e\in \bold E_{\pm}}$, $\beta_{\pm} =(\beta_ \bold e)_{\bold e\in \bold E_{\pm}}$. Thus, by considering the (indefinite) inner product $\langle \cdot | \cdot\rangle_{\pm}: \mathbb G_{\pm}\times \mathbb G_{\pm}\to \mathbb C$ by
$$
\langle x | y\rangle_{\pm}\equiv ( B_{\pm} x , y)_{\mathbb G_{\pm}},\qquad x, y\in \mathbb G_{\pm}
$$ 
we obtain that $(\mathbb G_{\pm}, \langle \cdot | \cdot\rangle_{\pm})$ are Krein spaces and $\langle \cdot | \cdot\rangle_{\pm}$ is non-degenerate (for $x\in \mathbb G_{\pm}$ with $\langle x | x\rangle_{\pm}=0$ follows $x=0$). Thus, from Theorem 3.8 in \cite{MNS} we have that for a linear operator $L: \mathbb G_{-}\to \mathbb G_{+}$, the operator $(A_L, D(A_L))$ defined by  
\begin{equation}\label{domain5}
\left\{ \begin{array}{ll}
&A_Lu=-A_0^*u=A_0u, \\
& D(A_L)=\Big\{u\in  D(A_0^*): L(u(0-), u'(0-), u''(0-))=(u(0+), u'(0+), u''(0+))\Big \} \\
  \end{array}  \right.
\end{equation}
it is a skew-self-adjoint extension of $(A_0, D(A_0))$ if and only if $L$ is $(\mathbb G_{-}, \mathbb G_{+})$-unitary, namely, 
\begin{equation}\label{domain6}
\langle Lx | Ly\rangle_+= ( B_{+} Lx , Ly)_{\mathbb G_{+}}= \langle x | y\rangle_{-}=( B_{-} x , y)_{\mathbb G_{-}},
\end{equation}
in other words, $L^*B_{+} L=B_{-}$. Indeed, 
for $u, v \in D(A_L)$  we get  from \eqref{domain3}  
\begin{equation*}
\begin{aligned}
&\langle -A_Lu, v\rangle+\langle u, -A_Lv\rangle=\langle A_0^*u, v\rangle+\langle u, A_0^*v\rangle\\
&=\langle u(0-)| v(0-)\rangle_{-}-\langle u(0+)| v(+)\rangle_{+}=\langle u(0-)| v(0-)\rangle_{-}-\langle Lu(0-)| Lv(-)\rangle_{+}.
\end{aligned}
\end{equation*}
Then, $(A_L)^*=-A_L$ if and only $L$ is $(\mathbb G_{-}, \mathbb G_{+})$-unitary.

Next, we consider the following family of skew-self-adjoint extension of $(A_0, D(A_0))$ in the case of two half-lines with a singular $\delta$-type interaction at the origin. Since  $|\bold E_{-}|=|\bold E_{+}|=1$ follows that $(A_0, D(A_0))$ admit a nine-parameter family of skew-self-adjoint extensions. Moreover, for  $u=(u_{-}, u_{+})\in H^3(-\infty, 0)  \oplus H^3(0,+\infty)$  we have that the subspaces $\mathbb G_{-}$ and $\mathbb G_{+}$ are given by the triplets $(u_{-}(0-), u'_{-}(0-), u_{-}''(0-))\subset  \mathbb C^3$ and $(u_{+}(0+), u_{+}'(0+), u''_{+}(0+))\subset \mathbb C^3$. Thus we have the following result.

\begin{proposition} \label{L} For $Z\in \mathbb R\setminus\{0\}$ we  define the linear operator $L_Z: \mathbb G_{-}\to \mathbb G_{+}$ by
\begin{equation}\label{domain7}
L_Z=\left(\begin{array}{ccc}1 & 0 & 0 \\Z & 1 & 0 \\ \frac{Z^2}{2} & Z & 1\end{array}\right).
\end{equation}
Then we obtain a family $(A_Z, D(A_Z))$ of skew-self-adjoint extensions of $(A_0, D(A_0))$ parametrized by $Z$ and which  are defined by
\begin{equation}\label{domain8}
\left\{ \begin{array}{ll}
                       &A_Zu= A_0u,\\
                       \\
  & D(A_Z) = \{u=(u_{-},  u_{+})\in H^3(-\infty, 0)  \oplus H^3(0,+\infty): u_{-}(0-)=u_{+}(0+),\\
  \\
  &\hskip0.7in u'_{+}(0+)- u'_{-}(0-)=Zu_{-}(0-),\; \frac{Z^2}{2}u_{-}(0-)+Zu'_{-}(0-)=u''_{+}(0+)-u''_{-}(0-)\}.
  \end{array}  \right.
\end{equation}
Moreover,  for $\alpha_ \bold e=(\alpha_{-}, \alpha_{+})\in \mathbb R^+\times \mathbb R^+ $ and $\beta_ \bold e =(\beta_{-}, \beta_{+})\in \mathbb R\times \mathbb R$ we need to have $\alpha_{-}= \alpha_{+}$ and $\beta_{-}=\beta_{+}$. We obtain that each element in $D(A_Z)$ can be seen as an element in $H^1(\mathbb R)$.
\end{proposition}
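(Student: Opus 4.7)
The plan is to invoke the abstract characterization stated right before the proposition: by \eqref{domain5}--\eqref{domain6}, $(A_{L_Z}, D(A_{L_Z}))$ is a skew-self-adjoint extension of $(A_0,D(A_0))$ if and only if $L_Z$ satisfies the Krein-unitarity identity $L_Z^{*}B_{+}L_Z=B_{-}$. Hence the entire argument reduces to (i) a direct matrix verification of this identity, (ii) reading off the explicit boundary conditions from the action of $L_Z$ on the triple $(u(0-),u'(0-),u''(0-))$, and (iii) a soft regularity argument to conclude $D(A_Z)\hookrightarrow H^{1}(\mathbb{R})$.

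For step (i), since $|\bold E_\pm|=1$ the matrices $B_\pm$ in \eqref{domain4} are $3\times3$ with scalar entries $\alpha_\pm,\beta_\pm$, and $L_Z$ from \eqref{domain7} is a real lower-triangular matrix, so $L_Z^{*}$ is simply its transpose. I would compute $B_{+}L_Z$ first and then left-multiply by $L_Z^{*}$; the resulting $(1,1)$-entry produces $-\beta_+ -\tfrac{\alpha_+Z^{2}}{2}+\alpha_+Z^{2}-\tfrac{\alpha_+Z^{2}}{2}=-\beta_+$, while the cross-terms cancel to give
\begin{equation*}
L_Z^{*}B_{+}L_Z=\begin{pmatrix}-\beta_{+} & 0 & -\alpha_{+}\\ 0 & \alpha_{+} & 0\\ -\alpha_{+} & 0 & 0\end{pmatrix}.
\end{equation*}
Comparing with $B_-$ from \eqref{domain4}, equality forces $\alpha_{-}=\alpha_{+}$ and $\beta_{-}=\beta_{+}$, which proves both the $(\mathbb G_-,\mathbb G_+)$-unitarity of $L_Z$ and the stated compatibility between the edge coefficients. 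This cancellation pattern is really the heart of the proposition, and the specific lower-triangular form of $L_Z$ with the entries $1, Z, Z^{2}/2$ is exactly what makes the off-diagonal $\alpha_+Z^{2}$ cross-term compensate the quadratic piece of the $(1,1)$-slot.

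For step (ii), I would just write out $L_Z(u(0-),u'(0-),u''(0-))^{\top}=(u(0+),u'(0+),u''(0+))^{\top}$ line by line: the first row gives continuity $u_{-}(0-)=u_{+}(0+)$; the second gives the jump $u_{+}'(0+)-u_{-}'(0-)=Z\,u_{-}(0-)$ (i.e.\ the $\delta$-type interaction on the derivative with strength $Z$); and the third rearranges to $u_{+}''(0+)-u_{-}''(0-)=\tfrac{Z^{2}}{2}u_{-}(0-)+Zu_{-}'(0-)$, exactly as in \eqref{domain8}. Together with $A_Zu=A_0u$ on the restricted domain, this identifies $(A_Z,D(A_Z))$ with $(A_{L_Z},D(A_{L_Z}))$.

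For step (iii), every $u=(u_{-},u_{+})\in D(A_Z)$ lies in $H^{3}(-\infty,0)\oplus H^{3}(0,+\infty)$, hence the traces at the origin are well defined, and the first boundary condition $u_{-}(0-)=u_{+}(0+)$ means the function $\widetilde u:=u_{-}\chi_{(-\infty,0)}+u_{+}\chi_{(0,+\infty)}$ is continuous across $x=0$. A standard gluing lemma (a piecewise $H^{1}$ function on $\mathbb R$ whose traces match at the junction belongs to $H^{1}(\mathbb R)$, with its distributional derivative being the piecewise classical derivative) then places $\widetilde u$ in $H^{1}(\mathbb R)$; note that $\widetilde u\notin H^{2}(\mathbb R)$ in general because the second boundary condition allows a jump of $u'$ of size $Zu_{-}(0-)$. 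I do not anticipate a real obstacle here; the only delicate point is the matrix computation in step (i), where one must keep track of the $Z^{2}$-terms carefully, and the realization that the proposition is really telling us that the $\delta$-type Krein parametrization exists \emph{only} in the homogeneous case $\alpha_{-}=\alpha_{+}$, $\beta_{-}=\beta_{+}$.
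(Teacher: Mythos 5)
Your proposal is correct and follows essentially the same route as the paper: invoke the Krein-space criterion \eqref{domain5}--\eqref{domain6} from the Mugnolo--Noja--Seifert framework, verify $L_Z^{*}B_{+}L_Z=B_{-}$ (which forces $\alpha_{-}=\alpha_{+}$, $\beta_{-}=\beta_{+}$), and read off the boundary conditions row by row; your explicit matrix computation and the $H^{1}$-gluing remark simply fill in details the paper leaves implicit.
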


\begin{proof} From the extension theory framework established above, we see from \eqref{domain6}  that $L^*B_{+}L=B_{-}$ if and only if $\alpha_{-}=\alpha_{+}$ and $\beta_{-}=\beta_{+}$. Then the operator $A_Zu\equiv A_0 u$, $Z\in \mathbb R\setminus\{0\}$, defined for $u=(u_{-},  u_{+})$ such that 
$$
L_Z(u_{-}(0-), u'_{-}(0-), u_{-}''(0-))=(u_{+}(0+), u_{+}'(0+), u''_{+}(0+))
$$
 will represent a skew-self-adjoint extension family of $(A_0, D(A_0))$. This finishes the proof.
\end{proof}

\section{Stationary solutions in the case of two half-lines} 

In this section, we show the existence of  stationary solutions for the KdV model \eqref{kdv3}
on a star graph $\mathcal G$ represented by $ \bold E=(-\infty, 0)\cup (0,\infty)$ attaching at a common vertex $\nu=0$. As we will see, the  possibility of different profiles (modulo sign, etc.) is very varied. The choice of which profile may be viable for a possible study of its stability properties by the KdV flow depends on which domain of a skew-self-adjoint extension of $A_0$ this profile may come to belong. 

Thus, we consider the following stationary profile for the KdV model \eqref{kdv3},
\begin{equation}\label{kdv4a}
 (u_ \bold e(x,t))_{\bold e\in \bold E}=(\phi_ \bold e(x))_{\bold e\in \bold E}=(\phi_{-}(x), \phi_{+}(x))\in  H^3(-\infty, 0)  \oplus H^3(0,+\infty) 
\end{equation}
for $t\in \mathbb R$. Then, by substituting this profile  in \eqref{kdv3} and integrating once we obtain the following nonlinear system of elliptic equations
\begin{equation}\label{soli1}
\left\{ \begin{array}{ll}
                       & \alpha_{+}\phi''_{+}(x)+\beta_{+} \phi_{+}(x)+ \phi^2_{+}(x)=0,\quad x>0\\
                       \\
                       & \alpha_{-}\phi''_{-}(x)+\beta_{-} \phi_{-}(x)+ \phi^2_{-}(x)=0,\quad x<0.
  \end{array}  \right.
\end{equation}
Next, it is well know that the equation $a\psi''(x)+b \psi(x) +\psi^2(x)=0$ for all $x\in \mathbb R$ has nontrivial solutions for $\psi(\pm \infty)=0$ in the cases  either $a>0$ and $b<0$ or $a<0$ and $b>0$.  The first case represents the classical positive-soliton for the KdV model \eqref{kdv3}
\begin{equation}\label{soli2}
\psi(x)=-\frac{3b}{2} sech^2\Big(\frac12\sqrt{-\frac{b}{a}}\; x+p\Big), \;\;x\in \mathbb R, \quad p\in \mathbb R.
\end{equation}
The second case produces a depression soliton ($-\psi$ modulo translation). Next, we establish some specific profiles for $\phi_{\pm}$ (the first one will be the focus of our stability study here):

\begin{enumerate}
\item[1)] for $\alpha_{\pm}>0$ and $0>\beta_{\pm}$ or $\alpha_{\pm}<0$ and $0<\beta_{\pm}$:
\begin{equation}\label{soli3}
                       \phi_{\pm}(x)=-\frac{3}{2}\beta_{\pm} sech^2\Big(\frac12\sqrt{-\frac{\beta_{\pm}}{\alpha_{\pm}}}\; x+p_{\pm}\Big)
                       ,\end{equation}
the shift parameters $p_{\pm}$ depend  on boundary conditions for $\phi_{\pm}$ in the vertex-graph $\nu=0$.



\item[2)] For $\alpha_{+}<0$, $0<\beta_{+}$, and, $\alpha_{-}>0$, $0>\beta_{-}$: we obtain a combination of positive and negative non-continuous soliton profiles
\begin{equation}\label{soli5}
\left\{ \begin{array}{ll}
                       &\phi_{+}(x)=-\frac{3}{2}\beta_{+} sech^2\Big(\frac12\sqrt{-\frac{\beta_{+}}{\alpha_{+}}}\; x+p_{+}\Big), \;\; x>0,\\
 &\phi_{-}(x)=-\frac{3}{2}\beta_{-}sech^2\Big(\frac12\sqrt{-\frac{\beta_{-}}{\alpha_{-}}}\; x+p_{-}\Big), \;\; x<0.                      
                      \end{array}  \right.
                        \end{equation}

\end{enumerate}

Now, it which of the different profiles given above or other ones may be plausible to be studied by the dynamic of the 
 KdV equation \eqref{kdv3} on a specific graph will depend heavily on  the boundary conditions at the vertex $\nu=0$.
The following subsection give us an example of this situation, and also we show the rich variety of stationary profiles that may emerge from the KdV model on metric star graphs.

\subsection{Existence of stationary  solutions for a $\delta$-type interaction on two half-lines}

Our first example of solutions for \eqref{kdv3} will belong to the family of   skew-self-adjoint extension of $A_0$ via the operator $L_Z$ in \eqref{domain7}. Thus, from Proposition \ref{L} we obtain that for $\phi=(\phi_{-}, \phi_{+})$ in  $D(A_Z)$,  we need to have $\alpha_{+}=\alpha_{-}>0$, $\beta_{+}=\beta_{-}<0$, and so  the profiles  $\phi_{\pm}$  satisfy the same equation in \eqref{soli1} and from \eqref{soli3} for  $-\frac{\beta_{+}}{\alpha_{+}}>\frac{Z^2}{4}$ we obtain
\begin{equation}\label{soli6}
\phi_{+}(x)=-\frac{3\beta_{+}}{2} sech^2\Big(\frac{\sqrt{-\beta_{+}}}{2 \sqrt{\alpha_{+}}}\; x-tanh^{-1}\Big(\frac{Z \sqrt{\alpha_{+}}}{2\sqrt{-\beta_{+}}}\Big)\Big),\;\;x>0
\end{equation}
and $\phi_{-}(x)\equiv \phi_{+}(-x)$ for $x<0$. Since $\phi_{-}(0-)=\phi_{+}(0+)$ (continuity in zero), we note the condition
 \begin{equation}\label{condition}
 \phi''_{+}(0+)-\phi''_{-}(0-)=\frac{Z^2}{2}\phi_{-}(0-)+Z\phi'_{-}(0-)\end{equation} in \eqref{domain8}  is satisfied immediately. Figures 2-3 below show the profiles of $\phi_{\pm}$ for $Z\neq 0$. For $Z<0$, the so-called {\it tail} profile on the all line, and for $Z>0$, the so-called {\it bump} profile on the all line. Moreover, it is not difficult to show that the only stationary solutions (modulo sign) in $D(A_Z)$   from the KdV model \eqref{kdv3} are exactly the tail and bump profiles defined by formula \eqref{soli6}.
\begin{figure}[h]
\centering
\begin{minipage}[b]{0.34\linewidth}\label{figng22}
\includegraphics[angle=0,scale=0.5]{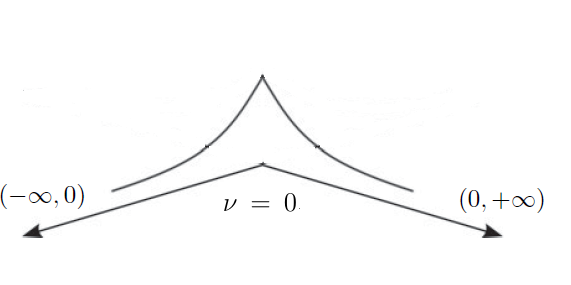}
\caption{$(\phi_{-}, \phi_{+})$ for $Z<0$}
\end{minipage}
\begin{minipage}[b]{0.34\linewidth}\label{fig33}
\includegraphics[angle=0,scale=0.5]{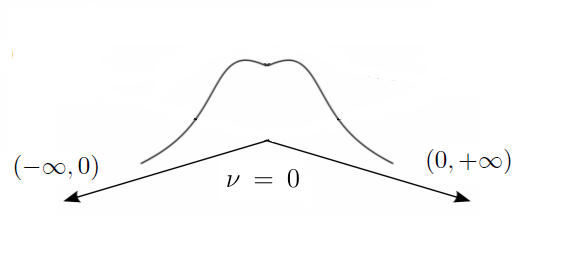}
\caption{$(\phi_{-}, \phi_{+})$ for $Z>0$}
\end{minipage}
\end{figure}

\section{Linear instability criterium for KdV on a start graph} 

 In this section, we establish a novel linear instability criterium of stationary solutions for the KdV model \eqref{kdv3} on a  start graph $\mathcal G$ with $|\bold E_{+}|=n$ and $|\bold E_{-}|=m$.  Thus, we will consider a extension $(A_{ext}, D(A_{ext}))$ of the Airy operator $A_0$ in \eqref{Air2} on $L^2(\mathcal G)$ such that the dynamic induced by the linear evolution problem \eqref{evolu} is given by a $C_0$-group (see \cite{MNS}).

 Suppose for $(\phi_{\bold e})_{\bold e\in \bold E}\in D(A_{ext})$ we have  that $(\tilde{u}_{\bold e}(x,t))_{\bold e\in \bold E}=(\phi_{\bold e}(x))_{\bold e\in \bold E}$ is a nontrivial  solution of \eqref{kdv3}, thus we obtain the following set  of $|\bold E_{-}|+|\bold E_{+}|$ equations
 
\begin{equation}\label{stat1}
\alpha_ \bold e \frac{d^3}{dx^3} \phi_{\bold e} + \beta_ \bold e \frac{d}{dx} \phi_{\bold e}+ 2\phi_{\bold e}\phi'_{\bold e}=0,\quad \bold e\in \bold E=\bold E_{-}\cup \bold E_{+}.
 \end{equation}
Then, since $\phi_{\bold e} (\pm \infty)=0$ we obtain for  $\bold e\in \bold E$ that each component satisfies the elliptic equation
 \begin{equation}\label{stat2}
\alpha_ \bold e \frac{d^2}{dx^2} \phi_{\bold e} + \beta_ \bold e \phi_{\bold e}+ \phi^2_{\bold e}=0.
 \end{equation}

 Next, we suppose  for $\bold e\in \bold E$, that $u_{\bold e}$ satisfies formally  equality in \eqref{kdv3} and we define
  \begin{equation}\label{stat3}
 v_{\bold e}(x,t)\equiv u_{\bold e}(x,t) -\phi_{\bold e}(x).
 \end{equation}
Then, for $( v_{\bold e})_{\bold e\in \bold E}\in D(A_{ext})$ we have for each $\bold e\in \bold E$
the equation 
 \begin{equation}\label{stat4}
\partial_t  v_{\bold e}= \alpha_ \bold e \partial_x ^3v_\bold e + \beta_ \bold e\partial_x v_\bold e + 2 \partial_x (v_\bold e \phi_\bold e) +  \partial_x (v^2_\bold e).
\end{equation}
Thus, we have that the system  (abusing the notation)
 \begin{equation}\label{stat5}
\partial_t  v_{\bold e}(x,t)= \alpha_ \bold e \partial_x ^3v_\bold e(x,t) + \beta_ \bold e \partial_x v_\bold e(x,t) + 2 \partial_x (v_\bold e(x,t) \phi_\bold e(x)), 
\end{equation}
represents the linearized equation for \eqref{kdv3} around $\phi_{\bold e}$. Our objective in the following will be to give sufficient conditions for obtaining that the trivial solution $v_{\bold e}\equiv 0$, $\bold e\in \bold E$, it is unstable by the linear flow of \eqref{stat5}. More exactly, we are interested in finding a {\it growing mode solution} of \eqref{stat5} with the form $ v_{\bold e}(x,t)=e^{\lambda t} \psi_{\bold e}$ and $\text{Re} (\lambda) >0$. In other words, we need to solve the formal system for $\bold e\in \bold E$,
 \begin{equation}\label{stat6}
\lambda \psi_{\bold e}=-\partial_x\mathcal L_{\bold e}  \psi_{\bold e},\qquad \mathcal L_{\bold e}=-\alpha_{\bold e}\frac{d^2}{dx^2}-\beta_{\bold e}-2\phi_{\bold e},
\end{equation}
with $\psi=( \psi_{\bold e})_{\bold e\in \bold E}\in D(\partial_x\mathcal L_{\bold e})$.

Next, we write our eigenvalue problem in \eqref{stat6} in an Hamiltonian matrix form. Indeed, for 
 $\psi=(\psi_{-}, \psi_{+})$ with $\psi_{-}=( \psi_{\bold e})_{\bold e\in \bold E_{-}}$ and  $\psi_{+}=( \psi_{\bold e})_{\bold e\in \bold E_{+}}$,  we write \eqref{stat6} as
 \begin{equation}\label{stat8} 
\lambda\left(\begin{array}{c} \psi_{-} \\ \psi_{+}\end{array}\right) =\left(\begin{array}{cc} -\partial_x \mathcal L_{-}& 0 \\0 & -\partial_x \mathcal L_{+}\end{array}\right)\left(\begin{array}{c} \psi_{-}\\ \psi_{+}\end{array}\right)\equiv NE\left(\begin{array}{c} \psi_{-}\\ \psi_{+}\end{array}\right)
 \end{equation}
 with 
 \begin{equation}\label{stat9}  
  \mathcal L_{-}=\text{diag}\Big(-\alpha_{_{1,-}}\frac{d^2}{dx^2}-\beta_{_{1-}}-2\phi_{_{1,-}},..., -\alpha_{_{m,-}}\frac{d^2}{dx^2}-\beta_{_{m, -}} -2\phi_{_{m, -}} \Big),
\end{equation} 
 where $( \alpha_{\bold e})_{\bold e\in \bold E_{-}}\equiv (\alpha_{1,-},..., \alpha_{m, -})$,  $( \beta_{\bold e})_{\bold e\in \bold E_{-}}\equiv (\beta_{1,-},..., \beta_{m,-})$, and $( \phi_{\bold e})_{\bold e\in \bold E_{-}}\equiv (\phi_{1,-},..., \phi_{m, -})$. $ \mathcal L_{+}$  being defined similarly for $( \alpha_{\bold e})_{\bold e\in \bold E_{+}}$, $( \beta_{\bold e})_{\bold e\in \bold E_{+}}$ and $( \phi_{\bold e})_{\bold e\in \bold E_{+}}$. Thus, we have that $N$ and $E$ are $(m+n)\times (m+n) $-diagonal matrix defined by
  \begin{equation}\label{stat10} 
N=\left(\begin{array}{cc} -\partial_xI_m & 0 \\ 0  & -\partial_xI_n \end{array}\right),\quad E=\left(\begin{array}{cc} \mathcal L_{-}& 0 \\0 & \mathcal L_{+}\end{array}\right),
 \end{equation} 
 where $I_k$ denotes the identity matrix of order $k$.
 
 If we denote by $\sigma(NE)=\sigma_p(NE)\cup \sigma_{ess}(NE)$ the spectrum  of $NE$ (namely, $\lambda \in \sigma_p(NE)$ if $\lambda$ is isolated and there is a $\psi\neq 0$ satisfying $ NE\psi =\lambda \psi$), the later discussion
suggests the utility of the following definition:

 \begin{definition}
The stationary vector solution $(\phi_{\bold e})_{\bold e\in \bold E}\in D(A_{ext})$    is said to be \textit{spectrally stable} for model \eqref{kdv3} if the spectrum of $NE$, $\sigma(NE)$, satisfies $\sigma(NE)\subset i\mathbb{R}.$
Otherwise, the stationary solution $(\phi_{\bold e})_{\bold e\in \bold E} $ is said to be \textit{spectrally unstable}.
\end{definition}

It is standard to show that $\sigma(NE)$ is symmetric with respect  to both the real and imaginary axes and $ \sigma_{ess}(NE)\subset i\mathbb{R}$ by supposing $N$ skew-symmetric and $E$ self-adjoint   (see, for instance, \cite[Lemma 5.6 and Theorem 5.8]{GrilSha90}). These cases on $N$ and  $E$ will be considered in our theory. Hence  it is equivalent to say that $(\phi_{\bold e})_{\bold e\in \bold E}\in D(A_{ext})$ is  \textit{spectrally stable} if $\sigma_p(NE)\subset i\mathbb{R}$, and it is spectrally unstable if $\sigma_p(NE)$ contains point $\lambda$ with  $\text{Re} (\lambda)>0.$

 It is widely known  that the spectral instability of a specific traveling wave solution of an evolution type model is   a key prerequisite to show their nonlinear instability property (see \cite{GrilSha90, Lopes, ShaStr00} and references therein). In a future work, we will study whether our spectral instability results imply nonlinear instability of stationary solutions by the KdV flow. 
 
  \subsection{Linear instability criterium} 
 
 Let $\mathcal G$ be a star graph $\mathcal G$ with a structure represented by the set $
\bold E\equiv \bold E_{-}\cup \bold E_{+}$
where $\bold E_{-}$ and $\bold E_{+}$ are finite or countable collections of semi-infinite edges $\bold e$ parametrized by $(-\infty, 0)$ or $(0, +\infty)$, respectively. The half-lines are connected at a unique vertex $\nu=0$.
 
 From \eqref{stat8}, our eigenvalue problem to solve is reduced to,
  \begin{equation}\label{stat11}    
 NE\psi =\lambda \psi, \quad Re(\lambda)>0,\;\;\psi\in D(E).
 \end{equation}  
 Next, we establish our theoretical framework and assumptions for obtaining a nontrivial solution to  problem in \eqref{stat11}:
 \begin{enumerate}
 \item[$S_1$)] Let $(A_{ext}, D(A_{ext}))$ be a extension of $(A_0, D(A_0))$ such that the solution of the  linearized KdV model \eqref{evolu} is given by a $C_0$-group.
 
  \item[$S_2$)] Suppose $0\neq \phi=(\phi_{\bold e})_{\bold e\in \bold E} \in D(A_{ext})$ such that $(\tilde{u}_{\bold e}(x,t))_{\bold e\in \bold E}=(\phi_{\bold e}(x))_{\bold e\in \bold E}$ is a  stationary solution for the KdV model \eqref{kdv3}. 
 
\item[$S_3$)] Let $E$ be   defined on a domain $D(E)\subset L^2(\mathcal G)$ on which $E$ is self-adjoint and such that $D(A_{ext})\subset D(E)$.

\item[$S_4$)]  Since for every $u\in D(A_{ext})$ we have $Eu\in D(N)$, 
we suppose $\langle NE u, \phi\rangle=0$ for every $u\in D(A_{ext})$.
  
 \item[$S_5$)] Suppose $E:D(E)\to L^2(\mathcal G)$ is  invertible  with Morse index $n(E)$ such that:
 \begin{enumerate}
\item[a)] for  $n(E)=1$, $\sigma(E)=\{\lambda_0\}\cup J_0$ with $J_0\subset [r_0, +\infty)$, for $r_0>0$, and $\lambda_0<0$,

\item[b)] for $n(E)=2$, $\sigma(E)=\{\lambda_1, \lambda_2\}\cup J$ with $J\subset [r, +\infty)$, for $r>0$, and $\lambda_1, \lambda_2<0$. Moreover, for $\Phi_1, \Phi_2, \in D(E)-\{0\}$ with $E\Phi_i=\lambda_i \Phi_i$ ($i=1,2$) we have $\langle N\phi, \Phi_1\rangle \neq 0$ or $\langle N\phi, \Phi_2\rangle \neq 0$.
 
 \end{enumerate}

  \item[$S_6$)]  For $\psi\in D(E)$ with $E\psi=\phi$, we have  $\langle \psi, \phi\rangle \neq 0 $.
 
 \item[$S_7$)] Suppose the operator  $N: D(N)\cap D(E) \to L^2(\mathcal G)$ is a skew-symmetric operator and we have that  $N$ on $D(N)$ is one-to-one.
\end{enumerate} 
 
 We note immediately from \eqref{stat8} (see Remark \ref{extension}) that the following matrix-operator relation
  $$
 NE\psi=A_{ext}\psi+ \text{diag}((2\partial_x(\phi_\bold e\psi))\delta_{ij}),\quad 1\leqq i,j\leqq |\bold E_{-}|+ |\bold E_{+}|,
 $$
implies via assumption $S_1)$, $\phi, \phi'\in L^\infty(\mathcal G)$ and   from semigroup theory (see \cite{Pa}) that the linear Hamiltonian equation
  \begin{equation}\label{hamil}
 \frac{d}{dt}{v}(t)=NE v(t)
\end{equation} 
 generates a  $C_0$-group  $\{S(t)\}_{t\in \mathbb R}$ on $L^2(\mathcal G)$. 
 
 Some of the former assumptions deserve specific comments which will be very useful  in the development of our linear instability theory.
 \begin{remark}
\item[1)]  In contrast to the classical stability theories  for solitary waves solutions on all line,  in the case of a star graph we have in general that $N\phi \notin D(E)$ (see Lemma \ref{ker} below). But from \eqref{stat2} we will have always that (see \eqref{stat9})
 $$
 \mathcal L_{+} \phi'_{+}(x)=0,\;\;\text{for}\;\;x>0,\quad \mathcal L_{-} \phi'_{-}(x)=0\;\;\text{for}\;\;x<0,
 $$
where we are writing $(\phi_{\bold e})_{\bold e\in \bold E}=(\phi_{-}, \phi_{+})$, with $\phi_{-}=(\phi_{\bold e})_{\bold e\in \bold E_{-}}$ and  $\phi_{+}=(\phi_{\bold e})_{\bold e\in \bold E_{+}}$.
 
\item[2)] From Proposition \ref{L} (the case of two half-lines) and $\phi_{\pm}$   being  either the tail or the bump profiles in \eqref{soli3}, we have for
 $\phi=(\phi_{-}, \phi_{+})$ that assumption $S_4)$, $\langle NE u, \phi\rangle=0$ for every $u\in D(A_Z)$, it  is true. Indeed, for $u=(u_{-}, u_{+})\in D(A_Z)$ defined in \eqref{domain8} follows from integration by parts  (without loss of generality we consider $\alpha_{-}=\alpha_{+}=1$ and $\beta_{-}=\beta_{+}=-1$ in \eqref{stat1})
 \begin{equation}\label{S_4}
\begin{array}{ll}
 &\int_{-\infty}^0\partial_x ( \partial^2_xu_{-})\phi_{-} dx +  \int_{0}^{+\infty}\partial_x ( \partial^2_xu_{+})\phi_{+} dx\\
 &= - \int_{-\infty}^0 u_{-} \phi'''_{-} dx- \int_{0}^{\infty} u_{+} \phi'''_{+} dx\\
 &+[ u''_{-}(0-)-u''_{+}(0+)] \phi_{+}(0+) + u'_{+}(0+) \phi'_{+}(0+)- u'_{-}(0-) \phi'_{-}(0-)\\
&=- \int_{-\infty}^0 u_{-} \phi'''_{-} dx- \int_{0}^{\infty} u_{+} \phi'''_{+} dx+ [-\frac{Z^2}{2}u_{-}(0-)-Zu'_{-}(0-)]\phi_{+}(0+) \\
&+Zu'_{-}(0-) \phi_{+}(0+) + Zu_{-}(0-) \phi'_{+}(0+)\\
&= - \int_{-\infty}^0 u_{-} \phi'''_{-} dx- \int_{0}^{\infty} u_{+} \phi'''_{+} dx+ u_{-}(0-)[Z\phi'_{+}(0+)-\frac{Z^2}{2}\phi_{+}(0+)]\\
&=- \int_{-\infty}^0 u_{-} \phi'''_{-} dx- \int_{0}^{\infty} u_{+} \phi'''_{+} dx,
 \end{array}  
\end{equation}
 where in the las equality we use the   ``even-property'' of $(\phi_{-},\phi_{+})$, namely, $\phi'_{+}(0+)=\frac{Z}{2}\phi_{+}(0+)$.
Next, since $u_{-}(0-)= u_{+}(0+)$ and $\phi_{-}(0-)= \phi_{+}(0+)$ we obtain
\begin{equation}\label{S_41}
\begin{array}{ll}
&\int_{-\infty}^0\partial_x (u_{-}-2\phi_{-}u_{-})\phi_{-} dx + \int_{0}^{+\infty}\partial_x (u_{+}-2\phi_{+}u_{+})\phi_{+} dx\\
&=- \int_{-\infty}^0 u_{-}(1-2\phi_{-})\phi'_{-} dx-\int_{0}^{+\infty}u_{+}(1-2\phi_{+})\phi'_{+} dx.
 \end{array} 
 \end{equation}
 Thus from \eqref{stat2},  \eqref{S_4} and \eqref{S_41} we obtain for $u\in D(A_L)$
 \begin{equation}
\begin{array}{ll}
& \langle NE u, \phi\rangle=\langle-\partial_x \mathcal L_{-}u_{-}, \phi_{-} \rangle+\langle-\partial_x \mathcal L_{+}u_{+}, \phi_{+} \rangle\\
 &= \int_{-\infty}^0 u_{-}(-\phi'''_{-} +\phi'_{-} - 2\phi_{-}\phi'_{-}) dx+\int_{0}^{+\infty}u_{+}(-\phi'''_{+}+\phi'_{+}-2 \phi_{+}\phi'_{+})dx=0.
\end{array} 
 \end{equation}

 \item[3)] From Proposition \ref{L}  we see that our assumption $S_3)$ in the case of a 
$\delta$-interaction for two  half-line  is not empty. Indeed, for $E=\text{diag}(\mathcal L_{-}, \mathcal L_{+})$,  with 
$\phi_{\pm}$   being  either the tail or the bump profiles in \eqref{soli3} and  with 
$$
\begin{array}{ll}
D(E)&= \{u=(u_-,u_+)\in H^2(-\infty, 0)  \oplus H^2(0,+\infty): u_{-}(0-)=u_{+}(0+),\\
&\hskip0.4in \;\text{and}\;\; u'_{+}(0+)- u'_{-}(0-)=Zu_{-}(0-)\},
\end{array} 
$$ we have the  self-adjoint property of $E$ and $D(A_Z)\subset D(E)$. Moreover, assumption $S_7)$ is immediately satisfied in this case by continuity.
 \end{remark}

 Next, we give the preliminaries for establishing our instability criterium described in Theorem \ref{crit} below. The main idea in the following is to reduce our eigenvalue problem \eqref{stat11} to  the orthogonal subspace $[\phi]^\bot$ by assumption $S_4)$. Thus we consider the orthogonal projection $Q:L^2(\mathcal G)\to L^2(\mathcal G)$
  \begin{equation}
 Q(u)=u-\langle u, \phi\rangle \frac{\phi}{\|\phi\|^2}
  \end{equation}
 associated to the nontrivial stationary solution $\phi$, and we consider 
 $$
 X_2=Q(L^2(\mathcal G))=\{f\in L^2(\mathcal G): f\bot \phi\}=[\phi]^\bot.
 $$
  We also define the closed skew-adjoint operator $N_0:D(N_0)\subset X_2\to X_2$, $D(N_0)\equiv D(N)\cap X_2$, for $f\in D(N_0)$ by
 \begin{equation}
 N_0f\equiv QN f=Nf-\langle Nf, \phi\rangle \frac{\phi}{\|\phi\|^2}
  \end{equation} 
  and the reduced self-adjoint operator for $E$, $F:D(F) \to X_2$, $D(F)=D(E)\cap X_2$ by
  \begin{equation}
 Ff\equiv QE f=Ef-\langle Ef, \phi\rangle \frac{\phi}{\|\phi\|^2}.
  \end{equation} 
 Now, for $f\in D(NE)\cap X_2=D(A_{ext})\cap X_2$  ($Ef\in D(N)$), from assumptions $S_4)$ and $S_6)$ we get the relation
 \begin{equation}\label{NF}
\begin{array}{ll}
 N_0Ff&=NEf-\langle Ef, \phi\rangle \frac{N\phi}{\|\phi\|^2}-\langle NEf-\langle Ef, \phi\rangle \frac{N\phi}{\|\phi\|^2},\phi \rangle \frac{\phi}{\|\phi\|^2}\\
 &=NEf-\langle Ef, \phi\rangle \frac{N\phi}{\|\phi\|^2}.
  \end{array} 
 \end{equation}
 
 \begin{proposition}\label{group} $N_0F: D(N_0F)\subset X_2\to X_2$, $D(N_0F)=D(A_{ext})\cap X_2\subset D(E)\cap X_2$, it is the infinitesimal generator of a strongly continuous  $C_0$-group of operators $S_0(t)$ in the space $X_2$.
\end{proposition}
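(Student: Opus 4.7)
The plan is to realize $N_0F$ as a bounded perturbation of the restriction of $NE$ to $X_2$ and then invoke the perturbation theorem for $C_0$-groups.

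First, I would record that, by the hypotheses $S_1)$ and $S_2)$ together with $\phi,\phi'\in L^\infty(\mathcal G)$, the operator $NE$ on $D(A_{ext})$ is an additive perturbation of the generator $A_{ext}$ by the bounded multiplication/derivative term $\mathrm{diag}\bigl(2\partial_x(\phi_{\bold e}\,\cdot\,)\bigr)$, so $NE$ is itself the generator of a $C_0$-group $\{S(t)\}_{t\in\mathbb R}$ on $L^2(\mathcal G)$, exactly as asserted in \eqref{hamil}.

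Next, I would show that the group $\{S(t)\}$ leaves the codimension-one subspace $X_2=[\phi]^\perp$ invariant. By assumption $S_4)$, for every $u\in D(A_{ext})$ one has $\langle NEu,\phi\rangle=0$. Since $S(t)u\in D(A_{ext})$ for $u\in D(A_{ext})$, the function $t\mapsto\langle S(t)u,\phi\rangle$ is differentiable with derivative $\langle NE\,S(t)u,\phi\rangle=0$, hence it is constant. By density of $D(A_{ext})$ in $L^2(\mathcal G)$ and strong continuity of $S(t)$, the equality $\langle S(t)u,\phi\rangle=\langle u,\phi\rangle$ extends to all $u\in L^2(\mathcal G)$. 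Consequently $u\in X_2$ implies $S(t)u\in X_2$, and the restriction $\tilde S(t):=S(t)|_{X_2}$ is a $C_0$-group on $X_2$. Its generator is $NE$ restricted to the invariant domain $D(A_{ext})\cap X_2$, i.e.\ $NE|_{X_2}$.

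Now I would compare $NE|_{X_2}$ with $N_0F$ using formula \eqref{NF}. For $f\in D(N_0F)=D(A_{ext})\cap X_2$ one has
\begin{equation*}
N_0Ff \;=\; NEf - \langle Ef,\phi\rangle\,\frac{N\phi}{\|\phi\|^2} \;=\; NE|_{X_2}f \,-\, Kf,
\end{equation*}
where, using the self-adjointness of $E$ (assumption $S_3)$) and $\phi\in D(A_{ext})\subset D(E)$, the operator
\begin{equation*}
Kf \;:=\; \langle f,E\phi\rangle\,\frac{N\phi}{\|\phi\|^2}
\end{equation*}
is well defined on all of $L^2(\mathcal G)$, because $E\phi\in L^2(\mathcal G)$ and $N\phi=(-\phi'_{\bold e})_{\bold e\in\bold E}\in L^2(\mathcal G)$ (as $\phi_{\bold e}\in H^3$). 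Thus $K$ is a bounded rank-one operator on $L^2(\mathcal G)$ with norm $\|K\|\leq \|E\phi\|\,\|N\phi\|/\|\phi\|^2$. Moreover $K$ maps $X_2$ into itself: using the skew-symmetry of $N$ (assumption $S_7)$) on real-valued $\phi$ together with the continuity condition $\phi_{-}(0-)=\phi_{+}(0+)$ encoded in $D(A_{ext})$ (see Remark 1 and Proposition \ref{L}), a direct integration by parts gives
\begin{equation*}
\langle N\phi,\phi\rangle \;=\; -\tfrac12\phi_{-}(0-)^{2}+\tfrac12\phi_{+}(0+)^{2}\;=\;0,
\end{equation*}
so $\langle Kf,\phi\rangle=\langle f,E\phi\rangle\,\langle N\phi,\phi\rangle/\|\phi\|^2=0$ for every $f\in L^2(\mathcal G)$.

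Finally, since $NE|_{X_2}$ generates a $C_0$-group on $X_2$ and $-K|_{X_2}$ is a bounded linear operator on $X_2$, the standard bounded perturbation theorem for $C_0$-groups (see, e.g., Pazy \cite{Pa}) implies that
\begin{equation*}
N_0F \;=\; NE|_{X_2} - K|_{X_2}, \qquad D(N_0F)=D(A_{ext})\cap X_2,
\end{equation*}
is the infinitesimal generator of a $C_0$-group $\{S_0(t)\}_{t\in\mathbb R}$ on $X_2$, with the usual Dyson/Duhamel representation in terms of $\tilde S(t)$ and $K$. The main technical point to watch is the verification that $K$ really takes values in $X_2$; once $\langle N\phi,\phi\rangle=0$ is established from the vertex conditions and the skew-symmetry of $N$, the rest is a routine application of the bounded perturbation theorem.
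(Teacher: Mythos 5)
Your proof is correct, and it rests on the same underlying mechanism as the paper's: a finite-rank bounded perturbation of the $C_0$-group generated by $NE$ in \eqref{hamil}, with the self-adjointness of $E$ (and $\phi\in D(A_{ext})\subset D(E)$) used to rewrite $\langle Ef,\phi\rangle=\langle f,E\phi\rangle$ so that the correction term extends to a bounded operator. The organization, however, is genuinely different. The paper perturbs on the full space first: it introduces $C=QNQEQ=NE-B$ with $B$ bounded of finite rank, obtains a group $S_1(t)$ on all of $L^2(\mathcal G)$, and then projects, using that $C$ commutes with $Q$ to define $S_0(t)=QS_1(t)$ on $X_2$ and asserting that its generator is $N_0F$. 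You restrict first: assumption $S_4)$ makes $t\mapsto\langle S(t)u,\phi\rangle$ constant, so $X_2$ is invariant under the unperturbed group, $NE$ with domain $D(A_{ext})\cap X_2$ generates a group on $X_2$ (the part of $NE$ in $X_2$ is all of $D(A_{ext})\cap X_2$, again by $S_4)$), and \eqref{NF} exhibits $N_0F$ as this generator minus the rank-one operator $K$, which maps $X_2$ into itself because $\langle N\phi,\phi\rangle=0$. Your route buys a smaller (rank-one) correction and avoids having to verify the commutation of $S_1(t)$ with $Q$ and to identify the generator of the projected group; the price is the (easy) invariance lemma for $X_2$. One small suggestion: deduce $\langle N\phi,\phi\rangle=0$ directly from the skew-symmetry in $S_7)$ applied to the real-valued $\phi\in D(N)\cap D(E)$, rather than from the explicit two-half-line vertex computation, since the proposition is stated for a general extension satisfying $S_1)$--$S_7)$; this identity is in any case already implicit in the paper's derivation of \eqref{NF}.
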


\begin{proof} We divide  the proof in two steps:

\begin{enumerate}
\item[a)] Define $C= QNQEQ: D(C)\subset  L^2 (\mathcal G)\to  L^2 (\mathcal G)$, $D(C)= D(A_{ext})$. Then, for $f\in D(A_{ext})$
 \begin{equation}
\begin{array}{ll}
Cf&=NE f- \langle f, \phi\rangle \frac{NE\phi}{\|\phi\|^2}-\langle Ef, \phi\rangle \frac{N\phi}{\|\phi\|^2} +  \langle f, \phi\rangle \frac{\langle E\phi, \phi\rangle}{\|\phi\|^2} \frac{N\phi}{\|\phi\|^2}\\
&= NE f- B f
\end{array} 
 \end{equation}
 where $B:  L^2 (\mathcal G)\to  L^2 (\mathcal G)$ defined by 
 $$
 Bf=\langle f, \phi\rangle \frac{NE\phi}{\|\phi\|^2}+\langle f, E\phi\rangle \frac{N\phi}{\|\phi\|^2} -  \langle f, \phi\rangle \frac{\langle E\phi, \phi\rangle}{\|\phi\|^2} \frac{N\phi}{\|\phi\|^2},
 $$
  it is a bounded operator. Here was used that $E$ is a self-adjoint operator on $D(E)\supseteq D(A_{ext})$. Thus, from the theory of semigroups (see \cite{Pa}) $C$  generates a  strongly continuous $C_0$-group of operators $S_1(t)$ on $ L^2 (\mathcal G)$. Since $C$ commutes with $Q$,  $S_1(t)$ also commutes with $Q$.
 
\item[b)] Define $S_0(t): X_2\to X_2$ by $S_0(t)=QS_1(t)$. Then $S_0$ is a strongly continuous $C_0$-group of linear operators on $ X_2$ and it is not difficult to see that its  infinitesimal generator is $N_0F$.
\end{enumerate}
This finishes the proposition.
\end{proof}

Next, we have the following  basic assumption for our linear instability criterium in the case $n(E)=2$ in Assumption $S_5)$.

\begin{enumerate}
\item[(H)] There is a real number $\eta $, satisfying $\eta>0$, such that  $F: D(F)\to X_2$, $D(F)=D(E)\cap X_2$, it is invertible and with  Morse index equal to one. Moreover,  all the remainder of the spectrum is  contained in $[\eta, +\infty)$.
\end{enumerate}

\begin{theorem}\label{crit}
Suppose the assumptions $S_1)-S_7)$ hold with $n(E)=2$ in Assumption $S_5)$, and the basic assumption $(H)$. Then the operator $NE$ has a real positive and a real negative eigenvalue. 
\end{theorem}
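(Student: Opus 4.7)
My plan is to adapt the instability criterion of Lopes \cite{Lopes} to the star-graph framework. The argument splits naturally into (i) a reduction to the invariant subspace $X_2 = [\phi]^\perp$, (ii) a Pontryagin-space spectral analysis of the reduced operator $N_0F$, and (iii) a lifting of the eigenvalue back to $NE$ on $L^2(\mathcal G)$, followed by a symmetry argument for the negative eigenvalue.

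First, I would reduce the problem to $X_2$. By $S_4$, any eigenvector $\psi$ of $NE$ with $\lambda \neq 0$ automatically satisfies $\langle\psi,\phi\rangle = 0$, i.e.\ $\psi\in X_2$, and the identity \eqref{NF} rewrites the eigenvalue equation as
\[ N_0F\psi = \lambda\psi - \langle E\psi,\phi\rangle\frac{N\phi}{\|\phi\|^2}.\]
By Proposition \ref{group} the reduced operator $T := N_0 F$ generates a $C_0$-group on $X_2$, so its spectrum is symmetric with respect to the real and imaginary axes.

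Second, I would exploit hypothesis (H) to endow $X_2$ with a $\Pi_1$ Pontryagin inner product $[u,v]_F := \langle Fu,v\rangle$, since $F$ is invertible with exactly one negative direction and essential spectrum in $[\eta,+\infty)$. A short calculation using the skew-symmetry of $N_0$ from $S_7$ shows that $T$ is $[\cdot,\cdot]_F$-skew-symmetric, equivalently $iT$ is $F$-self-adjoint. Applying the Krein–Langer spectral theory for self-adjoint operators on a $\Pi_1$ space, the non-real part of $\sigma(iT)$ consists of at most one complex-conjugate pair, of total algebraic multiplicity at most $2$. To force the presence of a real positive eigenvalue of $T$, I would invoke analytic perturbation theory: deform $F$ through an analytic family $F_s$ (with constant Morse index $1$) to a reference operator for which a real positive eigenvalue of $N_0F_s$ can be exhibited directly; the slope condition $S_6$ (a Vakhitov–Kolokolov-type statement $\langle E^{-1}\phi,\phi\rangle \neq 0$) is precisely what prevents the tracked eigenvalue from colliding with the imaginary axis during the deformation.

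Third, I would lift the real positive eigenvalue $\lambda$ of $T$ to $NE$. Given $T\psi = \lambda\psi$, set $\Psi = \psi + c_0\phi$ and impose $NE\Psi = \lambda\Psi$; this yields a scalar equation for $c_0$ whose unique solvability is again guaranteed by $S_6$. The existence of the negative eigenvalue $-\lambda$ is then automatic from the reflection symmetry of $\sigma(NE)$ about the real axis (consequence of $N$ skew and $E$ self-adjoint, as recalled after \eqref{stat11}).

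The main obstacle is the middle step. The Krein–Langer machinery classically is developed for bounded operators, whereas here $T$ is unbounded and its domain is entangled with the vertex boundary conditions, so definitizability and stability of the spectrum under the analytic deformation $F \leadsto F_s$ must be established in this concrete setting. Equally delicate is showing that the conjugate pair produced by the Pontryagin theory is in fact real: Morse index $1$ by itself only guarantees at most one off-axis pair, not that such a pair is real. It is here that $S_6$ and the explicit form of the kernels of $\mathcal L_\pm$ coming from $\mathcal L_\pm \phi'_\pm = 0$ must be combined to pin down the signature of the relevant eigenvalue and conclude that it lies on the real axis with positive sign.
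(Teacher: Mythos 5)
Your overall architecture (reduce to $X_2=[\phi]^\perp$, find a real eigenvalue of $N_0F$, lift back to $NE$, get $-\lambda$ by spectral symmetry) matches the paper's, but the two load-bearing steps are not carried out, and one of them is set up in a way that cannot work. The central gap is the existence mechanism for the unstable eigenvalue of $N_0F$. Hypothesis $(H)$ plus the Pontryagin/Krein--Langer theory for the indefinite form $[u,v]_F=\langle Fu,v\rangle$ only yields an \emph{upper bound}: at most one conjugate pair of eigenvalues off the imaginary axis. It does not produce one. You defer the existence to an unspecified analytic deformation $F\leadsto F_s$ to a ``reference operator for which a real positive eigenvalue can be exhibited directly,'' but no such reference operator is identified, and constructing one is exactly the hard part (for the tail/bump profiles there is no obvious explicitly diagonalizable limit of $N_0F$ with a known unstable mode). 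The paper's proof supplies the missing existence mechanism with a completely different, constructive device: the Krasnoselskii cone theorem. One takes the cone $K=\{z\in D(F):\langle Fz,z\rangle\leq 0,\ \langle z,\psi_0\rangle\geq 0\}$ built from the negative direction $\psi_0$ of $F$, shows $K$ is invariant under the group $S_0(t)$ generated by $N_0F$ (using conservation of $\langle Fz(t),z(t)\rangle$ and the spectral gap in $(H)$), hence invariant under the resolvent $(\mu I-N_0F)^{-1}$, and Krasnoselskii's theorem then \emph{forces} an eigenvector in $K$; assumptions $S_6)$ and $S_7)$ (injectivity of $N$, $\langle E^{-1}\phi,\phi\rangle\neq 0$) rule out the eigenvalue being zero. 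Without this (or an equivalent) existence argument your proof does not establish instability.

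The lifting step is also flawed as written. The ansatz $\Psi=\psi+c_0\phi$ leads to the requirement
\begin{equation*}
\langle E\psi,\phi\rangle\,\frac{N\phi}{\|\phi\|^2}+c_0\,NE\phi=\lambda c_0\,\phi,
\end{equation*}
which is a \emph{vector} equation in $L^2(\mathcal G)$, not a scalar one: the vectors $N\phi$, $NE\phi$ and $\phi$ are in general linearly independent, so no choice of the single scalar $c_0$ solves it. The paper instead corrects $\omega_0$ by $a\Phi_1+b\Phi_2$ with $\Phi_1,\Phi_2$ the eigenfunctions of $E$ for its two negative eigenvalues, and uses the nondegeneracy $\langle N\phi,\Phi_1\rangle\neq0$ or $\langle N\phi,\Phi_2\rangle\neq0$ from $S_5)$b) to solve the resulting $2\times2$ system \eqref{ab}; note this is where the hypothesis $n(E)=2$ and $S_5)$b) actually enter, whereas your proposal never uses them. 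Finally, a small but real slip: reflection of $\sigma(NE)$ about the \emph{real} axis sends $\lambda\mapsto\bar\lambda$ and gives nothing new for real $\lambda$; the negative eigenvalue comes from the symmetry about the \emph{imaginary} axis, $\sigma(N_0F)=-\sigma(N_0F)$, combined with the localization of the essential spectrum on $i\mathbb R$ to ensure $-\lambda$ is genuinely an eigenvalue.
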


The proof of Theorem \ref{crit} is based in ideas from Lopes (\cite{Lopes}) and from
the following Krasnoelskii result on closed convex cone (see \cite{Kra}, Chapter 2, section 2.2.6).

\begin{theorem}\label{Kra}
Let $K$ be a closed convex cone of a Hilbert space $(X,\|\cdot\|)$ such that there are a continuous linear functional $\Phi$ and a constant $a>0$ such that $\Phi(u)\geqq a\|u\|$ for any $u\in K$. If $T:X\to X$ is a bounded linear operator  that leaves $K$ invariant, then $T$ has an eigenvector in $K$ associated to a nonnegative eigenvalue.
\end{theorem}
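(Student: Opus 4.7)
The plan is to reduce the eigenvalue problem to a fixed point problem for the ``normalized'' map on the slice $\Sigma = K \cap \{u : \Phi(u) = 1\}$, and then apply a fixed point theorem of Schauder-Tychonoff type using weak compactness in the Hilbert space $X$. First I would verify that $\Sigma$ is nonempty (for nontrivial $K$, picking $u_0 \in K \setminus \{0\}$ yields $u_0/\Phi(u_0) \in \Sigma$ since $\Phi(u_0) \geq a\|u_0\| > 0$), convex (as the intersection of the convex sets $K$ and $\{\Phi = 1\}$), norm-closed (by continuity of $\Phi$ and closedness of $K$), and norm-bounded (since $\Phi(u) \geq a\|u\|$ on $K$ forces $\|u\| \leq 1/a$ on $\Sigma$). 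Since $X$ is a Hilbert space, hence reflexive, any convex, norm-closed, bounded set is weakly compact, so $\Sigma$ is weakly compact.

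Next, I would handle the degenerate case: if $Tu_0 = 0$ for some $u_0 \in \Sigma$, then $u_0 \in K \setminus \{0\}$ is an eigenvector of $T$ with eigenvalue $0 \geq 0$, and we are done. Otherwise, for every $u \in \Sigma$, invariance of $K$ under $T$ yields $Tu \in K$, whence $\Phi(Tu) \geq a\|Tu\| > 0$. This lets me define
\[
\hat{T} : \Sigma \to \Sigma, \qquad \hat{T}(u) = \frac{Tu}{\Phi(Tu)}.
\]
Because $K$ is closed under nonnegative scalar multiplication, $\hat T(u) \in K$, and $\Phi(\hat T(u)) = 1$ by construction, so indeed $\hat T$ maps $\Sigma$ into itself. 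Any fixed point $u^* = \hat T(u^*)$ satisfies $T u^* = \lambda u^*$ with $\lambda = \Phi(Tu^*) \geq 0$, which is precisely the desired conclusion.

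The crux is the fixed point step. I would apply the Schauder-Tychonoff theorem on the convex, weakly compact set $\Sigma$ inside the locally convex Hausdorff space $(X, \text{weak})$. The required hypothesis is that $\hat T$ be continuous for the weak topology. Since $T$ is a bounded linear operator on a Hilbert space, $u_n \rightharpoonup u$ implies $T u_n \rightharpoonup Tu$; since $\Phi$ is a continuous linear functional, it is weakly continuous, so $\Phi(T u_n) \to \Phi(T u) > 0$. A standard argument then shows $\hat T(u_n) = T u_n / \Phi(T u_n) \rightharpoonup Tu/\Phi(Tu) = \hat T(u)$, proving weak continuity on $\Sigma$. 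Schauder-Tychonoff therefore delivers a fixed point $u^* \in \Sigma \subset K$, completing the argument.

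The main obstacle is the absence of any compactness assumption on $T$ in the hypotheses: Krasnoselskii's original proof uses norm-compactness of the operator together with Schauder's theorem in the norm topology. The route above bypasses this by exploiting reflexivity of $X$ and the fact that bounded linear operators on Hilbert spaces are automatically weakly continuous, which replaces norm-compactness of $T$ by weak compactness of the invariant slice $\Sigma$. A secondary point needing care is the positivity of $\Phi$ on $K$: the estimate $\Phi(u) \geq a\|u\| \geq 0$ guarantees $\Phi \geq 0$ on $K$, which is what ensures the resulting eigenvalue $\lambda = \Phi(Tu^*)$ is nonnegative, not merely real.
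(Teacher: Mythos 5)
The paper does not actually prove Theorem \ref{Kra}: it is stated as a quoted result and attributed to Krasnoselskii's book \cite{Kra}, so there is no in-paper argument to compare yours against. Judged on its own, your proof is correct and is essentially the natural argument for this statement. The slice $\Sigma=K\cap\{\Phi=1\}$ is indeed nonempty (for $K\neq\{0\}$, which the theorem implicitly assumes since an eigenvector must be nonzero), convex, norm-closed and contained in the ball of radius $1/a$, hence weakly compact by Mazur's theorem plus reflexivity; the normalized map $\hat T$ is well defined once the degenerate case $Tu_0=0$ is split off, because $\Phi(Tu)\geqq a\|Tu\|>0$ on the remaining points; and a Tychonoff fixed point in $\Sigma$ gives $Tu^*=\Phi(Tu^*)u^*$ with $\Phi(Tu^*)\geqq 0$. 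Your observation that the hypothesis $\Phi(u)\geqq a\|u\|$ on $K$ substitutes for the compactness of $T$ that Krasnoselskii's classical positive-eigenvector theorems require is exactly the right point: it is what makes the invariant slice weakly compact, so that weak-to-weak continuity of the bounded operator $T$ suffices. The only place where you should tighten the wording is the verification of weak continuity of $\hat T$: on a non-separable Hilbert space the weak topology restricted to a bounded set need not be metrizable, so arguing with sequences $u_n\rightharpoonup u$ does not literally establish continuity for the Schauder--Tychonoff theorem. The fix is cosmetic: run the identical computation with nets $u_\alpha\rightharpoonup u$ (bounded linear operators and continuous functionals are weak-to-weak continuous for nets, $\Phi(Tu_\alpha)\to\Phi(Tu)>0$, and $f(Tu_\alpha)/\Phi(Tu_\alpha)\to f(Tu)/\Phi(Tu)$ for every $f\in X^*$). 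With that adjustment the argument is complete.
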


\begin{proof} ({\bf{Proof of Theorem \ref{crit}}}) Our first step is to show that the operator $N_0E: D(N_0E)\subset X_2\to X_2$ has a real positive and a real negative eigenvalue. Indeed, from assumption $(H)$ we consider   $\psi_0\in D(F)=D(E)\cap X_2$, $\|\psi_0\|=1$ and $\lambda_0<0$ such that $F\psi_0=\lambda_0 \psi_0$. We define,
 \begin{equation*}
K=\{z\in D(F): \langle Fz, z\rangle \leqq 0,\;\;\text{and}\;\; \langle z, \psi_0\rangle\geqq 0 \}
\end{equation*}
 then $K$ is a nonempty closed convex cone in $L^2(\mathcal G)$. Moreover, this cone is invariant under the group $\{S_0(t)\}$. Indeed, we will use a density argument based in the existence of a core for $A_{ext}$. Thus, from semi-group theory follows that the space
 $$
 D(A_{ext}^\infty)=\bigcap_{n\in \mathbb N} D(A_{ext}^n)
 $$
 with $D(A_{ext}^n)=\{f\in D(A_{ext}^{n-1}): A_{ext}^{n-1}f\in D(A_{ext})\}$, result to be dense in $L^2(\mathcal G)$ and it is a $\{S_0(t)\}_{t\in \mathbb R}$-invariant subspace of $D(A_{ext})$. Thus, $D(A_{ext}^\infty)$ is a core for $A_{ext}$.  Therefore
is enough to consider the case $f\in K\cap D(A_{ext}^\infty)$ and so we obtain that the reduced Hamiltonian equation
 \begin{equation}
\left\{ \begin{array}{ll}
\dot{z}=N_0F z\\
z(0)= f
\end{array} \right.
 \end{equation}
 has solution $z(t)=S_0(t)f\in D(A_{ext}^\infty)$ and therefore from the self-adjoint property of $F$ and the skew-symmetric property of $N_0$ we obtain
 $$
 \frac{d}{dt}  \langle Fz(t), z(t)\rangle=  \langle FN_0Fz(t), z(t)\rangle +  \langle Fz(t), N_0Fz(t)\rangle =0,
 $$
 then for all $t$, $ \langle Fz(t), z(t)\rangle= \langle Ff, f\rangle \leqq 0$. Next, we suppose $\langle f, \psi_0\rangle> 0$ and that there is $t_0$ such that $\langle S_0(t_0) f, \psi_0\rangle<0$. Then by continuity of the flow $t\to S_0(t)f$ there is $\tau\in (0, t_0)$ with $\langle S_0(\tau)f, \psi_0\rangle=0$. Now, from  assumption $(H)$
  we have from the spectral theorem for self-adjoint operators the orthogonal decomposition for $f_\tau=S_0(\tau)f,$
  $$
 f_\tau =\sum_{i=1}^m a_i h_i +g,\quad g\bot h_i, \;\; \text{for all}\;\; i,
 $$
 where $Fh_i=\lambda_i h_i$, $\|h_i\|=1$, $\lambda_i\in \sigma_{d}(F)$ with $\lambda_i\geqq \eta$, and $\langle Fg,g\rangle\geqq \theta \|g\|^2$, $\theta>0$.
 Therefore,
 $$
 0\geqq  \langle Ff_\tau, f_\tau\rangle\geqq \sum_{i=1}^m a^2_i\lambda_i +\theta \|g\|^2\geqq \eta \sum_{i=1}^m a^2_i+\theta \|g\|^2\geqq 0.
 $$
 Thus, it follows $g=0$ and $a_i=0$ for $i$. Therefore, $S_0(\tau)f=0$ and since $S_0(t)$ is a group we obtain $f=0$ and so $\langle f, \psi_0\rangle= 0$ which is a contradiction. Now we suppose $\langle f, \psi_0\rangle= 0$, then the former analysis shows  $f=0$ and so $S_0(t)f\equiv 0$ for all $t$. It shows the invariance of $K$ by $S_0(t)$. Then, for $\mu$ large we obtain from semigroup's theory the integral representation of the resolvent  
 \begin{equation}\label{lapla}
 Tz=(\mu I-N_0F)^{-1}(z)=\int_0^\infty e^{-\mu t} S_0(t)z dt
  \end{equation}
 and it also leaves $K$ invariant. Next, for  $\Phi: L^2(\mathcal G)\to \mathbb R$ defined by $\Phi(z)= \langle z, \psi_0 \rangle$ we will see that there is $a>0$ such that $\Phi(z)\geqq a\|z\|$ for any $z\in K$. Indeed, suppose for $\|g\|=1$, $\langle g, \psi_0 \rangle=\gamma>0$  and  $\langle Fg, g\rangle\leqq 0$. Since $Ker(\Phi)$ is a hyperplane we obtain $g=z+\gamma \psi_0$ with $\langle z, \psi_0 \rangle=0$.  So, $-\lambda\gamma^2\geqq  \langle Fz, z\rangle$. Now, from the orthogonal decomposition 
  $
 z =\sum_{i=1}^m  \langle z, h_i \rangle h_i +g,\quad g\bot h_i, \;\; \text{for all}\;\; i,
 $
follows for $\eta, \theta>0$,
$\langle Fz, z\rangle=min\{\eta, \theta\}(1-\gamma^2)$. Then,
$$
\langle g, \psi_0 \rangle=\gamma\geqq \sqrt{\frac{min\{\eta, \theta\}}{-\lambda+min\{\eta, \theta\}}}\equiv a.
$$
Therefore,  by the analysis above and Theorem \ref{Kra} there are an $\alpha\geqq 0$ and a nonzero element $\omega_0\in K$ such that $
 (\mu I -N_0F)^{-1}(\omega_0)=\alpha \omega_0$. It is immediate that $\alpha>0$ and so $N_0F\omega_0 = \zeta \omega_0$ with $\zeta=\frac{\mu \alpha-1}{\alpha}$. Next we see that $\zeta\neq 0$. Suppose that $\zeta= 0$, then from \eqref{NF} and the injectivity of $N$ we obtain
 $$
 E\omega_0=\langle E\omega_0, \phi\rangle \frac{\phi}{\|\phi\|^2}.
 $$
 From assumption $S_5)$, let $\psi\in D(E)$ with $E\psi=\phi$, then since $E$ is invertible follows
 $$
 \omega_0= \frac{\langle E\omega_0, \phi\rangle}{\|\phi\|^2}\psi\;\;\text{and}\;\; 0=\langle \omega_0, \phi\rangle= \frac{\langle E\omega_0, \phi\rangle}{\|\phi\|^2}\langle \psi, \phi\rangle.
 $$
 Since $\langle \psi, \phi\rangle\neq 0$ follows $\langle E\omega_0, \phi\rangle=0$. Hence $E\omega_0=0$ and so $\omega_0=0$, which is a contradiction. Then, $N_0F$ has a nonzero real eigenvalue $\zeta$.
 
 Now, we have $\sigma(N_0F)=\sigma((N_0F)^*)=-\sigma(FN_0)=
 -\sigma(FN_0FF^{-1})=-\sigma(N_0F)$ and so $-\zeta$ also belongs to  $\sigma(N_0F)$. Thus from Theorem 5.8  of \cite{GrilSha90}, the essential spectrum of $N_0F$ lies on the imaginary axis and then $-\zeta$ is an eigenvalue of $N_0F$ and this proves the claim.
 
 Thus, for $\omega_0 \in D(N_0F)$, $\omega_0 \neq 0$, and $ \zeta>0$ we have,
  \begin{equation}\label{final}
NE\omega_0=\langle E\omega_0, \phi\rangle  \frac{N\phi}{\|\phi\|^2}+\zeta \omega_0.
 \end{equation}
 Next we consider two cases:
 \begin{enumerate}
 \item[a)] Suppose $\langle E\omega_0, \phi\rangle=0$, then $NE\omega_0=\zeta \omega_0$ and the proof of the criterium finishes.
  \item[b)] Suppose $r\equiv \frac{1}{\|\phi\|^2}\langle E\omega_0, \phi\rangle\neq0$. From Assumption $S5)$, we consider
  $$
  u=\omega_0 + a \Phi_1 + b\Phi_2, \;\; E\Phi_i=\lambda_i \Phi_i, \;1\leqq i \leqq 2,
  $$
  with $\|\Phi_i\|=1$, $\Phi_1\bot \Phi_2$. We will find $a, b\in \mathbb{R}$, not both zero, such that $NEu= \zeta u$ and $u\neq 0$. Thus, we obtain initially the relation
  \begin{equation}\label{final1} 
 r N\phi +a \lambda_1 N\Phi_1 + b \lambda_2 N\Phi_2=a \zeta \Phi_1 + b \zeta \Phi_2.
  \end{equation}
  Therefore, from the skew-symmetric property of $N$ we obtain the system
 \begin{equation}\label{ab}
\left\{ \begin{array}{ll}
a \zeta +b \lambda_2\langle N\Phi_1, \Phi_2\rangle=r \langle N\phi, \Phi_1\rangle\\
a \lambda_1\langle N\Phi_1, \Phi_2\rangle-\zeta b=-r \langle N\phi, \Phi_2\rangle.
\end{array} \right.
 \end{equation} 
  Thus, since the determinant of the coefficients   is different of zero ($\zeta^2 +\lambda_1\lambda_2 \langle N\Phi_1, \Phi_2\rangle^2\neq 0$), $r\neq 0$ and from  Assumption $S5)$, we obtain a nontrivial solution for  \eqref{ab}. 
  
Next we see $u\neq 0$. Indeed, suppose $u=0$. Then, from relation $\omega_0=-a \Phi_1 -b\Phi_2$ and by substituting in \eqref{final} we obtain 
 \begin{equation}\label{ab2}
a \lambda_1 r \langle N\phi, \Phi_1\rangle + b \lambda_2 r \langle N\phi, \Phi_2\rangle=0.
 \end{equation} 
 Then, by using system \eqref{ab} in \eqref{ab2} we arrive to the relation $\zeta (a^2 \lambda_1+ b^2  \lambda_2)=0$, it which is a contradiction.
  It is proves Theorem \ref{crit}.
  \end{enumerate}
\end{proof}

Next, we consider the case $n(E)=1$ in Assumption $S_5)$.

\begin{theorem}\label{crit2}
Suppose the assumptions $S_1), S_2), S_3), S_5), S_7)$ hold with $n(E)=1$. Then the operator $NE$ has a real positive and a real negative eigenvalue. 
\end{theorem}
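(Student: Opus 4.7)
The plan is to adapt the cone construction from the proof of Theorem \ref{crit} to this simpler setting. Since $n(E)=1$ and assumption $S_4)$ is not available, I work directly with $NE$ on $L^2(\mathcal G)$ rather than with the projected operator $N_0F$. As in \eqref{hamil}, the boundedness of $\phi,\phi'$ together with the semigroup-perturbation argument shows that $NE$ generates a strongly continuous $C_0$-group $\{S(t)\}_{t\in\mathbb R}$ on $L^2(\mathcal G)$. Let $\psi_0\in D(E)$, $\|\psi_0\|=1$, be the eigenfunction associated to the unique negative eigenvalue $\lambda_0$ of $E$ guaranteed by $S_5)$, and define
\begin{equation*}
K=\{z\in D(E):\langle Ez,z\rangle\leq 0 \text{ and }\langle z,\psi_0\rangle\geq 0\},
\end{equation*}
a closed convex cone in $L^2(\mathcal G)$.

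Two properties must be verified: (i) $K$ is invariant under $\{S(t)\}_{t\in\mathbb R}$, and (ii) the functional $\Phi(z)=\langle z,\psi_0\rangle$ satisfies $\Phi(z)\geq a\|z\|$ on $K$ for some $a>0$. For (ii), writing $z=\gamma\psi_0+z^\perp$ with $z^\perp\perp\psi_0$ and using the spectral gap $\sigma(E|_{\{\psi_0\}^\perp})\subset[r_0,+\infty)$, the estimate $\langle Ez,z\rangle\geq\gamma^2\lambda_0+r_0(\|z\|^2-\gamma^2)$ together with $\langle Ez,z\rangle\leq 0$ forces $\gamma\geq\sqrt{r_0/(r_0-\lambda_0)}\,\|z\|$, which gives the explicit constant $a$. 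For (i), I would use the $\{S(t)\}$-invariant core $D(A_{ext}^\infty)$ already exploited in the proof of Theorem \ref{crit}: for $f$ in this core intersected with $K$, the self-adjointness of $E$ and the skew-symmetry of $N$ (assumption $S_7)$) yield $\frac{d}{dt}\langle Ez(t),z(t)\rangle=0$, so $\langle Ez(t),z(t)\rangle=\langle Ef,f\rangle\leq 0$ for all $t$. If $\langle f,\psi_0\rangle>0$ while $\langle S(t_0)f,\psi_0\rangle<0$ for some $t_0$, continuity produces $\tau\in(0,t_0)$ with $\langle z(\tau),\psi_0\rangle=0$; expanding $z(\tau)$ spectrally on $\{\psi_0\}^\perp$ and using the conserved energy forces $z(\tau)=0$, hence $f=0$ by the group property, a contradiction. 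The case $\langle f,\psi_0\rangle=0$ is analogous, and density of the core extends invariance to all of $K$.

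Once (i)-(ii) are in hand, for $\mu>0$ large the resolvent $T=(\mu I-NE)^{-1}=\int_0^\infty e^{-\mu t}S(t)\,dt$ leaves $K$ invariant, and Krasnoselskii's Theorem \ref{Kra} provides a nonzero $\omega_0\in K$ and $\alpha>0$ with $T\omega_0=\alpha\omega_0$; equivalently, $NE\omega_0=\zeta\omega_0$ with $\zeta=(\mu\alpha-1)/\alpha\in\mathbb R$. One rules out $\zeta=0$ by noting that $NE\omega_0=0$ together with the injectivity of $N$ on $D(N)$ ($S_7)$) and the invertibility of $E$ ($S_5)$) would force $\omega_0=0$. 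Finally, the spectral symmetry $\sigma(NE)=-\sigma(NE)$ (obtained exactly as in Theorem \ref{crit}, from $(NE)^*=-EN$ being similar to $-NE$ via $E$), combined with $\sigma_{ess}(NE)\subset i\mathbb R$ from \cite[Thm.\ 5.8]{GrilSha90}, upgrades $-\zeta$ to a point eigenvalue as well, producing the required pair of real eigenvalues of opposite signs. The main obstacle I foresee is property (i): making the formal energy conservation rigorous and running the continuity/contradiction argument for $\langle z(t),\psi_0\rangle$ through the core is delicate because $K$ is a subset of $D(E)\subsetneq L^2(\mathcal G)$ while $\{S(t)\}$ acts on the whole space; this is precisely the same technicality handled in Theorem \ref{crit} and should go through unchanged here.
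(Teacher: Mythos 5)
Your proposal is correct and follows essentially the same route as the paper: the paper's proof of Theorem \ref{crit2} likewise works directly with $NE$ on $L^2(\mathcal G)$ (no projection onto $[\phi]^\bot$), uses the identical cone $K_0=\{z\in D(E):\langle Ez,z\rangle\leq 0,\ \langle z,\psi_0\rangle\geq 0\}$, invokes the invariance argument from Theorem \ref{crit}, and applies Krasnoselskii's theorem to the resolvent $(\mu I-NE)^{-1}$. The paper states these steps tersely by reference to the earlier proof; your write-up simply supplies the details (the lower bound for $\Phi$, the exclusion of $\zeta=0$ via injectivity of $N$ and invertibility of $E$, and the spectral symmetry) that the paper leaves implicit.
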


\begin{proof} In this case we do not need to reduce the eigenvalue problem \eqref{stat11} to the orthogonal subspace $[\phi]^\bot$. Indeed, from assumption $S_1)$ we have  that $NE$ is the infinitesimal generator of a $C_0$-group $\{S(t)\}_{t\in \mathbb R}$.  For $\psi_0\in D(E)$, $\|\psi_0\|=1$ and $\lambda_0<0$ such that $E\psi_0=\lambda_0 \psi_0$, we consider the following nonempty closed convex cone 
 \begin{equation*}
K_0=\{z\in D(E): \langle Ez, z\rangle \leqq 0,\;\;\text{and}\;\; \langle z, \psi_0\rangle\geqq 0 \}.
\end{equation*}
Similarly as in the proof of Theorem \ref{crit}, $K_0$ is invariant by the group $S(t)$. Thus, for $T=(\mu I- NE)^{-1}$, $\mu$ large, $T$ leaves $K_0$ invariant. Then, by using Theorem \ref{Kra} with this $T$ and $\Phi(z)=\langle z, \psi_0\rangle$, we can see that  $NE$ has a real positive and a real negative eigenvalue. This finishes the proof.
\end{proof}
 
 \subsection{One application of Theorem \ref{crit}}

 The following framework will be used in the study of linear instability of bump and tail profiles on star graph. Suppose that  assumptions $S_1)-S_7)$ above hold with $n(E)=2$ and for $\psi$ such that  $E\psi=\phi$ we have $\langle \psi, \phi\rangle< 0$. Then assumption $(H)$ is true. Indeed, from
 assumption $S_6)$ we obtain that $F$ is invertible.  Next,  there are $a, b\in \mathbb R$ (not both zeros) with $\langle a\Phi_1+b \Phi_2, \phi\rangle=0$ and $
\langle F(a\Phi_1+b \Phi_2), a\Phi_1+b \Phi_2 \rangle<0$. Then via min-max principle we have $n(F)\geqq 1$. Next, suppose that $n(F)=2$ and consider $z_1, z_2\in X_2$, $z_1\bot z_2$, $\mu_1, \mu_2<0$, and $Fz_i=\mu_i z_i$. Then we get
$$
\langle Fz_i, z_i \rangle =\langle Ez_i, z_i \rangle =\mu_i\|z_i\|^2<0, \;\;\text{and}\;\; \langle Ez_1, z_2 \rangle=0.
$$
 Moreover, since $\psi\notin X_2=[\phi]^\bot$ follows that set $\{\psi, z_1, z_2\}\subset E$ is linearly independent and we have the relations
 $$
\langle Ez_i, \psi  \rangle = \langle z_i,  \phi \rangle=0, \;\;\text{and},\;\; \langle E\psi, \psi \rangle=\langle \phi, \psi \rangle<0.
$$
 Therefore $\langle E(\alpha \psi+\beta z_1+ \theta z_2), \alpha \psi+\beta z_1+ \theta z_2\rangle<0$ and so $n(E)\geqq 3$, it which is not true. Then $n(F)=1$ and all other eigenvalues (and the remain of the spectrum) are contained in $[\eta, +\infty)$. Thus, from Theorem \ref{crit} follows that $NE$ has a real positive and a real negative eigenvalue.

 \section{Linear instability of tail and  bump solutions for the KdV on two half-lines}
 
 The focus of this section is to apply the linear instability criterium in  Theorems \ref{crit} and \ref{crit2} to the KdV on a star graph
 with two half-lines and a $\delta$-interaction-type at the vertex $\nu=0$. Our main result is the following,
 
 \begin{theorem}\label{main} For $Z\neq 0$, $\alpha_{-}=\alpha_{+}>0$, $\beta_{-}=\beta_{+}<0$, $-\frac{\beta_{+}}{\alpha_{+}}>\frac{Z^2}{4}$,  let  $\phi_Z\equiv (\phi_{-}, \phi_{+})  \in D(A_Z)$ defined for $\phi_{+}(x)$  by the formula \eqref{soli6} with  $x>0$ and $\phi_{-}(x)=\phi_{+}(-x)$ for $x<0$.  We consider the following family of  stationary solutions for the Korteweg-de Vries model \eqref{kdv3} on the star graph $\mathcal G$ with $\bold E=(-\infty, 0) \cup (0, +\infty)$,
$$
U(x,t)=(\phi_{-}(x), \phi_{+}(x)),\quad t\in \mathbb R.
$$ 
Then,   this  family  of tail ($Z<0$) and bump ($Z>0$) profiles  are linearly unstable.  
   \end{theorem}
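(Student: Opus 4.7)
The plan is to verify hypotheses $S_1)$--$S_7)$ of the abstract instability criterion and then invoke Theorem \ref{crit2} in the tail regime $Z<0$ (where I expect $n(E)=1$) and Theorem \ref{crit} via the ``one application'' framework in the bump regime $Z>0$ (where I expect $n(E)=2$). The setup hypotheses are largely automatic: $S_1)$ is Proposition \ref{L} combined with Stone's theorem; $S_3)$ takes $E=\mathrm{diag}(\mathcal{L}_-,\mathcal{L}_+)$ on the natural $\delta$-interaction $H^2$-domain (the remark after Theorem \ref{crit}); $S_4)$ is the computation \eqref{S_4}--\eqref{S_41}; $S_2)$ reduces to the identity $\phi_+'(0+)=(Z/2)\phi_+(0+)$ and \eqref{condition}, satisfied automatically by \eqref{soli6} together with $\phi_-(x)=\phi_+(-x)$; and $S_7)$ is immediate since $N=-\partial_x$ componentwise is skew-symmetric and injective on $L^2(\mathcal{G})$.

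\textbf{Spectral count for $E$.} The core step is the determination of $n(E)$. I would exploit the $\mathbb{Z}_2$-reflection symmetry of $\mathcal{G}$, preserved by the even profile $\phi$, to decompose $E=E_{\mathrm{even}}\oplus E_{\mathrm{odd}}$ on two copies of $L^2(0,+\infty)$: the even sector carries the Robin boundary condition $u'(0)=(Z/2)u(0)$ and the odd sector the Dirichlet condition $u(0)=0$, with both acting as $-\alpha_+\partial_x^2+\alpha_+ c-2\phi_+$ where $c=-\beta_+/\alpha_+$. At $Z=0$ the $\delta$-interaction vanishes and $E$ coincides with the classical full-line Schr\"odinger operator for the centered KdV soliton, whose P\"oschel--Teller spectrum $\{-5\alpha_+c/4,\,0,\,3\alpha_+c/4\}\cup[\alpha_+c,+\infty)$ places the negative eigenvalue in $E_{\mathrm{even}}$ (eigenfunction $\operatorname{sech}^3$, Neumann at $0$) and the zero eigenvalue in $E_{\mathrm{odd}}$ (eigenfunction $\phi_\infty'$, Dirichlet at $0$). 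Since the Dirichlet domain of $E_{\mathrm{odd}}(Z)$ is $Z$-independent and the potential depends analytically on $Z$, Kato's analytic perturbation theory applies and a short calculation gives
\[
\lambda_0'(0)=-\frac{2\int_0^{+\infty}(\partial_Z\phi_+)|_{Z=0}\,(\phi_\infty')^2\,dx}{\|\phi_\infty'\|^2_{L^2(0,+\infty)}}<0,
\]
because direct differentiation of \eqref{soli6} yields $(\partial_Z\phi_+)|_{Z=0}>0$ on $(0,+\infty)$. Hence $n(E)=2$ for $Z>0$ small (bump, extra negative eigenvalue in the odd sector) and $n(E)=1$ for $Z<0$ small (tail). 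To globalize to all $|Z|<2\sqrt{\alpha_+c}$, I would use a Sturm oscillation count: the zero-energy $L^2$-decaying solution $\phi_+'$ of the ODE on $(0,+\infty)$ has exactly one node when $Z>0$ (at the peak $x_0>0$) and no node when $Z<0$, which pins the Dirichlet negative-eigenvalue count of $E_{\mathrm{odd}}$ throughout the admissible interval.

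\textbf{Application and main obstacle.} In the tail case $S_5(a)$ is satisfied and Theorem \ref{crit2} directly yields a real positive eigenvalue of $NE$. In the bump case I invoke the ``one application'' framework: taking $\psi=-\partial_c\phi$ (differentiating the stationary family in the parameter $c$ at fixed $Z$; a quick check shows this preserves both $\delta$-boundary conditions and so $\psi\in D(E)$), one has $E\psi=\phi$ and $\langle\psi,\phi\rangle=-\tfrac12\partial_c\|\phi\|^2$, which I would verify to be strictly negative throughout $|Z|<2\sqrt{\alpha_+c}$ by an explicit evaluation of $\|\phi\|^2$ from \eqref{soli6} using the antiderivative $\int\operatorname{sech}^4=\tanh-\tfrac13\tanh^3$. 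The nondegeneracy $\langle N\phi,\Phi_2\rangle\neq 0$ needed for $S_5(b)$ follows because both vectors live in the odd sector and at $Z=0$ the inner product reduces to $-\|\phi_\infty'\|^2\neq 0$; analyticity of the eigenfunction branch propagates the non-vanishing to the admissible range. Theorem \ref{crit} then concludes. I expect the main obstacle to be exactly this globalization of the perturbative analysis from $Z=0$ to the full interval $|Z|<2\sqrt{\alpha_+c}$, specifically excluding any further crossing through zero of the critical eigenvalue of $E_{\mathrm{odd}}(Z)$; the Sturm oscillation count above is the cleanest way I see to handle this, with a parallel Sturm-type argument controlling the lowest Robin eigenvalue of $E_{\mathrm{even}}(Z)$.
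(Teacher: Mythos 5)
Your proposal is correct in overall architecture and reaches the same endpoint as the paper (verify $S_1)$--$S_7)$, then Theorem \ref{crit2} for the tail and Theorem \ref{crit} via the Subsection 4.2 framework for the bump, with $\psi=-\partial_\omega\phi_{Z,\omega}$ supplying $S_6)$), but your proof of the key spectral count $n(E_Z)=2$ for $Z>0$, $n(E_Z)=1$ for $Z<0$ takes a genuinely different route from Lemma \ref{Morse}. The paper identifies the limit operator $E_0$ at $Z=0$ with the full-line soliton linearization $\mathcal L_0$ via a gluing unitary, treats $\{E_Z\}$ as a real-analytic family of type (B) in the sense of Kato, determines the sign of the bifurcating second eigenvalue $\Omega(Z)$ by Taylor expansion, and then globalizes to all admissible $Z$ by a Riesz-projection continuation argument whose engine is Lemma \ref{ker} ($\ker E_Z=\{0\}$ for $Z\neq0$, so no eigenvalue can cross zero). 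You instead exploit the $\mathbb Z_2$-reflection symmetry to split $E_Z=E_{\mathrm{even}}\oplus E_{\mathrm{odd}}$ into a Robin and a Dirichlet half-line problem; this is a valid decomposition (the computation $u_-'(0-)=-u_+'(0+)$ in the even sector does give $u_+'(0+)=\tfrac{Z}{2}u_+(0+)$), it localizes the perturbation to the Dirichlet sector where the domain is $Z$-independent (so only regular perturbation of the potential is needed, technically lighter than the type-(B) machinery), and your first-order formula and the sign of $\partial_Z\phi_+|_{Z=0}$ are right. What the paper's approach buys is a uniform, sector-free globalization; what yours buys is structural transparency and an elementary perturbation step.

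The one thin spot is the global count in the even/Robin sector: you need $n(E_{\mathrm{even}}(Z))=1$ for \emph{all} admissible $Z$, and the Sturm oscillation count you sketch is clean for the Dirichlet sector (where the Weyl solution at zero energy is $\phi_+'$, with one interior zero iff $Z>0$) but for the Robin condition the relation between the negative-eigenvalue count and the zeros of $\phi_+'$ involves an extra boundary comparison that you do not pin down; note also that for $Z>0$ the factorization argument via $\phi_+'$ fails because $\phi_+'$ vanishes at the peak. The easiest patch is to transplant the paper's own mechanism into each sector: $\ker E_{\mathrm{even}}(Z)=\ker E_{\mathrm{odd}}(Z)=\{0\}$ for $Z\neq0$ in the admissible range (this is exactly the computation of Lemma \ref{ker} restricted to even/odd data), so each sector's Morse index is locally constant in $Z$ on $(0,2\sqrt{-\beta_+/\alpha_+})$ and on $(-2\sqrt{-\beta_+/\alpha_+},0)$ and is therefore fixed by your small-$Z$ analysis. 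With that repair the argument closes. Finally, your continuation of $\langle N\phi_Z,\Phi_2\rangle\neq0$ to all admissible $Z$ by ``analyticity'' only gives a discrete zero set, not emptiness; the paper's own statement has the same looseness, so this is not a defect relative to the source, but it deserves the same caveat.
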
 

The proof of Theorem \ref{main} will be divided in several steps and we consider the cases  $\alpha_{-}=\alpha_{+}=1$, $\beta_{-}=\beta_{+}=-1$ and $1>\frac{Z^2}{4}$, without loss of generality.  From Proposition \ref{L},  assumption $S_1)$ is filled by   $(A_Z, D(A_Z))$ defined in \eqref{domain8}. The linear eigenvalue problem to be solve \eqref{stat11} for $\lambda>0$, it is determined by the matrices  $N, E$ in \eqref{stat10} with the Schr\"odinger operators on half-lines
$$
\mathcal L_{\pm}=- \frac{d^2}{dx^2}+1 -2\phi_{\pm}.
$$
 The domain for $E=E_Z$ is given in $H^2(\mathcal G)=H^2(-\infty, 0)  \oplus H^2(0,+\infty)$ for  $Z\in \mathbb R$ by 
 \begin{equation}\label{L_i}
D(E_Z)= \{(u_{-},  u_{+})\in H^2(\mathcal G) : u_{-}(0-)=u_{+}(0+), u'_{+}(0+)- u'_{-}(0-)=Zu_{-}(0-)\},
\end{equation}
and so $(E_Z, D(E_Z))$ represents a self-adjoint family of point interactions on all the line by identifying $D(E_Z)$ as $H^2(\mathbb R-\{0\})\cap H^1(\mathbb R)$. Moreover, it is immediate that  $D(A_Z)\subset D(E_Z)$ (assumption $S_3)$). From Remark 4.2-item $2)$ we have assumption $S_4)$. Assumption $S_7)$ follows by continuity.

 The following lemma implies that $E_Z$ is invertible (assumption $S_5)$).

\begin{lemma}\label{ker} For every $Z\neq 0$ we have $Ker(E_Z)=\{0\}$. Moreover, since $\sigma_{ess}(E_Z)=[1, +\infty)$ we obtain $E_Z: D(E_Z)\to L^2(\mathcal G)$ is invertible.
\end{lemma}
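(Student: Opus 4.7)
The plan is to normalize to $\alpha_\pm=1$, $\beta_\pm=-1$, so that the operator acts as $\mathcal L_\pm=-\partial_x^2+1-2\phi_\pm$ on each half-line with the $\delta$-coupling $u_-(0-)=u_+(0+)$, $u'_+(0+)-u'_-(0-)=Zu(0)$. Any $u=(u_-,u_+)\in D(E_Z)$ in the kernel solves the homogeneous second-order ODE $\mathcal L_\pm u_\pm=0$ on each half-line. Because $\phi_\pm$ satisfies the stationary equation \eqref{stat2}, differentiating once shows that $\phi'_\pm$ lies in the kernel of $\mathcal L_\pm$. The characteristic exponents of $\mathcal L_\pm$ at $\pm\infty$ are $\pm 1$, so on $(0,+\infty)$ (resp.\ $(-\infty,0)$) the space of solutions of $\mathcal L_\pm u=0$ is two-dimensional but only a one-dimensional subspace of it consists of $L^2$-decaying solutions at infinity; since $\phi'_\pm\to 0$ exponentially, this decaying subspace is spanned by $\phi'_\pm$. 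Therefore, $u\in H^2(\mathcal G)$ forces $u_+=c_+\phi'_+$ and $u_-=c_-\phi'_-$ for some constants $c_\pm\in\mathbb R$.

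Next I would exploit the even symmetry $\phi_-(x)=\phi_+(-x)$ to compute the boundary data: $\phi'_-(0-)=-\phi'_+(0+)$ and $\phi''_-(0-)=\phi''_+(0+)$. Imposing continuity at the vertex yields $c_-\phi'_-(0-)=c_+\phi'_+(0+)$, i.e.\ $c_-=-c_+$, under the hypothesis $\phi'_+(0+)\neq 0$; this is guaranteed by the identity $\phi'_+(0+)=\tfrac{Z}{2}\phi_+(0+)$ established after \eqref{condition}, together with $Z\neq 0$ and $\phi_+(0+)=\tfrac32(1-\tfrac{Z^2}{4})>0$ (since $Z^2<4$). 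Substituting $u_\pm=c_\pm\phi'_\pm$ and $c_-=-c_+$ into the derivative-jump condition gives
\[
2c_+\phi''_+(0+)=\tfrac{Z^2}{2}c_+\phi_+(0+),
\]
that is $c_+\bigl[\phi''_+(0+)-\tfrac{Z^2}{4}\phi_+(0+)\bigr]=0$. Using the ODE at $0+$ to replace $\phi''_+(0+)=\phi_+(0+)-\phi_+^2(0+)$ and plugging in $\phi_+(0+)=\tfrac32(1-\tfrac{Z^2}{4})$, an elementary calculation reduces the bracket to $-\tfrac{3}{4}(1-\tfrac{Z^2}{4})^2$, which is nonzero under the standing assumption $Z^2<4$. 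Hence $c_+=0$, so $c_-=0$, and $u\equiv 0$.

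To finish, I would address the essential spectrum: since $\phi_\pm(x)\to 0$ exponentially as $|x|\to\infty$ and the $\delta$-coupling at the vertex is a finite-rank (relatively compact) perturbation of the free Laplacian with Neumann/decoupled boundary conditions, Weyl's theorem for self-adjoint perturbations yields $\sigma_{ess}(E_Z)=\sigma_{ess}(-\partial_x^2+1)=[1,+\infty)$. Combined with $\ker(E_Z)=\{0\}$, this shows $0\notin\sigma(E_Z)$, hence $E_Z:D(E_Z)\to L^2(\mathcal G)$ is boundedly invertible.

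The main obstacle is really the algebraic check that the bracket $\phi''_+(0+)-\tfrac{Z^2}{4}\phi_+(0+)$ does not vanish: without the assumption $Z^2<4$ the expression could degenerate, and it is precisely the existence hypothesis $-\beta_+/\alpha_+>Z^2/4$ carried over from \eqref{soli6} that rules this out. Once this short computation is in hand, the rest of the argument is transparent, as it merely combines standard asymptotic ODE analysis with the symmetry of the profile and the explicit $\delta$-type matching conditions.
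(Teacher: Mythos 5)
Your proposal is correct and follows essentially the same route as the paper: identify any kernel element as $c_\pm\phi'_\pm$ on each half-line (the unique decaying solution of $\mathcal L_\pm u=0$), use the symmetry $\phi'_-(0-)=-\phi'_+(0+)$, $\phi''_-(0-)=\phi''_+(0+)$ together with continuity to get $c_-=-c_+$, and then show the derivative-jump condition forces $c_+=0$ unless $Z^2=4$, which is excluded by $-\beta_+/\alpha_+>Z^2/4$; the essential spectrum is handled by Weyl's theorem in both arguments. Your explicit evaluation of the bracket as $-\tfrac34(1-\tfrac{Z^2}{4})^2$ is just an equivalent packaging of the paper's deduction $1-\phi_+(0+)=\tfrac{Z^2}{4}\Rightarrow Z^2=4$, and your remark that $\phi'_+(0+)=\tfrac{Z}{2}\phi_+(0+)\neq 0$ is needed to conclude $c_-=-c_+$ is a small point the paper leaves implicit.
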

 
 \begin{proof} Let $u=(u_{-},  u_{+})\in D(E_Z)$, $E_Zu=0$. Since $\mathcal L_{\pm} \phi'_{\pm}=0$, we need to have $u_{-}(x)=a \phi'_{-}(x)$, $x<0$, and $u_{+}(x)=b\phi'_{+}(x)$, $x>0$ (see \cite{BerShu91}). Next, from the continuity property at zero for $u$, $ \phi'_{+}(0+)=-\phi'_{-}(0-)$, and $ \phi''_{+}(0+)=\phi''_{-}(0-)$ we have that
 \begin{equation}\label{1L}
 a=-b,\;\;\text{and}\;\; -2a \phi''_{+}(0+)=Zu_{+}(0+)=Zu_{-}(0-)=Za\phi'_{-}(0-)=-\frac{Z}{2}a\phi_{+}(0+).
  \end{equation}
Suppose $a\neq 0$. Then, from  \eqref{1L} we have $\phi''_{+}(0+)=\frac{Z^2}{4}\phi_{+}(0+)$ and so from \eqref{soli1} we arrive to
$$
1-\phi_{+}(0+)=\frac{Z^2}{4} \Longrightarrow Z^2=4,
$$ 
 it which does not happen ($1>\frac{Z^2}{4}$). Then, $a=b=0$ and $u\equiv 0$.
 
 Next, by Weyl's theorem (see Theorem XIII.14 of \cite{RS}), the essential spectrum of $E_Z$ coincides with $[1, +\infty)$. Then $E_Z$ is an invertible operator. This finishes the proof.
\end{proof}

 \begin{lemma}\label{Morse} For  $Z> 0$ we have $n(E_Z)=2$ and for  $Z< 0$ that $n(E_Z)=1$.
\end{lemma}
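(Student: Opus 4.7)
The plan is to exploit the reflection symmetry. Identifying $D(E_Z)$ with a subspace of $H^1(\mathbb{R})$, the operator $E_Z$ becomes $-\partial_x^2 + 1 - 2\tilde\phi$ on $\mathbb{R}$ with $\tilde\phi(x)=\phi_+(|x|)$ together with a $\delta$-coupling of strength $Z$ at the origin; both the potential and the $\delta$-condition are invariant under $x\mapsto -x$, so $E_Z = E_Z^{+}\oplus E_Z^{-}$ splits into even and odd parts. Restricting to the half-line $(0,\infty)$, the odd part is $\mathcal{L}_+^D$, i.e.\ the half-line operator $\mathcal{L}_+ := -\partial_x^2 + 1 - 2\phi_+$ with Dirichlet BC $u(0)=0$ (the $\delta$-jump is trivial since $u(0)=0$), while the even part is $\mathcal{L}_+^R$, the same differential expression with Robin BC $u'(0)=\tfrac{Z}{2}u(0)$ (the jump $u'(0+)-u'(0-)=Zu(0)$ combined with $u'(0-)=-u'(0+)$). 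Therefore $n(E_Z) = n(\mathcal{L}_+^D) + n(\mathcal{L}_+^R)$.

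For $\mathcal{L}_+^D$ I would use analytic perturbation in $Z$ on $(-2,2)$ together with a non-crossing step. Any kernel element of $\mathcal{L}_+^D$ must be proportional to the unique $L^2$-at-$+\infty$ solution $\phi_+'$ of $\mathcal{L}_+u=0$, and the Dirichlet condition $\phi_+'(0)=0$ together with $\phi_+'(0)=\tfrac{Z}{2}\phi_+(0)$ (from the explicit soliton formula) forces $Z=0$. Hence $n(\mathcal{L}_+^D)$ is constant on each of the components $(-2,0)$ and $(0,2)$. At $Z=0$ the operator is exactly the odd restriction of the standard soliton linearization, whose spectrum $\{-5/4,0,3/4\}\cup [1,\infty)$ has the $0$-eigenfunction $\phi'$ lying in the odd sector; so $n(\mathcal{L}_+^D)|_{Z=0}=0$, and the only relevant discrete eigenvalue is the moving $\lambda(Z)$ that equals $0$ at $Z=0$. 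A first-order perturbation calculation yields $\lambda'(0) = -2\|\phi'\|^{-2}\int_0^\infty (\partial_Z\phi_+)|_{Z=0}\,(\phi')^2\, dx$, and since $(\partial_Z\phi_+)|_{Z=0} = \tfrac{3}{2}\sech^2(x/2)\tanh(x/2) \geq 0$ on $(0,\infty)$ the derivative is strictly negative. Combined with the non-crossing this gives $n(\mathcal{L}_+^D)=1$ for $Z\in(0,2)$ and $n(\mathcal{L}_+^D)=0$ for $Z\in(-2,0)$.

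For $\mathcal{L}_+^R$ the lower bound $n(\mathcal{L}_+^R)\geq 1$ follows from a test-function calculation: $\phi_+ \in D(\mathcal{L}_+^R)$ because $\phi_+'(0)=\tfrac{Z}{2}\phi_+(0)$, and using $\mathcal{L}_+\phi_+ = -\phi_+^2$ (from the stationary equation) plus integration by parts in which the Robin boundary term cancels, one obtains $\langle \mathcal{L}_+^R\phi_+,\phi_+\rangle = -\int_0^\infty \phi_+^3\, dx < 0$. The same non-crossing strategy handles the upper bound: a zero eigenvalue of $\mathcal{L}_+^R$ would force $\phi_+'$ to satisfy the Robin BC, that is $\phi_+''(0)=\tfrac{Z}{2}\phi_+'(0)$, and combining this with $\phi_+''(0)=\phi_+(0)(1-\phi_+(0))$, $\phi_+'(0)=\tfrac{Z}{2}\phi_+(0)$ and the explicit value $\phi_+(0)=\tfrac{3}{2}(1-Z^2/4)$ forces $|Z|=2$, which is excluded. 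Hence $n(\mathcal{L}_+^R)$ is constant on $(-2,0)\cup(0,2)$, and by continuity at $Z=0$ it equals the Morse index of the Neumann half-line restriction of the standard soliton linearization, namely $1$ (the ground state $-5/4$ being the unique negative eigenvalue in the even P\"oschl--Teller sector). Thus $n(\mathcal{L}_+^R)=1$ for all $0<|Z|<2$.

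Summing, $n(E_Z)=2$ when $Z>0$ and $n(E_Z)=1$ when $Z<0$. The principal obstacle is organizing the non-crossing step: the algebraic identities $\phi_+(0)=\tfrac{3}{2}(1-Z^2/4)$, $\phi_+'(0)=\tfrac{Z}{2}\phi_+(0)$ and $\phi_+''(0)=\phi_+(0)(1-\phi_+(0))$ --- which follow directly from the explicit $\sech^2$ formula and the stationary KdV equation --- are exactly what prevent eigenvalues from crossing zero on $(-2,2)\setminus\{0\}$, so the entire count hinges on pinning down this small algebraic fact in both the Robin and Dirichlet cases, together with fixing the sign of the first-order perturbation at $Z=0$ for the Dirichlet piece to determine the direction of motion.
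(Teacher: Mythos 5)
Your proof is correct, but it takes a genuinely different route from the paper. You exploit the reflection symmetry $\phi_{-}(x)=\phi_{+}(-x)$ to split $E_Z$ into its odd (Dirichlet) and even (Robin, $u'(0)=\tfrac{Z}{2}u(0)$) sectors on the half-line and count each separately: the Robin sector always carries exactly one negative eigenvalue (lower bound from the test function $\phi_+$ with $\langle \mathcal L_+\phi_+,\phi_+\rangle=-\int_0^\infty\phi_+^3\,dx<0$, upper bound from non-crossing plus continuity to the Neumann problem at $Z=0$), while the Dirichlet sector contributes $1$ for $Z>0$ and $0$ for $Z<0$ via the sign of $\lambda'(0)$. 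The paper instead perturbs the full two-edge operator $E_Z$ directly from the Kirchhoff limit $E_0\cong\mathcal L_0$, tracks the second eigenvalue $\Omega(Z)$ bifurcating from $\ker E_0=[\Phi_0']$ by Kato's theory of analytic families of type (B), and then propagates the count to all admissible $Z$ by a Riesz-projection continuation resting on the invertibility of $E_Z$ (Lemma \ref{ker}); note that the algebraic non-degeneracy $Z^2=4$ you isolate is exactly the computation in that lemma, so both arguments hinge on the same fact. Your decomposition has the merit of localizing the extra negative direction for $Z>0$ in the odd sector and of making the first-order perturbation coefficient fully explicit (the paper only cites Taylor's theorem and the analogous NLS computations), but it depends essentially on the parity symmetry and on the two edges carrying identical coefficients; the paper's machinery is the one that transfers to the balanced star graph with $n\geqq 2$ edges in Section 6 and to the non-identical-coefficient setting of Remark \ref{geral}, where no single reflection reduces the problem. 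Two small points of rigor you should make explicit if you write this up: the Robin family has a $Z$-dependent operator domain, so continuity of its discrete spectrum should be justified through the quadratic form $\int_0^\infty|u'|^2+(1-2\phi_+)|u|^2\,dx+\tfrac{Z}{2}|u(0)|^2$ on the fixed form domain $H^1(0,+\infty)$ (an analytic family of type (B)); and the non-crossing step should note that negative eigenvalues can neither escape to $-\infty$ (uniform lower semi-boundedness on compact $Z$-intervals) nor enter except through $0$, since $\sigma_{ess}=[1,+\infty)$.
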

 
  \begin{proof}  Our  strategy  is to use analytic  perturbation theory (see  \cite{AngGol17a, AngGol17b}). For this purpose we define the self-adjoint operator on $L^2(\mathbb R)$
\begin{equation}\label{limi}  
 \mathcal L_0=-\frac{d^2}{dx^2}+1-2 \phi_0,\quad D(\mathcal L_0)=H^2(\mathbb R)
 \end{equation}
 where $\phi_0$ denotes the classical one soliton solution for the KdV equation on the full line,
 \begin{equation}\label{soli}
\phi_{0}(x)=\frac{3}{2} sech^2\Big(\frac{1}{2} x\Big),\;\;x\in \mathbb R.
\end{equation}
 From classical Sturm-Liouville theory $Ker(\mathcal L_0)=[\phi'_{0}]$, $n(\mathcal L_0)=1$, $\sigma_{ess}(\mathcal L_0)=[1, +\infty)$ (see \cite{BerShu91}). Now, we consider the domain 
  \begin{equation}
 D(E_0)=\{(u_{-},  u_{+})\in H^2(\mathcal G) : u_{-}(0-)=u_{+}(0+), u'_{-}(0-)=u'_{+}(0+)\}.
 \end{equation}
 on which the following ``limit'' operator $E_0$ (associated with $E_Z$) is self-adjoint
  \begin{equation}
 E_0=\left(\begin{array}{cc} -\frac{d^2}{dx^2}+1-2\phi_{0, -}& 0 \\0 &-\frac{d^2}{dx^2}+1-2\phi_{0,+} \end{array}\right),
 \end{equation}
 with $\phi_{0, -}=\phi_{0}|_{(-\infty, 0)}$ and $\phi_{0, +}=\phi_{0}|_{(0,+\infty)}$. Thus, by considering the following unitary operator $\mathcal U: D(E_0) \to H^2(\mathbb R)$ defined for $u=(u_{-},  u_{+})\in E_0 $ by $\mathcal U(u)= \tilde{u}\in H^2(\mathbb R)$ where
 \begin{equation}
\tilde{u}=\left\{ \begin{array}{lll}
u_{-}(x),&\quad x<0\\
u_{+}(x),&\quad x>0\\
u_{+}(0+),&\quad x=0,
\end{array} \right.
 \end{equation} 
we obtain   $\sigma(E_0)=\sigma(\mathcal L_0)$ and  $\lambda\in \sigma_{disc}(E_0)$ if and only if  $\lambda\in \sigma_{disc}(\mathcal L_0)$ with the same multiplicity. Moreover, $\sigma_{ess}(E_0)=[1, +\infty)$. Therefore, $Ker(E_0)=[\Phi'_0]$, $\Phi_0=(\phi_{0, -}, \phi_{0, +})$,  and $n(E_0)=1$.
 
 Next, by using a similar strategy as in  \cite{AngGol17a, AngGol17b}  for studying the stability of  standing wave solutions for nonlinear Schr\"odinger models on star graphs, we have the following:
   \begin{enumerate}
 \item[i)]  $\phi_Z=(\phi_{-}, \phi_{+})\to \Phi_0$, as $Z\to 0$, in $H^1(\mathcal G)$.
 
 \item[ii)] The   family  $\{E_Z\}_{Z\in \mathbb R}$ represents a  real-analytic family of self-adjoint operators of type (B) in the sense of Kato (see \cite{kato}). 
 
  \item[iii)]  Since $E_Z$ converges to $E_0$ as $Z \to 0$ in the generalized sense, we obtain from  Theorem IV-3.16 from Kato \cite{kato} and from Kato-Rellich Theorem (\cite{RS}, Theorem XII.8) the existence of two analytic functions $\Omega, \Pi$ defined in a neighborhood of zero with $\Omega: (-Z_0, Z_0)\to \mathbb R$ and $\Pi: (-Z_0, Z_0)\to L^2(\mathcal{G})$ such that
$\Omega(0)=0$ and $\Pi(0)=\Phi'_0$. For all $Z\in (-Z_0, Z_0)$, $\Omega(Z)$ is the  simple isolated second eigenvalue of $E_Z$, and $\Pi(Z)$ is the associated eigenvector for $\Omega(Z)$. Moreover, $Z_0$ can be chosen small enough to ensure that for  $Z\in (-Z_0,Z_0)$  the spectrum of $E_Z$ in $L^2(\mathcal{G})$ is positive, except at most the  first two eigenvalues.

\item[iv)] From a ODE's analysis we have that if $\lambda$ is an simple eigenvalue for $E_Z$ then the eigenfunction associated is either even or odd. Therefore, since $\Pi(Z)\to \Phi'_0$, as  $Z\to 0$, and $ N\Phi_0$ is odd, we can see that $\Pi(Z)\in H^2(\mathbb R)$ is a odd function. Thus we obtain the relation
\begin{equation}\label{S5}
\langle N\phi_{Z}, \Pi(Z)\rangle \neq 0,\quad Z\approx 0.
\end{equation}
Indeed, since  $\lim_{Z\to 0}\langle N\phi_{Z}, \Pi(Z)\rangle=\|N\Phi_{0}\|^2>0$, we have  for $Z$ small property \eqref{S5}.  Thus, an continuation argument shows \eqref{S5} for all $Z \in (-\infty, \infty)$.

 \item[v)]  From Taylor's theorem we see that there exists $0<Z_1<Z_0$ such that $\Omega(Z)>0$ for any $Z\in (-Z_1,0)$, and $\Omega(Z)<0$ for  any $Z\in (0, Z_1)$.  Thus, in the space  $ L^2(\mathcal G)$ for $Z$ small,  we have  $n(E_Z)=1$ as $Z<0$, and $n(E_Z)=2$ as $Z>0$. 

  \item[vi)]  Recall that  $Ker(E_Z)=\{0\}$ for $Z\neq 0$. Thus, we define $Z_\infty$ by
$$
Z_\infty=\sup \{\tilde{Z}>0:  E_Z\;{\text{has exactly two negative eigenvalues for all}}\; Z\in (0,\tilde{Z})\}.
$$
Item $v)$ above implies that $Z_\infty$ is well defined and $Z_\infty\in (0,\infty]$. We claim that $Z_\infty=\infty$. Suppose that $Z_\infty< \infty$. Let $M=n(E_{Z_\infty})$ and $\Gamma$  be a closed curve (for example, a circle or a rectangle) such that $0\in \Gamma\subset \rho(E_{Z_\infty})$, and  all the negative eigenvalues of  $E_{Z_\infty}$ belong to the inner domain of $\Gamma$.  The existence of such $\Gamma$ can be  deduced from the lower semi-boundedness of the quadratic form associated to $E_{Z_{\infty}}$.
 
Next, from item $ii)$ above  follows  that there is  $\epsilon>0$ such that for $Z\in [Z_{\infty}-\epsilon, Z_{\infty}+\epsilon]$ we have $\Gamma\subset \rho(E_Z)$ and for $\xi \in \Gamma$,
$Z\to (E_Z-\xi I_d)^{-1}$ is analytic (see \cite{RS4}). Therefore, the existence of an analytic family of Riesz-projections $Z\to P(Z)$  given by 
$$
P(Z)=-\frac{1}{2\pi i}\oint_{\Gamma} (E_{Z}-\xi I_d)^{-1}d\xi
$$
implies  that $\dim(range (P(Z)))=\dim(range (P(Z_\infty)))=M$ for all $Z\in [Z_\infty-\epsilon, Z_{\infty}+\epsilon]$. Next, by definition of $Z_\infty$, $E_{Z_{\infty}-\epsilon} $ has two negative eigenvalues, and $M=2$, hence $E_Z$ has two negative eigenvalues for $Z\in (0,Z_{\infty}+\epsilon]$,  which contradicts with the definition of $Z_{\infty}$. Therefore,  $Z_{\infty}=\infty$.  

Analogously we can prove that $n(E_Z)=1$ in the case $Z<0$. This finishes the proof.
\end{enumerate}
 \end{proof}
 


The following lemma shows assumption $S_6)$ in the case $n(E)=2$. Indeed, by returning to the variable $\beta_{+}$ defining  the profiles $\phi_{\pm}$ in \eqref{soli6} with $\alpha_{+}=1$, we have that these profiles represent a differentiable family of stationary solutions a one-parameter $\omega=-\beta_{+}>0$ and we can denote it  dependence as $\phi_{Z,\omega}=(\phi_{-, \omega}, \phi_{+, \omega})$. From \eqref{soli1} we obtain after derivation in $\omega$ that
\begin{equation}\label{derivada}
\Big (-\frac{d^2}{dx^2}+\omega-2 \phi_{\pm, \omega}\Big) \Big(\frac{d}{d\omega} \phi_{\pm, \omega} \Big)=- \phi_{\pm, \omega}.
\end{equation}
Next, by denoting $\psi_\omega= (- \frac{d}{d\omega} \phi_{-, \omega}, -\frac{d}{d\omega} \phi_{+, \omega})$ is not difficult to see that $\psi\equiv \psi_\omega|_{\omega=1}\in D(E_Z)$ and so the expression $E_Z \psi=\phi_{Z}$ makes sense. Thus, with the notation above, we obtain the following result.

 \begin{lemma}\label{S_5}
   Let $Z\neq0$. The smooth curve of profiles $\omega \in (\frac{Z^2}{4},+\infty)\to \phi_{Z,\omega}$ with formula \eqref{soli6} satisfies for  $\psi\equiv -\frac{d}{d\omega} \phi_{Z,\omega}|_{\omega=1}$ the relations: $\psi \in D(E_Z)$,
 \begin{equation}\label{condi}
 E_Z \psi= \phi_Z,\quad\text{and},\quad \langle \psi, \phi_{Z} \rangle<0.
 \end{equation} 
\end{lemma}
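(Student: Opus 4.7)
The proof naturally splits into three verifications, and the plan is to obtain each by differentiating the explicit sech$^2$--family \eqref{soli6} with respect to the parameter $\omega = -\beta_+$.

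First, to show $\psi \in D(E_Z)$, I would use that $\phi_{Z,\omega} \in D(A_Z) \subset D(E_Z)$ for every admissible $\omega$, and that the map $\omega \mapsto \phi_{Z,\omega}$ is $C^1$ into $H^2(\mathcal{G})$ (immediate from the explicit formula), so $\psi = -\partial_\omega \phi_{Z,\omega}|_{\omega=1}$ lies in $H^2(\mathcal{G})$. Differentiating the continuity relation $\phi_{-,\omega}(0-) = \phi_{+,\omega}(0+)$ and the jump relation $\phi'_{+,\omega}(0+) - \phi'_{-,\omega}(0-) = Z\phi_{-,\omega}(0-)$ in $\omega$ transfers them verbatim to $\psi$, putting it into $D(E_Z)$. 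Second, the identity $E_Z\psi = \phi_Z$ is exactly \eqref{derivada} above evaluated at $\omega = 1$: differentiating $-\partial_x^2 \phi_{\pm,\omega} + \omega\phi_{\pm,\omega} - \phi^2_{\pm,\omega} = 0$ in $\omega$ yields $(-\partial_x^2 + \omega - 2\phi_{\pm,\omega})\partial_\omega \phi_{\pm,\omega} = -\phi_{\pm,\omega}$, so $\mathcal{L}_\pm \psi_\pm = \phi_\pm$ at $\omega = 1$, which is the coordinate form of $E_Z\psi = \phi_Z$.

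The third and only substantive step is the sign $\langle \psi, \phi_Z\rangle < 0$. Since $\psi = -\partial_\omega \phi_{Z,\omega}|_{\omega=1}$, one has
\begin{equation*}
\langle \psi, \phi_Z\rangle = -\tfrac{1}{2}\tfrac{d}{d\omega}\|\phi_{Z,\omega}\|^2_{L^2(\mathcal{G})}\Big|_{\omega=1},
\end{equation*}
so I would compute $\|\phi_{Z,\omega}\|^2$ explicitly and check that the derivative is strictly positive at $\omega = 1$. Using the symmetry $\phi_{-,\omega}(x) = \phi_{+,\omega}(-x)$, the substitution $y = \tfrac{\sqrt\omega}{2}x - \tanh^{-1}(Z/(2\sqrt\omega))$, and the primitive $\int \operatorname{sech}^4(y)\,dy = \tanh(y) - \tfrac{1}{3}\tanh^3(y)$, I expect
\begin{equation*}
\|\phi_{Z,\omega}\|^2 = 6\omega^{3/2} + \tfrac{9Z}{2}\omega - \tfrac{3Z^3}{8}.
\end{equation*}
Differentiating gives $9\omega^{1/2} + 9Z/2$, which at $\omega = 1$ equals $\tfrac{9}{2}(Z+2)$. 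The admissibility hypothesis $1 > Z^2/4$, i.e. $|Z| < 2$, forces $Z + 2 > 0$, and hence $\langle \psi, \phi_Z\rangle = -\tfrac{9(Z+2)}{4} < 0$.

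The only real obstacle is the explicit computation of $\|\phi_{Z,\omega}\|^2$; everything else is a bookkeeping of the parameter dependence. It is instructive that the sign is guaranteed precisely by the admissibility condition in Theorem~\ref{main} (which ensures $p_\pm$ is real), so no extra assumption on $Z$ is needed beyond what is already built into the existence of the tail/bump profile.
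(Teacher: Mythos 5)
Your proposal is correct, and I have checked the explicit computation: with $\alpha_+=1$, $\omega=-\beta_+$, the substitution $y=\tfrac{\sqrt\omega}{2}x-\tanh^{-1}\bigl(\tfrac{Z}{2\sqrt\omega}\bigr)$ and the primitive of $\operatorname{sech}^4$ indeed give $\|\phi_{Z,\omega}\|^2=6\omega^{3/2}+\tfrac{9Z}{2}\omega-\tfrac{3Z^3}{8}$, hence $\langle\psi,\phi_Z\rangle=-\tfrac{9}{4}(Z+2)<0$ under $1>Z^2/4$. Your treatment of the first two assertions (differentiating the boundary conditions and the elliptic equation in $\omega$) is exactly what the paper does in the paragraph preceding the lemma, where \eqref{derivada} is derived and $\psi\in D(E_Z)$ is asserted. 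The one substantive difference is in the sign of $\langle\psi,\phi_Z\rangle$: the paper's proof consists of a single citation to Proposition 3.19 (item (ii), $p=2$) of \cite{AngGol17b}, which supplies the positivity of $\tfrac{d}{d\omega}\|\phi_{Z,\omega}\|^2$ as a black box, whereas you carry out the integral explicitly. Your route is self-contained and yields the exact value of the inner product rather than just its sign; it also makes visible a point the paper leaves implicit, namely that $\tfrac{d}{d\omega}\|\phi_{Z,\omega}\|^2=9\sqrt\omega+\tfrac{9Z}{2}$ is positive precisely on the admissible range $\omega>Z^2/4$ (for $Z<0$ it would fail outside it), so the strict inequality is exactly as sharp as the existence condition for the tail profile. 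The only cost is that your argument is tied to the specific sech$^2$ formula, while the cited proposition is stated for a family of nonlinearities; for the present lemma this costs nothing.
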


\begin{proof}  
 From Proposition 3.19 in \cite{AngGol17b}  (item (ii), $p=2$) we have   for every $Z\in \mathbb R$, the relation 
 $$
  \frac{d}{d\omega}\Big[\int_{-\infty}^0 \phi^2_{-, \omega}(x)dx + \int_{0}^{+\infty} \phi^2_{+, \omega}(x)dx\Big]>0.
 $$
 Therefore $\langle \psi, \phi_{Z} \rangle<0$. This finishes the proof.
\end{proof}  

\begin{proof}[{\bf{Proof of Theorem \ref{main}}}]
Let $Z>0$.  From  Lemmas \ref{ker}, \ref{Morse} and \ref{S_5}, subsection 4.2 and \eqref{S5}, follows from Theorem \ref{crit} that   the profiles of type  bump for the KdV  are linear unstable. 

Let $Z<0$. From  Lemmas \ref{ker} and \ref{Morse}, Theorem \ref{crit2} implies the  linear instability of the tail profile for the KdV model. This finishes the proof.
\end{proof}

\section{Linear instability of the tail and  bump solutions for a balanced general star graph $\mathcal G$}

 The focus of this section will be consider the KdV model \eqref{kdv3} on a balanced metric star graph $\mathcal G$ with a structure $\bold E\equiv \bold E_{-}\cup \bold E_{+}$ where $|\bold E_{+}|=|\bold E_{-}|=n$, $n\geqq 2$, and with a $\delta$-interaction at the vertex. Thus by following the notation in \cite{MNS} and  Section 2 above, for $I=I_{n\times n}$ being the identity matrix of order  $n\times n$ we consider the matrix $L\equiv L_{3n\times 3n}:\mathbb G_{-}\to \mathbb G_{+}$ of order $3n\times 3n$, for $Z\in \mathbb R$ as
 \begin{equation}\label{L2}
 L\equiv  \left(\begin{array}{ccc} I & 0 & 0 \\Z I & I & 0 \\\frac{Z^2}{2} I & ZI & I \end{array}\right).
\end{equation}
 Thus, from \eqref{domain4} and with  $\alpha_{\pm} =(\alpha_ \bold e)_{\bold e\in \bold E_{\pm}}$, $\beta_{\pm} =(\beta_ \bold e)_{\bold e\in \bold E_{\pm}}$, we obtain  $L^tB_{+}L=B_{-}$ if and only if $\alpha_{+}=\alpha_{-}$ and $\beta_{+}=\beta_{-}$. Then, in this case (and only in this one) we obtain that $L$ is $(\mathbb G_{-}, \mathbb G_{+})$-unitary. Therefore, the operators $(H_Z, D(H_Z))$ defined by  
\begin{equation}\label{L_Z}
\left\{ \begin{array}{lll}
H_Zu=-A_0^*u=A_0 u\\
D(H_Z)=\Big\{u\in  D(A_0^*): L(u(0-), u'(0-), u''(0-))=(u(0+), u'(0+), u''(0+))\Big \}
\end{array} \right.
\end{equation}
are a skew-self-adjoint family of extension for $(A_0, D(A_0))$, where for  $u=(u_ \bold e)_{\bold e\in \bold E}\in D(H_Z)$ we have used the abbreviations 
$$
u(0-)=(u_ \bold e(0-))_{\bold e\in \bold E_{-}},\;\; u'(0-)=(u'_ \bold e(0-))_{\bold e\in \bold E_{-}},\;\; u''(0-)=(u''_ \bold e(0-))_{\bold e\in \bold E_{-}},
$$
 (similarly for the terms $u(0+)$, $u'(0+)$ and $u''(0\pm)$). Thus, we obtain the following system of conditions
\begin{equation}\label{condiZ}
\begin{aligned}
&u(0-)=u(0+), \quad u'(0+)- u'(0-)=Zu(0-),\;\;(\delta-\text{interaction type})\\
& \quad \quad \frac{Z^2}{2}u(0-)+Zu'(0-)=u''(0+)-u''(0-).
\end{aligned}
\end{equation}

Now, we build a family of continuous (at zero) stationary profiles for the KdV model on the balanced graph $\mathcal G$. By abusing of the notation, it consider the constants sequences $(\alpha_ \bold e)_{\bold e\in \bold E}=(\alpha_+)$, $(\beta_ \bold e)_{\bold e\in \bold E}=(\beta_+)$, with $\alpha_+>0$ and $\beta_+<0$. Thus we obtain a system of $n$-KdV models equals defined on all the line. Then, for $Z>0$ and  $-\frac{\beta_+}{\alpha_+}>\frac{Z^2}{4}$ we consider the half-soliton profile $\phi_{+}$ defined in \eqref{soli6}  and  $\phi_{-}(x)\equiv \phi_{+}(-x)$ for $x<0$. We define the constants sequences of functions  $u_{-}=(\phi_{-})_{\bold e\in \bold E_{-}}$,  $u_{+}=(\phi_{+})_{\bold e\in \bold E_{+}}$, and so  $U_{Z}\equiv U_{Z, \alpha_+, \beta_+}=(u_{-}, u_{+})$ represents a family of stationary bump profiles  for the KdV model in \eqref{kdv3} (see Figure 4) and satisfying the boundary conditions \eqref{condiZ}.
 \begin{figure}[h]\label{bumpgraph}
	\centering
	\includegraphics[angle=0,scale=1]{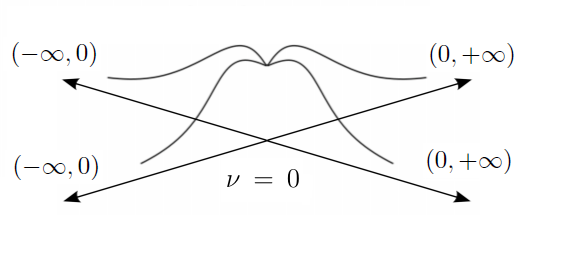}
	\caption{Bump profiles in a balanced star graphs with four edges}
\end{figure}
The case $Z<0$, $U_{Z}$ represents the corresponding family of stationary tail profiles (see Figure 5).

With the notations above, the main result of this section is the following.

\begin{theorem}\label{main2} Let $Z\neq 0$.  For $\alpha_{+}>0$ and  $0>\beta_{+}$, $-\frac{\beta_{+}}{\alpha_+}>\frac{Z^2}{4}$, we consider the profiles $\phi_{\pm}$  in \eqref{soli6}.  Define $U_{Z}= (\phi_{\bold e})_{\bold e\in \bold E}\in D(H_Z)$ with $\phi_{\bold e}= \phi_{-}$ for $\bold e\in \bold{E}_{-}$ and   $\phi_{\bold e}= \phi_{+}$ for $\bold e\in \bold{E}_{+}$. Then, 
 $$
\Phi_{Z}(x,t)=U_{Z}(x)
$$ 
defines a family  of linearly unstable stationary solutions 
 for the  Korteweg-de Vries model \eqref{kdv3}. 
   \end{theorem}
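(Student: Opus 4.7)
The plan is to reduce the linear instability question on the balanced graph with $n$ edges per side to the two-half-lines case handled in Theorem~\ref{main}, by exploiting the block structure of the vertex conditions encoded in the matrix $L$ of \eqref{L2}. Each $n\times n$ block of $L$ is a scalar multiple of $I_{n\times n}$, so the condition $L(u(0-),u'(0-),u''(0-))=(u(0+),u'(0+),u''(0+))$ decouples into $n$ independent copies of the scalar $\delta$-condition \eqref{domain7}, one for each pair $(e_{i,-},e_{i,+})$, $i=1,\dots,n$. Writing $L^2(\mathcal{G})=\bigoplus_{i=1}^{n}\mathcal{H}_i$ with $\mathcal{H}_i\equiv L^2(-\infty,0)\oplus L^2(0,+\infty)$ collecting the functions on the $i$-th edge pair, the skew-self-adjoint extension $H_Z$ leaves each $\mathcal{H}_i$ invariant and restricts there to the scalar extension $A_Z$ of Proposition~\ref{L}. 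Since the stationary profile $U_Z$ is the same on every edge of a given side, it sits diagonally in this decomposition, namely $U_Z=\bigoplus_{i=1}^{n}\phi_Z^{\mathrm{sc}}$ with $\phi_Z^{\mathrm{sc}}$ the scalar tail/bump profile of \eqref{soli6}.

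First I set up the associated Schr\"odinger operator $E_Z$ on $L^2(\mathcal{G})$ as the matrix-diagonal operator $\operatorname{diag}(-\partial_x^2-\beta_+-2\phi_{\pm})$, with domain prescribed by the first-order part of \eqref{condiZ}: $u_{i,-}(0-)=u_{i,+}(0+)$ and $u'_{i,+}(0+)-u'_{i,-}(0-)=Zu_{i,-}(0-)$ for each $i$. Because these conditions are also componentwise, $E_Z$ leaves each $\mathcal{H}_i$ invariant and acts there as the scalar $E_Z^{\mathrm{sc}}$ of Section~5. The assumptions of Section~4 then follow from their scalar counterparts: $S_1$ from the construction of $H_Z$ in \eqref{L_Z}; $S_3$ from $D(H_Z)\subset D(E_Z)$, immediate from comparing the two domains; $S_4$ from the per-edge integration by parts in Remark~4.2, item~(2), done inside each $\mathcal{H}_i$ and summed over $i$, using the even property $\phi'_+(0+)=\tfrac{Z}{2}\phi_+(0+)$; $S_7$ from the block-diagonal form of $N$. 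The spectral assumption $S_5$ is inherited summand-wise from Lemmas~\ref{ker} and~\ref{Morse}: $\ker(E_Z)=\{0\}$, $E_Z$ is invertible, and the Morse indices multiply, giving $n(E_Z)=2n$ for $Z>0$ and $n(E_Z)=n$ for $Z<0$. Finally, $S_6$ is produced by setting $\psi\equiv -\partial_\omega U_{Z,\omega}|_{\omega=1}$ for the smooth one-parameter family $\omega=-\beta_+\mapsto U_{Z,\omega}$ of stationary profiles; one then has $E_Z\psi=U_Z$ componentwise and $\langle\psi,U_Z\rangle=n\langle\psi^{\mathrm{sc}},\phi_Z^{\mathrm{sc}}\rangle<0$ by Lemma~\ref{S_5}.

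The conclusion comes directly from the decomposition: since $N$ and $E_Z$ both preserve each $\mathcal{H}_i$ and act there as their scalar Section~5 counterparts, Theorem~\ref{main} applied inside $\mathcal{H}_1$ (say) yields a real $\lambda>0$ and a nonzero eigenfunction $\psi^{*}$ with $N^{\mathrm{sc}}E_Z^{\mathrm{sc}}\psi^{*}=\lambda\psi^{*}$. Zero-extending $\psi^{*}$ to the remaining $\mathcal{H}_i$ produces an eigenfunction of $NE_Z$ on $L^2(\mathcal{G})$ at the same positive eigenvalue, which is exactly the growing-mode solution of \eqref{stat6} proving linear instability of $\Phi_Z=U_Z$. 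The main obstacle, and the reason this is not a one-line corollary, is that for $Z>0$ and $n\geq 2$ the Morse index $n(E_Z)=2n\geq 4$ exceeds what Theorem~\ref{crit} as stated can absorb, and the subspace $[U_Z]^{\perp}$ is too small to let the Krasnoselskii cone argument of Section~4.2 go through directly on the whole graph. The edge-pair decomposition above -- whose very existence relies on the Krein-von~Neumann extension theory used in Section~6 to construct $H_Z$ with a $\delta$-vertex condition -- is precisely what reduces the spectral problem back to the Morse-index-2 regime covered by Theorem~\ref{crit} via Theorem~\ref{main}.
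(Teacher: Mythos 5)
Your argument is correct, but it follows a genuinely different route from the paper's, and the difference is worth recording. You exploit the fact that every block of the matrix $L$ in \eqref{L2} is a scalar multiple of $I_{n\times n}$, so that the vertex conditions \eqref{condiZ} decouple into $n$ independent copies of the scalar $\delta$-condition of Proposition \ref{L}; consequently $H_Z$, the componentwise Schr\"odinger operator $E_Z$ you introduce, and $N$ are all direct sums over edge pairs, the profile $U_Z$ sits diagonally, and Theorem \ref{main} applied on a single pair followed by zero-extension yields a genuine positive eigenvalue of $NE_Z$ on the whole graph. All the scalar hypotheses you need are exactly those already verified in Section 5 (Lemmas \ref{ker}, \ref{Morse}, \ref{S_5} and \eqref{S5}), so nothing further has to be proved. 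The paper proceeds differently: it intersects $D(H_Z)$ with the vertex-continuity class $\mathcal C$ of \eqref{conti} (whose invariance under the group is Proposition \ref{group28}), replaces your componentwise domain by the Kirchhoff-type domain $D_{Z,\delta}\cap\mathcal C$ of \eqref{tail3} --- a self-adjoint extension of the deficiency-index-one operator $\mathcal F_0$ of Proposition \ref{F_0} --- and uses Proposition \ref{semibounded} together with analytic perturbation from the Kirchhoff soliton operator $\mathcal E_0$ (Lemma \ref{Morse12}, Proposition \ref{Morse11}) to bring the Morse index on the symmetric subspace $L^2_n(\mathcal G)$ down to $2$ (resp.\ $1$), after which Theorems \ref{crit}--\ref{crit2} apply directly on the graph. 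What your shortcut buys is brevity: you bypass the extension-theoretic machinery of the Appendix and Section 6 entirely. What it loses is that your growing mode is supported on one edge pair and is therefore discontinuous at the vertex when viewed across the remaining edges, whereas the paper's growing mode lies in $\mathcal C\cap L^2_n(\mathcal G)$ and exhibits instability within the class of vertex-continuous perturbations that the paper emphasizes and to which it restricts the linearized dynamics; the paper's method also survives vertex couplings that genuinely mix the edges, where no pair decomposition exists. One small mischaracterization: the decoupling you use comes from the block form of $L$ and the Krein-space unitarity criterion of \cite{MNS}, not from the Krein--von Neumann theory, which the paper invokes only to construct and classify the self-adjoint realizations of $\mathcal E_Z$.
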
 
   
  For the general case of the sequences $(\alpha_ \bold e)_{\bold e\in \bold E}$ and  $(\beta_ \bold e)_{\bold e\in \bold E}$, in Remark \ref{geral} below we establish  the necessary conditions for obtaining the linear instability of the corresponding bump and tail profiles.

The linear instability of the {\it continuous (at zero) tail and bump profiles} $U_{Z}$, $Z\neq 0$, it will be a consequence of 
Theorems \ref{crit}-\ref{crit2} with a framework determined by the   space $D(H_Z)\cap \mathcal C$ where
  \begin{equation}\label{conti} 
 \mathcal C =\{(u_ \bold e)_{\bold e\in \bold E}\in L^2(\mathcal G): u_{1, -}(0-)=...=u_{n, -}(0-)=u_{1, +}(0+)=...=u_{n, +}(0+)\},
  \end{equation}
 represents the set of elements of $L^2(\mathcal G)$ continuous at the graph-vertex $\nu=0$. Thus, by following the notation in Section 5 ($(\alpha_{\bold e})_{\bold e\in \bold E}=(1)_{\bold e\in \bold E}$,  $(\beta_{\bold e})_{\bold e\in \bold E}=(-1)_{\bold e\in \bold E}$, without loss of generality) we start our analysis  by considering the $2n\times 2n$-matrix derivate operator $N$ in \eqref{stat10} and the $2n\times 2n$-matrix Schr\"odinger operator 
  \begin{equation}\label{tail1}
 \mathcal E_Z=\left(\begin{array}{cc} \mathcal L_{Z, -}& 0 \\0 &  \mathcal L_{Z, +} \end{array}\right).
 \end{equation}
 with  
 \begin{equation}\label{tail2}  
  \mathcal L_{Z,\pm}=\text{diag}\Big(-\frac{d^2}{dx^2}+1 -2\phi_{\pm},..., -\frac{d^2}{dx^2}+ 1 -2\phi_{\pm} \Big),
\end{equation} 
 being $n\times n$-diagonal matrices.
 
 From the proof of Lemma \ref{Morse12} below,  $\mathcal E_Z$ is a family  of self-adjoint operators with domain $D(\mathcal E_Z)=D_{Z,\delta}\cap \mathcal C\subset H^2(\mathcal{G})$ with
  \begin{equation}\label{tail3} 
u\in D_{Z,\delta} \Leftrightarrow  u(0-)=u(0+), \;\;\sum_{\bold e\in \bold E_{+}}u'_{\bold e}(0+)- \sum_{\bold e\in \bold E_{-}}u'_{\bold e}(0-)=Znu_{1, +}(0+).
 \end{equation} 
 
 It is immediate from \eqref{condiZ} that $D(H_Z)\cap \mathcal C\subset D(\mathcal E_Z)$ and so assumption $S_3)$ holds. From Remark 4.2-item 2) we obtain again assumption $S_4)$. Assumption $S_7)$ is immediate by the continuity property at zero of each element in $D(\mathcal E_Z)$. Moreover, from Proposition \ref{group28} (Appendix) we have that subspace $D(H_Z)\cap \mathcal C$ is invariant  by the unitary group $\{W(t)\}_{t\in \mathbb R}$ generated by $H_Z$.
 
 The proof of the following result follows the same strategy as in Lemma \ref{ker}.
 
  \begin{lemma}\label{Morse10} Let $Z\neq  0$ and the operator $\mathcal E_Z:D(\mathcal E_Z)\to L^2(\mathcal{G})$ defined in \eqref{tail1} with $D(\mathcal E_Z)=D_{Z,\delta}\cap \mathcal C$. Then,
$ \mathcal E_Z$ is invertible with $\sigma_{ess}(\mathcal E_Z)=[1,+\infty)$.
 \end{lemma}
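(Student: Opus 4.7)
The argument is a direct extension of the two half-line analysis of Lemma \ref{ker} to the balanced $n$-edge setting, with the extra algebraic wrinkles coming from the fact that each half-line now contributes independently to the vertex balance. I would start with the kernel: fix $u = (u_{\bold e})_{\bold e \in \bold E} \in D(\mathcal{E}_Z)$ with $\mathcal{E}_Z u = 0$. On every edge, $u_{\bold e}$ satisfies the same second-order ODE solved by $\phi'_{\pm}$; by classical Sturm--Liouville theory on the half-line (cf. \cite{BerShu91}), the only $L^2$-solutions are scalar multiples, so $u_{\bold e} = a_{\bold e}\phi'_{-}$ for $\bold e \in \bold E_-$ and $u_{\bold e} = b_{\bold e}\phi'_{+}$ for $\bold e \in \bold E_+$.

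Next, I would use membership in $\mathcal{C}$ and the symmetry $\phi_{-}(x) = \phi_{+}(-x)$, which gives $\phi'_{-}(0-) = -\phi'_{+}(0+)$ and $\phi''_{-}(0-) = \phi''_{+}(0+)$. Since $\phi'_{+}(0+) = \tfrac{Z}{2}\phi_{+}(0+) \neq 0$ for $Z \neq 0$ (this follows from the vertex condition applied to the stationary profile itself), continuity at the vertex forces $a_{\bold e} \equiv a$ on all negative edges, $b_{\bold e} \equiv b$ on all positive edges, and $a = -b$. Plugging into the $\delta$-condition \eqref{tail3} yields
\begin{equation*}
n(b-a)\,\phi''_{+}(0+) \;=\; Zn\, b\, \phi'_{+}(0+) \;=\; \tfrac{Z^2 n b}{2}\,\phi_{+}(0+).
\end{equation*}
With $a = -b$ this reduces to $4b\,\phi''_{+}(0+) = Z^2 b\, \phi_{+}(0+)$. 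Assuming $b \neq 0$, the elliptic identity $\phi''_{+}(0+) = \phi_{+}(0+) - \phi_{+}^2(0+)$ (from \eqref{soli1} with $\alpha_+ = 1$, $\beta_+ = -1$) together with the explicit value $\phi_{+}(0+) = \tfrac{3}{2}(1 - \tfrac{Z^2}{4})$ obtained from \eqref{soli6} via $\mathrm{sech}^2(\tanh^{-1}(Z/2)) = 1 - Z^2/4$ collapses the equation to $\tfrac{3}{2} = 1$, a contradiction. Hence $b = 0$, so $u \equiv 0$ and $\ker \mathcal{E}_Z = \{0\}$.

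For the essential spectrum, I would proceed by Weyl's theorem (Theorem XIII.14 in \cite{RS}). Define the ``free'' reference operator $\mathcal{E}_Z^{(0)}$ obtained by setting $\phi_{\pm} \equiv 0$ on the same domain $D_{Z,\delta} \cap \mathcal{C}$; it is a self-adjoint $\delta$-type realization of $-\tfrac{d^2}{dx^2} + 1$ on the star graph, with essential spectrum $[1, +\infty)$ (since the vertex condition is a finite-rank perturbation of the decoupled half-line operators, each of which has essential spectrum $[1, +\infty)$). The potentials $2\phi_{\pm}$ decay exponentially to zero at $\mp\infty$, so multiplication by $2\phi_{\bold e}$ is $(-\tfrac{d^2}{dx^2} + 1)$-compact on each edge; therefore $\mathcal{E}_Z - \mathcal{E}_Z^{(0)}$ is relatively compact and $\sigma_{ess}(\mathcal{E}_Z) = \sigma_{ess}(\mathcal{E}_Z^{(0)}) = [1,+\infty)$. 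Combined with the trivial kernel, $0$ is neither an eigenvalue nor in the essential spectrum, so $\mathcal{E}_Z$ is invertible.

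The main obstacle I anticipate is purely bookkeeping: keeping track of the three distinct uses of boundary identities at the vertex (the symmetry relations between $\phi_{\pm}$ and their derivatives, the even-type identity $\phi'_{+}(0+) = \tfrac{Z}{2}\phi_{+}(0+)$ satisfied by the stationary profile, and the $\delta$-balance satisfied by $u$) so that the kernel computation cleanly collapses to the algebraic contradiction $\tfrac{3}{2} = 1$ rather than to a spurious condition dependent on $Z$.
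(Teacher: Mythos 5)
Your proposal is correct and follows exactly the strategy the paper intends: the paper's proof of this lemma is just the remark that it ``follows the same strategy as in Lemma \ref{ker}'', and your kernel computation (reduction to multiples of $\phi'_{\pm}$ on each edge, continuity in $\mathcal C$ forcing a single pair of constants $a=-b$, and the $\delta$-balance forcing $\phi''_{+}(0+)=\tfrac{Z^2}{4}\phi_{+}(0+)$) together with the Weyl-theorem argument for $\sigma_{ess}=[1,+\infty)$ is precisely the $n$-edge version of that proof. The only cosmetic difference is that your final contradiction is stated as $\tfrac{3}{2}=1$ after dividing by $1-\tfrac{Z^2}{4}>0$, whereas Lemma \ref{ker} phrases it as $Z^2=4$ contradicting $1>\tfrac{Z^2}{4}$; these are equivalent.
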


 \begin{proposition}\label{Morse11} Let $\mathcal E_Z:D(\mathcal E_Z)\to L^2(\mathcal{G})$ defined in \eqref{tail1} with $D(\mathcal E_Z)=D_{Z,\delta}\cap \mathcal C$. Define the following closed subspace   on $L^2(\mathcal G)$,
 $$
 L^2_{n}(\mathcal G)=\{u=(u_{\bold e})_{\bold e\in \bold E}: u_{\bold e}=f, \;\;\text{for all}\; \bold e\in \bold E_{-}, u_{\bold e}=g, \;\;\text{for all}\; \bold e\in \bold E_{+}\} 
$$
 Then, $n(\mathcal E_Z|_{L^2_{n}(\mathcal G)})=2$, for $Z>0$, and $n(\mathcal E_Z|_{L^2_{n}(\mathcal G)})=1$, for $Z<0$.
 \end{proposition}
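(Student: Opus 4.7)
The plan is to exploit the rotational symmetry encoded in the subspace $L^2_n(\mathcal G)$ and reduce the spectral problem for $\mathcal E_Z$ restricted to $L^2_n(\mathcal G)$ to the two-half-line operator $E_Z$ already analyzed in Lemma \ref{Morse}; the Morse index then follows immediately from that lemma.

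First I would set up the reduction. For $u=(u_{\bold e})_{\bold e\in \bold E}\in L^2_n(\mathcal G)\cap D(\mathcal E_Z)$ there exist, by definition of $L^2_n(\mathcal G)$, functions $f\in H^2(-\infty,0)$ and $g\in H^2(0,+\infty)$ with $u_{\bold e}=f$ for every $\bold e\in \bold E_{-}$ and $u_{\bold e}=g$ for every $\bold e\in \bold E_{+}$. The continuity condition $u(0-)=u(0+)$ in \eqref{tail3} collapses to $f(0-)=g(0+)$, while the $\delta$-vertex condition
\[
\sum_{\bold e\in \bold E_{+}}u'_{\bold e}(0+)-\sum_{\bold e\in \bold E_{-}}u'_{\bold e}(0-)=Zn\,u_{1,+}(0+)
\]
becomes $n\bigl(g'(0+)-f'(0-)\bigr)=Znf(0-)$, i.e.\ $g'(0+)-f'(0-)=Zf(0-)$. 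These are exactly the conditions defining $D(E_Z)$ in \eqref{L_i}. Therefore the map $J\colon L^2_n(\mathcal G)\to L^2(-\infty,0)\oplus L^2(0,+\infty)$ defined by $J(u)=(f,g)$ satisfies $\|Ju\|^2=\tfrac{1}{n}\|u\|^2$, so $\sqrt{n}\,J$ is a unitary equivalence between the restricted operator $\mathcal E_Z|_{L^2_n(\mathcal G)}$ and $E_Z$: on the symmetric subspace the diagonal $n$-fold block structure of $\mathcal L_{Z,\pm}$ in \eqref{tail2} collapses to a single copy of $-\partial_x^2+1-2\phi_{\pm}$ acting on $f$ and on $g$.

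With this identification in place, the spectra (and multiplicities) of $\mathcal E_Z|_{L^2_n(\mathcal G)}$ and $E_Z$ coincide. Applying Lemma \ref{Morse} then yields at once $n(\mathcal E_Z|_{L^2_n(\mathcal G)})=n(E_Z)=2$ for $Z>0$ and $n(\mathcal E_Z|_{L^2_n(\mathcal G)})=n(E_Z)=1$ for $Z<0$. The only point that requires a careful verification is the invariance statement $\mathcal E_Z\bigl(L^2_n(\mathcal G)\cap D(\mathcal E_Z)\bigr)\subset L^2_n(\mathcal G)$, which guarantees that the restriction is itself well defined and self-adjoint on its natural domain. This is immediate, however, because the differential action is the same operator on every edge and the $\delta$-condition treats all incoming (respectively outgoing) half-lines symmetrically; any vector constant across each family of edges is therefore mapped to another such vector. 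No analytic perturbation argument is needed here: the full force of the reduction comes from the edge-permutation symmetry of the $\delta$-interaction.
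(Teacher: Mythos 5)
Your proposal is correct, and it takes a genuinely different route from the paper. The paper proves Proposition \ref{Morse11} by re-running on the balanced graph the whole perturbative machinery of Section 5: it first analyzes the Kirchhoff limit operator $\mathcal E_0$ via the extension theory of symmetric operators (Lemma \ref{Morse12}, computing deficiency indices of $\mathcal F_0$ and invoking Proposition \ref{semibounded} to get $n(\mathcal E_0)=1$), then applies analytic perturbation theory in $Z$ (Lemma \ref{eigenE_Z} and Proposition \ref{morseE_0}) and finally a Riesz-projection continuation argument to reach all $Z\neq 0$. You instead observe that the edge-permutation symmetry of the $\delta$-coupling makes the orthogonal projection onto $L^2_n(\mathcal G)$ commute with $\mathcal E_Z$, and that the restriction $\mathcal E_Z|_{L^2_n(\mathcal G)}$ is unitarily equivalent (via $\sqrt{n}\,J$) to the two-half-line operator $E_Z$ of Section 5, whose Morse index is already known from Lemma \ref{Morse}; your verification that the vertex conditions \eqref{tail3} collapse exactly to \eqref{L_i} on the symmetric subspace is the key computation and it checks out (including surjectivity of $J$ onto $D(E_Z)$, which you should state explicitly so that the equivalence is between the operators with their domains, not just their formal actions). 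For the proposition as stated your argument is shorter and more elementary, and it makes transparent \emph{why} the answer is independent of $n$. What the paper's heavier route buys is information your reduction cannot give: the extension-theoretic count bounds the negative spectrum of $\mathcal E_Z$ on the \emph{full} space $L^2(\mathcal G)$ (cf.\ the remark following the proposition that $n(\mathcal E_Z)=1$ on all of $L^2(\mathcal G)$ for $Z<0$), and the intermediate objects ($\mathcal F_0$, its adjoint, the Riesz projections) are reused in the Appendix and in Remark \ref{geral} for non-constant coefficient sequences, where the symmetric-subspace reduction to a single two-edge problem is no longer available.
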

 
 The proof of  Proposition \ref{Morse11} will based in the analytic perturbation theory and the extension theory of symmetric operators. We note that in the case $Z<0$ (tail case) can be given an argument based exclusively in the extension theory of symmetric operators and to be obtained that $n(\mathcal E_Z)=1$ on $L^2(\mathcal G)$.
  \begin{figure}
 	\centering
 	\includegraphics[angle=0,scale=1]{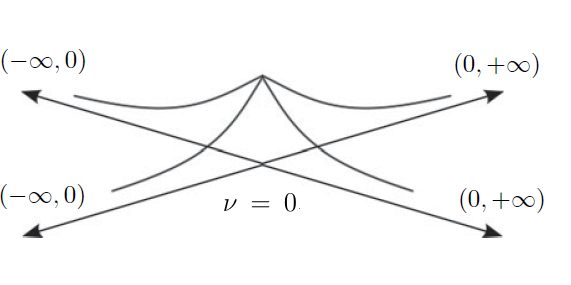}
 	\caption{Tail profiles in a balanced star graphs with four edges.}
 	\label{fig:
 		tail11}
 \end{figure}

 The proof of Proposition \ref{Morse11} will be divide in several lemmas.
 
  \begin{lemma}\label{Morse12} Define the self-adjoint matrix Schr\"odinger operator in $L^2(\mathcal G)$ with Kirchhoff's type condition at $\nu=0$
   \begin{equation}\label{Morse 13}
 \mathcal E_0=\left(\begin{array}{cc} \mathcal L_{0, -}& 0 \\0 &  \mathcal L_{0, +} \end{array}\right)
 \end{equation}
 where 
 \begin{equation}\label{tail2}  
  \mathcal L_{0,\pm}=\text{diag}\Big(-\frac{d^2}{dx^2}+ 1 -2\phi_{0},..., -\frac{d^2}{dx^2}+ 1 -2\phi_{0} \Big),
\end{equation} 
 being $n\times n$-diagonal matrices, $\phi_0$ the soliton profile defined in \eqref{soli}, and 
\begin{equation}\label{E_0}
D(\mathcal E_0 )= \{u\in H^2(\mathcal G) \cap \mathcal C: u(0-)=u(0+),  \sum_{\bold e \in \bold E_{+}} u'_{\bold e}(0+)-\sum_{\bold e \in \bold E_{-}} u'_{\bold e}(0-)=0\}.
\end{equation} 
  \begin{enumerate}
 \item[1)] In the space $L^2_{n}(\mathcal G)$ we have $Ker(\mathcal E_0)=[\Phi'_0]$, where $\Phi'_0=(\phi_0')_{\bold e\in \bold E}$.
 \item[2)] The operator $(\mathcal E_0, D(\mathcal E_0))$ has one simple negative eigenvalue in $L^2(\mathcal G)$. Moreover, we have also $n(\mathcal E_0|_{L^2_{n}(\mathcal G)})=1$.
  \item[3)]  The rest of the spectrum of $\mathcal E_0 $ is positive and bounded away from zero.
  \end{enumerate}
    \end{lemma}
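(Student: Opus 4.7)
The plan is to exploit the full symmetry of the balanced star graph in order to diagonalize $\mathcal E_0$ along the decomposition $L^2(\mathcal G) = L^2_n(\mathcal G) \oplus L^2_n(\mathcal G)^\perp$. On the symmetric sector $\mathcal E_0$ will be shown to be unitarily equivalent to the classical linearized KdV operator $\mathcal L_0$ on $\mathbb R$ introduced in \eqref{limi}; on the complementary sector it will split as a direct sum of half-line Dirichlet operators. All three items will then follow from the classical Sturm-Liouville theory of $\mathcal L_0$ already invoked in the proof of Lemma \ref{Morse}.

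First I would introduce the isometry $\mathcal V : L^2_n(\mathcal G) \to L^2(\mathbb R)$ sending a vector whose negative-edge components are all equal to a common function $f$ and whose positive-edge components are all equal to a common function $g$ to the gluing $\sqrt{n}\,(f\chi_{(-\infty,0)} + g\chi_{(0,+\infty)})$. The prefactor $\sqrt n$ corrects for the $n$-fold repetition and makes $\mathcal V$ unitary. For $u$ of this form, the continuity condition $u\in\mathcal C$ forces $f(0-) = g(0+)$, and the Kirchhoff balance $\sum_{\bold e \in \bold E_+} u'_{\bold e}(0+) - \sum_{\bold e \in \bold E_-}u'_{\bold e}(0-) = n g'(0+) - n f'(0-) = 0$ forces $f'(0-) = g'(0+)$; together these place the gluing in $H^2(\mathbb R)$. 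Hence $\mathcal V$ takes $D(\mathcal E_0)\cap L^2_n(\mathcal G)$ onto $H^2(\mathbb R) = D(\mathcal L_0)$ and intertwines $\mathcal E_0$ with $\mathcal L_0$. From the analysis recalled in Lemma \ref{Morse} we know that $\ker\mathcal L_0 = [\phi'_0]$, that $\mathcal L_0$ has a single simple negative eigenvalue, that $\sigma_{ess}(\mathcal L_0) = [1,+\infty)$, and that the remaining discrete spectrum is separated from zero. Pulling these facts back through $\mathcal V^{-1}$ proves item (1), the $L^2_n(\mathcal G)$ part of item (2), and item (3) on the symmetric sector.

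Next I would verify that $L^2_n(\mathcal G)^\perp$ is also $\mathcal E_0$-invariant and that the restricted operator decouples. The orthogonal complement consists of those $(u_{\bold e})_{\bold e}$ whose edge-wise sums vanish identically on each side. An element $u \in L^2_n(\mathcal G)^\perp \cap D(\mathcal E_0)$ has all $2n$ boundary values $u_{\bold e}(0\pm)$ equal by $\mathcal C$ and yet summing to zero on each side, hence $u_{\bold e}(0\pm) = 0$ for every $\bold e$; the Kirchhoff balance then becomes tautological since $\sum_{\bold e \in \bold E_\pm}u'_{\bold e}(0\pm)=0$ is already implied by the zero-sum constraint on each side. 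Consequently, on $L^2_n(\mathcal G)^\perp$ the operator $\mathcal E_0$ is the direct sum of the $2n$ half-line Schr\"odinger operators $-d^2/dx^2 + 1 - 2\phi_{0,\pm}$ endowed with a homogeneous Dirichlet condition at the vertex.

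Finally I would argue that each of these Dirichlet operators is nonnegative. Because $\phi_0$ is even, $\phi'_0(0) = 0$, so $\phi'_0$ restricted to either half-line lies in the Dirichlet domain and annihilates the corresponding operator; moreover $\phi'_0$ is strictly signed on each half-line and therefore has no interior zero. Sturm-Liouville oscillation theory (see \cite{BerShu91}) then identifies this eigenfunction as the ground state, so the spectrum of each half-line Dirichlet operator lies in $[0,+\infty)$ with essential part $[1,+\infty)$. Therefore $\mathcal E_0|_{L^2_n(\mathcal G)^\perp}$ contributes no negative eigenvalues and $n(\mathcal E_0) = n(\mathcal E_0|_{L^2_n(\mathcal G)}) + 0 = 1$, completing item (2). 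I expect the delicate point to be the clean splitting of the domain: one must combine the definition of $\mathcal C$ with the Kirchhoff condition and the zero-mean constraint to argue that elements of the antisymmetric sector automatically inherit homogeneous Dirichlet conditions at the vertex, which is what decouples the problem there.
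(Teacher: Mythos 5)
Your proof is correct, but it follows a genuinely different route from the paper's. The paper proves item 1) by a direct ODE computation, and item 2) by Krein--von Neumann extension theory: it introduces the minimal symmetric operator $\mathcal F_0$ with Dirichlet-plus-Kirchhoff conditions, computes its deficiency indices $n_\pm=1$ (Proposition \ref{F_0}), shows the perturbed operator $\tilde{\mathcal F}_0=\mathcal F_0+\mathcal B_0$ is non-negative via the Sturm-type factorization \eqref{identity'}, invokes Proposition \ref{semibounded} to get $n(\mathcal E_0)\leqq 1$, and finally exhibits the test direction $\langle \mathcal E_0\Phi_0,\Phi_0\rangle<0$; item 3) is dispatched by Weyl's theorem. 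You instead reduce everything to the edge-permutation symmetry of the balanced graph: the symmetric sector is unitarily equivalent to $\mathcal L_0$ on $\mathbb R$ (the same gluing device the paper uses for $E_0$ in the proof of Lemma \ref{Morse}), and the orthogonal complement decouples into half-line Dirichlet operators that are non-negative because $\phi_0'$ is a nodeless zero-energy Dirichlet eigenfunction on each half-line. Your key domain-splitting step checks out: for $u\in D(\mathcal E_0)\cap L^2_n(\mathcal G)^\perp$ the continuity constraint plus the zero-sum constraint forces all vertex values to vanish, and the Kirchhoff condition is then automatic since the zero-sum identity differentiates. What your approach buys is elementarity (no deficiency indices, no Proposition \ref{semibounded}) and sharper spectral information; in particular it exposes a $2(n-1)$-dimensional additional kernel of $\mathcal E_0$ in the antisymmetric sector (spanned by vectors $(c_{\bold e}\phi_0')$ with vanishing edge-sums), which explains why the paper states item 1) only in $L^2_n(\mathcal G)$ and why item 3) on all of $L^2(\mathcal G)$ must be read modulo the full kernel rather than modulo $[\Phi_0']$ alone. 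What the paper's extension-theoretic route buys is robustness: it is the same template used for $\mathcal E_Z$ with $Z\neq 0$ and for non-identical coefficient sequences (Remark \ref{geral}), where the edge-permutation symmetry you exploit is no longer available, and it bounds $n(\mathcal E_0)$ on the whole space in one stroke without treating the antisymmetric sector separately.
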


\begin{proof} The proof of item $1)$  follows from a similar analysis as in Lemma \ref{ker}. Indeed, let $v=(v_{\bold e})_{\bold e\in \bold E}\in Ker(\mathcal E_0) \cap L^2_{n}(\mathcal G)$, then
\begin{equation}\label{KerE_0}
-v''_{\bold e}+ v_{\bold e}-2\phi_0v_{\bold e}=0,\quad \bold e\in \bold E.
\end{equation}
Then, $v_{\bold e}=c_{\bold e} \phi'_0$ for $e\in \bold E$ and so $v_{\bold e}(0-)=v_{\bold e}(0+)=0$. Now, since $v\in L^2_{n}(\mathcal G)$, we obtain for $\bold e\in \bold E_{-}$ that $v_{\bold e}=c_0 \phi'_0$ with $c_0=c_{\bold e} $, and  for $\bold e\in \bold E_{+}$ that $v_{\bold e}=c_1 \phi'_0$ with $c_1=c_{\bold e} $.
Then from \eqref{E_0} we obtain $nc_1\phi''_0(0)=nc_0\phi''_0(0)$. Therefore, $v=c_0 \Phi'_0$.

For item $2)$, we will used extension theory for symmetric operators. Indeed,  we consider   the $2n\times 2n$-diagonal matrix   operator
   \begin{equation}\label{F0}
 \mathcal F_0=diag\Big(-\frac{d^2}{dx^2},..., -\frac{d^2}{dx^2}\Big).
 \end{equation}
 with domain
  \begin{equation}\label{F1}
D(\mathcal F_0)= \{u\in H^2(\mathcal G) : u(0-)=u(0+)=0,  \sum_{\bold e \in \bold E_{+}} u'_{\bold e}(0+)-\sum_{\bold e \in \bold E_{-}} u'_{\bold e}(0-)=0\}.
\end{equation}
Then $( \mathcal F_0, D( \mathcal F_0))$ represents a closed symmetric operator densely defined on $L^2(\mathcal G)$ (we note that $\bigoplus\limits_{\bold e\in  \bold E_{-} }C_c ^{\infty}(-\infty, 0)  \oplus \bigoplus\limits_{\bold e\in  \bold E_{+} }C_c ^{\infty}(0, +\infty)\subset D(\mathcal F_0)$).  Moreover, the
adjoint operator $( \mathcal F_0^*, D( \mathcal F_0^*))$ is given by (see Proposition \ref{F_0} in Appendix below)
  \begin{equation}\label{F*}
 \mathcal F_0^*=\mathcal F_0, \quad D( \mathcal F_0^*)=\{u\in H^2(\mathcal G) : u\in \mathcal C\}.
 \end{equation}
Next, from \eqref{F*}, the deficiency indices  for $( \mathcal F_0, D( \mathcal F_0))$ are $n_{\pm}( \mathcal F_0)=1$. Then, from the Krein-von Neumann extension theory for symmetric operators (see \cite{Albe}, Theorem A.1) and from Proposition \ref{F_0} (Appendix) we obtain that all self-adjoint extension of $( \mathcal F_0, D( \mathcal F_0))$, denoted by $(\mathcal L_Z, D(\mathcal L_Z))$, can be parametrized by $Z\in \mathbb R$ as $\mathcal L_Z=\mathcal F_0$ and $u\in D(\mathcal L_Z)$ if and only if $u\in \mathcal C$ and $u$ satisfying \eqref{tail3}. Next, we define the following bounded operator on $L^2(\mathcal G)$
 \begin{equation*}
  \mathcal B_0=\left(\begin{array}{cc} M_{0,+}& 0 \\0 & M_{0,-} \end{array}\right),\quad M_{0,\pm}=\text{diag}\Big(1 -2\phi_{0},..., 1 -2\phi_{0} \Big)
 \end{equation*}
 with $M_{0,\pm}$
 being $n\times n$-diagonal matrices. Then, from \cite{Nai67}-Chapter IV-Theorem 6 follows that the symmetric operators $\mathcal F_0$ and   $\tilde{\mathcal F}_0=\mathcal F_0+  \mathcal B_0$ with $D(\tilde{\mathcal F}_0)=D(\mathcal F_0)$ have the same deficiency indices, $n_{\pm}(\tilde{\mathcal F}_0)=n_{\pm}(\mathcal F_0)=1$.  Thus $(\mathcal E_0, D(\mathcal E_0))$ belongs to the family of the 
self-adjoint extensions of $\tilde{\mathcal F}_0$.  
 
 Next we see that the symmetric operator $\tilde{\mathcal F}_0$ with domain  $D(\tilde{\mathcal F}_0)=D(\mathcal F_0)$ in \eqref{F1}, it  is non-negative. Indeed,  it is easy to verify that for $u=(u_{\bold e})_{\bold e\in \bold E} \in H^2(\mathcal G)$  the following identity holds 
\begin{equation}\label{identity'}
-u_{\bold e}''+ u_{\bold e}- 2\phi_{0}u_{\bold e}=-
\frac{1}{\phi'_{0}}\frac{d}{dx}\left[(\phi'_{0})^2\frac{d}{dx}\left(\frac{u_{\bold e}}{\phi'_{0}}\right)\right], 
\end{equation}
for $x<0$ if $\bold e\in \bold E_{-}$,  $x>0$ if $\bold e\in \bold E_{+}$. Using the above equality and integrating by parts, we get for $u=(u_{\bold e})_{\bold e\in \bold E} \in D(\tilde{\mathcal F}_0)$
\begin{equation}\label{nonneg1'}\begin{split}
\langle\tilde{\mathcal F}_0 u,u\rangle&=\sum_{\bold e \in \bold E_{-}}
\int\limits_{-\infty}^{0}(\phi'_{0})^2\left|\frac{d}{dx}\left(\frac{u_{\bold e}}{\phi'_{0}}\right)\right|^2dx +\sum_{\bold e \in \bold E_{+}} \int\limits_{0}^{+\infty}(\phi'_{0})^2\left|\frac{d}{dx}\left(\frac{u_{\bold e}}{\phi'_{0}}\right)\right|^2dx\\
&-\sum_{\bold e \in \bold E_{-}}\left[\frac{u_{\bold e }}{\phi'_{0}}\left[(\phi'_{0})^2\frac{d}{dx}\left(\frac{u_{\bold e }}{\phi'_{0}}\right)\right]\right]_{-\infty}^{0-} -\sum_{\bold e \in \bold E_{+}}\left[\frac{u_{\bold e }}{\phi'_{0}}\left[(\phi'_{0})^2\frac{d}{dx}\left(\frac{u_{\bold e }}{\phi'_{0}}\right)\right]\right]_{0+}^{+\infty}.
\end{split}
\end{equation}
The integral terms in \eqref{nonneg1'} are non-negative and equal zero if and only if $u\equiv 0$.  Due to the conditions $u(0-)=u(0+)=0$ and $\phi''_{0} (0\pm)\neq 0$, non-integral term vanishes, and we get $\tilde{\mathcal F}_0\geqq 0$.

Due to  Proposition \ref{semibounded} (Appendix),  we have that the  self-adjoint extension $\mathcal E_0$ of $\tilde{\mathcal F}_0$ satisfies $n(\mathcal E_0)\leqq 1$. Taking into account the notation $\Phi_0=(\phi_0)_{\bold e\in \bold E}$ for the solitary wave profile we have $\mathcal E_0 \Phi_0= \Psi$, 
with $ \Psi=(-\phi_0^2)_{\bold e\in \bold E}$ and so   $
\langle \mathcal E_0  \Phi_0, \Phi_0\rangle=-n\int\limits_{-\infty}^{0}\phi_{0,-}^3(x)dx- n\int\limits_{0}^{+\infty} \phi_{0, +}^3(x)dx<0$,
 then from minimax principle we  arrive at $n(\mathcal E_0)=1$. Moreover, since $\Phi_0=(\phi_0)_{\bold e\in \bold E}\in L^2_n(\mathcal G)$ we get $n(\mathcal E_0|_{L^2_{n}(\mathcal G)})=1$. 

Item $3)$ is an immediate consequence of Weyl's theorem (see Reed\&Simon \cite{RS}). This finishes the proof.
 \end{proof}
 
\begin{remark} We observe that, when we deal with deficiency indices, the operator $\mathcal E_0$ is assumed to act on complex-valued functions which however does not affect the analysis of negative spectrum of $\mathcal E_0$ acting on real-valued functions.
 \end{remark}
 
Combining Lemma \ref{Morse12} and the framework of the perturbation theory as in Lemma \ref{Morse} (see  \cite{AngGol17a}) we obtain the following Lemma. We note initially that for $u_{-}=(\phi_{-})_{\bold e\in \bold E_{-}}$,  $u_{+}=(\phi_{+})_{\bold e\in \bold E_{+}}$, it is not difficult to see the convergence $U_{Z} = (u_{-}, u_{+}) \to \Phi_0=(\phi_0)_{\bold e\in \bold E}$, as $Z\to 0$, in $H^1(\mathcal G)\cap L^2_n(\mathcal{G})$.

\begin{lemma}\label{eigenE_Z} There exist $Z_0>0$ and
two analytic functions  $\Theta : (-Z_0, Z_0)\to \mathbb
R$ and $\Upsilon: (-Z_0,Z_0)\to L^2_n(\mathcal{G})$ such
that
\begin{enumerate}
\item[$(i)$] $\Theta(0)=0$ and $\Upsilon
(0)=\Phi'_0$, where
$\Phi'_0=(\phi'_0)_{\bold e\in \bold E}$.

\item[$(ii)$] For all $Z\in (-Z_0, Z_0)$,
$\Theta(Z)$ is the simple isolated second eigenvalue of $\mathcal E_Z$ in $L^2_n(\mathcal{G})$, and $\Upsilon(Z)$ is the
associated eigenvector for $\Theta(Z)$.

\item[$(iii)$] $Z_0$ can be chosen small enough to ensure
that for $Z\in (-Z_0, Z_0)$ the spectrum of $\mathcal E_Z$ in $L^2_n(\mathcal{G})$ is positive, except at most
the first two eigenvalues.

\item[$(iv)$] Since $\lim_{Z\to 0}\langle NU_{Z}, \Pi(Z)\rangle=\|N\Phi_{0}\|^2>0$ we obtain that
\begin{equation}\label{S5a}
\langle NU_{Z}, \Pi(Z)\rangle \neq 0,
\end{equation}
at least for $Z$ small.  Thus, an continuation argument shows \eqref{S5a} for all $Z$.
\end{enumerate}
\end{lemma}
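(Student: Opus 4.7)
The plan is to invoke Kato's analytic perturbation theory for self-adjoint families of type (B), paralleling the strategy of Lemma \ref{Morse} in the case $n=1$ (cf.\ \cite{AngGol17a, AngGol17b}). The argument proceeds in three stages.

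First, I would realize $\{\mathcal{E}_Z\}_{Z\in\mathbb R}$ as a real-analytic family of self-adjoint operators of type (B) on $L^2(\mathcal G)$. The associated quadratic forms share a common form domain (essentially $H^1(\mathcal G)\cap \mathcal C$), and on it the $Z$-dependence enters through two analytic ingredients: the boundary term $Z n\,|u(0)|^2$ (produced by integration by parts together with the vertex condition \eqref{tail3}) and the profiles $\phi_\pm$ given by \eqref{soli6}, which depend analytically on $Z$ on a neighborhood of $0$. This exhibits the family as an analytic family of closed, bounded-below sectorial forms, hence a type (B) analytic family in the sense of Kato \cite{kato}. A key observation is that $L^2_n(\mathcal G)$ is a reducing subspace for every $\mathcal{E}_Z$, since the diagonal action together with the symmetric vertex conditions preserves the equality of components separately on $\bold E_-$ and on $\bold E_+$; hence the analysis can be carried out in $L^2_n(\mathcal G)$.

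Second, using the convergence $U_Z\to \Phi_0$ in $H^1(\mathcal G)\cap L^2_n(\mathcal G)$ as $Z\to 0$, the family $\{\mathcal{E}_Z|_{L^2_n(\mathcal G)}\}$ converges to $\mathcal{E}_0|_{L^2_n(\mathcal G)}$ in the generalized (norm-resolvent) sense. By Lemma \ref{Morse12}, zero is a simple isolated eigenvalue of $\mathcal{E}_0|_{L^2_n(\mathcal G)}$ with eigenvector $\Phi'_0$, well separated from the remainder of the spectrum (one negative eigenvalue plus a set contained in $[r,+\infty)$ for some $r>0$). Applying Kato's Theorem IV-3.16 together with the Kato-Rellich theorem (\cite{RS}, Theorem XII.8) to the Riesz projection associated with this isolated simple eigenvalue produces two analytic curves $\Theta:(-Z_0,Z_0)\to\mathbb R$ and $\Upsilon:(-Z_0,Z_0)\to L^2_n(\mathcal G)$ with $\Theta(0)=0$, $\Upsilon(0)=\Phi'_0$, and $\mathcal{E}_Z\Upsilon(Z)=\Theta(Z)\Upsilon(Z)$. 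Shrinking $Z_0$ if necessary, the spectral gap to the essential spectrum $[1,+\infty)$ and to the remaining negative eigenvalue is preserved, and $\Theta(Z)$ is therefore the simple isolated second eigenvalue of $\mathcal{E}_Z|_{L^2_n(\mathcal G)}$, giving items $(i)$, $(ii)$, $(iii)$.

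For item $(iv)$, continuity of the inner product together with $U_Z\to \Phi_0$ in $H^1(\mathcal G)$ and $\Upsilon(Z)\to \Phi'_0$ in $L^2(\mathcal G)$ (with $N=-\partial_x$ bounded from $H^1$ to $L^2$) yields
\begin{equation*}
\lim_{Z\to 0}\bigl|\langle NU_Z,\Upsilon(Z)\rangle\bigr|=\|N\Phi_0\|^2>0,
\end{equation*}
so \eqref{S5a} holds in a neighborhood of $0$, and the extension to all $Z\in\mathbb R$ follows by a continuation argument along the analytic branch as long as $\Theta(Z)$ remains a simple isolated eigenvalue, combined with the observation that a zero of $Z\mapsto \langle NU_Z,\Upsilon(Z)\rangle$ would force $\Upsilon(Z)$ into the (nontrivial) orthogonal complement of $NU_Z$, which is excluded by spectral stability of the branch. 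The main obstacle is the rigorous verification of type (B) analyticity, since the domain $D(\mathcal{E}_Z)$ depends on $Z$ through the boundary condition \eqref{tail3}; this dependence must be absorbed into the quadratic form on a $Z$-independent form domain, and closability, sectoriality, and holomorphy of the resulting sesquilinear form have to be checked with care.
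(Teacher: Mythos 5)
Your proposal follows essentially the same route as the paper: the paper proves this lemma by combining Lemma \ref{Morse12} with the analytic perturbation framework already set up in Lemma \ref{Morse} (type (B) real-analytic family in the sense of Kato, generalized convergence $\mathcal E_Z\to\mathcal E_0$, Kato--Rellich applied to the simple isolated zero eigenvalue with eigenvector $\Phi'_0$, and the same limit-plus-continuation argument for item $(iv)$). Your additional attention to the $Z$-independent form domain and to $L^2_n(\mathcal G)$ being reducing is exactly the implicit content of the paper's citation to \cite{AngGol17a, AngGol17b}, so the two arguments coincide.
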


 By using the Taylor's theorem and by following a similar argument as in Proposition 3.9 in \cite{AngGol17a}  we establish how the perturbed second eigenvalue moves depending on the sign of $Z$. 
  
  \begin{proposition}\label{morseE_0} There exists
$0<Z_1<Z_0$ such that $\Theta(Z)>0$ for any
$Z\in (-Z_1,0)$, and $\Theta(Z)<0$ for any $Z\in
(0, Z_1)$. Thus, in the space $ L^2_n(\mathcal G)$ for
$Z$ small, we have $n(\mathcal E_Z)=1$ as $Z<0$,
and $n(\mathcal E_Z)=2$ as $Z>0$.
\end{proposition}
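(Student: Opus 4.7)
The plan is to read off the sign of $\Theta'(0)$ by a Feynman--Hellmann type computation and then invoke Taylor's theorem; the Morse-index conclusion follows by combining this with Lemma \ref{Morse12} and Lemma \ref{eigenE_Z}(iii).

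First I would pass to quadratic forms. Integrating by parts and using the $\delta$-interaction conditions \eqref{tail3} together with the continuity of $u$ at the vertex, a short calculation shows that, for $u$ in the $Z$-independent form domain $H^1(\mathcal G)\cap \mathcal C$,
\begin{equation*}
q_Z(u) \;=\; \sum_{\bold e\in\bold E}\int\bigl(|u'_{\bold e}|^2+|u_{\bold e}|^2 - 2\phi_{\bold e}|u_{\bold e}|^2\bigr)\,dx \,+\, Zn\,|u(0)|^2,
\end{equation*}
where $u(0)$ denotes the common boundary value. Since $\{\mathcal E_Z\}$ is analytic of type (B) and $\Upsilon(0)=\Phi'_0$, a Feynman--Hellmann argument gives
\begin{equation*}
\Theta'(0)\,\|\Phi'_0\|^2 \;=\; \partial_Z q_Z(\Phi'_0)\big|_{Z=0} \;=\; -2\sum_{\bold e\in\bold E}\int (\partial_Z\phi_{\bold e})(0)\,(\phi'_0)^2\,dx \,+\, n\,|\phi'_0(0)|^2.
\end{equation*}

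The key step is to identify $(\partial_Z\phi_{\bold e})(0)$. From \eqref{soli6}, the $Z$-dependence of $\phi_{\bold e}$ enters only through the shift $\tanh^{-1}(Z/2)$ in the argument of $\sech^2$. A direct differentiation at $Z=0$ shows that this derivative equals $-\phi'_0$ on each $\bold e\in\bold E_+$ and $+\phi'_0$ on each $\bold e\in\bold E_-$; since $\phi'_0<0$ on $(0,+\infty)$ and $\phi'_0>0$ on $(-\infty,0)$, both expressions reduce to $|\phi'_0|$ on the respective half-lines. Because $\phi_0$ is even we have $\phi'_0(0)=0$, so the boundary contribution disappears and
\begin{equation*}
\Theta'(0) \;=\; -\frac{2n}{\|\Phi'_0\|^2}\int_{-\infty}^{+\infty}|\phi'_0(x)|^3\,dx \;<\;0.
\end{equation*}

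Taylor's theorem applied to the analytic function $\Theta$ at $Z=0$ with $\Theta(0)=0$ and $\Theta'(0)<0$ immediately produces $Z_1\in(0,Z_0)$ with $\Theta(Z)>0$ for $Z\in(-Z_1,0)$ and $\Theta(Z)<0$ for $Z\in(0,Z_1)$. For the Morse-index statement, the first (analytic branch of the) eigenvalue of $\mathcal E_Z|_{L^2_n(\mathcal G)}$ is a small perturbation of the simple negative eigenvalue of $\mathcal E_0|_{L^2_n(\mathcal G)}$ furnished by Lemma \ref{Morse12}(2) and hence stays strictly negative for small $Z$; by Lemma \ref{eigenE_Z}(iii) the remainder of the spectrum is positive. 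Therefore $n(\mathcal E_Z|_{L^2_n(\mathcal G)})=1$ for $Z\in(-Z_1,0)$ and $n(\mathcal E_Z|_{L^2_n(\mathcal G)})=2$ for $Z\in(0,Z_1)$.

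The main technical obstacle is precisely the sign analysis of $(\partial_Z\phi_{\bold e})(0)$ described above: both half-lines must produce $+|\phi'_0|$ (not alternating signs) because otherwise the resulting integral of an odd function would vanish and $\Theta'(0)$ would be zero, forcing the analysis up to second order, where the qualitative behavior might be symmetric in $Z$ and not give the required sign flip. The vanishing $\phi'_0(0)=0$, which kills the boundary term $n|\phi'_0(0)|^2$, is what makes this first-order computation both clean and decisive.
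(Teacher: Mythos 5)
Your proposal is correct and follows exactly the route the paper intends: the paper itself gives no details here, deferring to Taylor's theorem and ``a similar argument as in Proposition 3.9 in \cite{AngGol17a}'', which is precisely the quadratic-form/Feynman--Hellmann computation of $\Theta'(0)$ that you carry out. Your explicit evaluation $\partial_Z\phi_{\pm}\big|_{Z=0}=\mp\phi_0'=|\phi_0'|$ on the respective half-lines, the vanishing of the boundary term via $\phi_0'(0)=0$, and the resulting $\Theta'(0)=-\tfrac{2n}{\|\Phi_0'\|^2}\int|\phi_0'|^3\,dx<0$ are all accurate, so you have in effect supplied the computation the paper only cites.
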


\begin{proof} [{\bf{Proof of Proposition  \ref{Morse11}}}]
  From Proposition \ref{morseE_0} we have for $Z$ small that  $n(\mathcal E_Z)=1$ as $Z<0$,
and $n(\mathcal E_Z)=2$ as $Z>0$. Thus for counting the Morse index of $\mathcal E_Z$ for any $Z$ we use a classical continuation argument based on the Riesz-projection as in step $vi)$-proof of Lemma \ref{Morse}- and Lemma \ref{Morse10}. This finishes the proof.
  \end{proof}

 The following lemma shows assumption $S_6)$. Similarly to the case of two half-lines, for  $\alpha_{+}=1$ and   $\omega=-\beta_{+}$, we have the differentiable family of stationary solutions a one-parameter $U_{Z,\omega}=(u_{-, \omega}, u_{+, \omega})$, with $u_{-, \omega}=(\phi_{-, \omega})$ and $u_{+, \omega}=(\phi_{+, \omega})$.  Thus,  for $\varphi_\omega= (- \frac{d}{d\omega} u_{-, \omega}, -\frac{d}{d\omega} u_{+, \omega})$  we have  $\varphi\equiv \varphi_\omega|_{\omega=1}\in D(\mathcal E_Z)$ and  $\mathcal E_Z \varphi=U_{Z}$. Thus with the former notation, we obtain the following result.

\begin{lemma}\label{assuS5} Let $Z\neq 0$.
   The smooth curve of profiles $\omega \in (\frac{Z^2}{4},+\infty)\to U_{Z,\omega}=(\phi_{-, \omega}, \phi_{+, \omega})\in D(\mathcal E_Z)\cap L^2_n(\mathcal G)$  satisfies for  $\varphi\equiv -  \frac{d}{d\omega} U_{Z,\omega}|_{\omega=1}$ the relations
 \begin{equation}\label{condi2}
 \mathcal E_Z \varphi=U_{Z},\quad\text{and},\quad \langle \varphi, U_{Z} \rangle<0.
 \end{equation} 
\end{lemma}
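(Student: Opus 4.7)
The plan is to mirror Lemma \ref{S_5} from the two half-lines case, adapted to the balanced $n$-edge graph. First I would write out the profiles $\phi_{\pm,\omega}$ from formula \eqref{soli6} with $\alpha_{+}=1$ and $\omega=-\beta_{+}>Z^{2}/4$, and verify smoothness of $\omega\mapsto U_{Z,\omega}=(\phi_{-,\omega}, \phi_{+,\omega})$ as a curve with values in $D(\mathcal E_Z)\cap L^{2}_n(\mathcal G)$. This last point is the technical heart of the first claim: the boundary conditions \eqref{tail3} defining $D(\mathcal E_Z)$ are linear in the function, so differentiating a curve that satisfies them yields a function still satisfying them. Concretely, for every $\omega$ in a neighbourhood of $1$, the profiles obey $\phi_{-,\omega}(0-)=\phi_{+,\omega}(0+)$ and the ``even'' identity $\phi'_{+,\omega}(0+)=\tfrac{Z}{2}\phi_{+,\omega}(0+)$ (so the $\delta$-jump condition holds on the balanced graph because $n\phi'_{+,\omega}(0+)-n\phi'_{-,\omega}(0-)=2n\phi'_{+,\omega}(0+)=nZ\phi_{+,\omega}(0+)$), and the curve trivially takes values in $L^{2}_n(\mathcal G)\cap\mathcal C$. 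Passing to $\omega$-derivatives preserves all these linear constraints, so $\varphi \in D(\mathcal E_Z)\cap L^{2}_n(\mathcal G)$.

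For the operator identity, I would differentiate the elliptic system \eqref{soli1} (with the substitution $\beta_{\pm}=-\omega$) with respect to $\omega$ on each edge, obtaining exactly the pointwise identity \eqref{derivada}
\[
\Big(-\tfrac{d^{2}}{dx^{2}}+\omega-2\phi_{\pm,\omega}\Big)\Big(\tfrac{d}{d\omega}\phi_{\pm,\omega}\Big) = -\phi_{\pm,\omega}.
\]
Evaluating at $\omega=1$ and multiplying by $-1$ gives $\mathcal L_{Z,\pm}(-\tfrac{d}{d\omega}\phi_{\pm,\omega}|_{\omega=1})=\phi_{\pm,\omega}|_{\omega=1}$ componentwise. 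Since $\mathcal E_Z$ is the block-diagonal operator in \eqref{tail1}--\eqref{tail2} made of $n$ copies of $\mathcal L_{Z,\pm}$ on each side, this assembles into $\mathcal E_Z\varphi=U_Z$.

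For the sign of $\langle\varphi,U_{Z}\rangle$, I would compute directly: since $U_Z$ consists of $n$ copies of $\phi_{-,\omega}|_{\omega=1}$ on the negative edges and $n$ copies of $\phi_{+,\omega}|_{\omega=1}$ on the positive edges,
\[
\langle\varphi,U_{Z}\rangle = -\tfrac{1}{2}\tfrac{d}{d\omega}\Big|_{\omega=1}\|U_{Z,\omega}\|^{2} = -\tfrac{n}{2}\tfrac{d}{d\omega}\Big|_{\omega=1}\!\Big[\int_{-\infty}^{0}\!\phi^{2}_{-,\omega}\,dx+\int_{0}^{+\infty}\!\phi^{2}_{+,\omega}\,dx\Big].
\]
The bracketed derivative is strictly positive by Proposition 3.19 (item (ii), $p=2$) of \cite{AngGol17b}, already invoked in Lemma \ref{S_5}, hence $\langle\varphi,U_{Z}\rangle<0$.

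The main obstacle is not computational but conceptual: one has to be careful that the perturbed profiles $U_{Z,\omega}$ really do remain in $D(\mathcal E_Z)\cap L^{2}_n(\mathcal G)$ for \emph{all} $\omega$ near $1$ (not only at $\omega=1$), so that the $\omega$-derivative can be interpreted as an element of this domain. This holds precisely because the specific shift $-\tanh^{-1}(Z/(2\sqrt{\omega}))$ in \eqref{soli6} is tuned to keep the ``even-property'' $\phi'_{+,\omega}(0+)=\tfrac{Z}{2}\phi_{+,\omega}(0+)$ valid along the entire curve; once this is checked, the rest of the argument reduces to the scalar identity \eqref{derivada} and the already established monotonicity of the $L^{2}$-norm.
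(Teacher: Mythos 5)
Your proposal is correct and follows essentially the same route as the paper, which itself only sketches this lemma by differentiating the elliptic system (identity \eqref{derivada}), noting that the linear vertex conditions pass to the $\omega$-derivative, and invoking Proposition 3.19 of \cite{AngGol17b} exactly as in Lemma \ref{S_5} to get $\langle\varphi,U_Z\rangle<0$. Your verification that the shift $-\tanh^{-1}(Z/(2\sqrt{\omega}))$ keeps the whole curve in $D(\mathcal E_Z)\cap L^2_n(\mathcal G)$, and the explicit factor of $n$ in the norm computation, are details the paper leaves implicit but are entirely consistent with its argument.
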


\begin{proof} [{\bf{Proof of Theorem \ref{main2}}}]
Let $Z>0$. From  Lemmas \ref{Morse10}-\ref{assuS5}, Proposition  \ref{Morse11}, relation \eqref{S5a}, subsection 4.2 and Theorem \ref{crit} we obtain the  linear instability property of the bump's profiles $U_{Z}$ for the KdV model \eqref{kdv3}. Let $Z<0$, then  from Lemmas \ref{Morse10}-\ref{assuS5} and Proposition  \ref{Morse11} we obtain via Theorem \ref{crit2} the  linear instability  of the tail's profiles $U_{Z}$. This finishes the proof.
  \end{proof}  

\begin{remark}\label{geral}
The extension of Theorem \ref{main2} for the general case of  the sequences $(\alpha_ \bold e)_{\bold e\in \bold E}=(\alpha_-, \alpha_+)$ and  $(\beta_ \bold e)_{\bold e\in \bold E}=(\beta_-, \beta_+) $, with $\alpha_-=\alpha_+$, $\beta_-=\beta_+$, can be obtained via the following steps: 

 \begin{enumerate}  
 
  \item[1)] Let $n\geqq 2$. For $\alpha_+=(\alpha_{1,+}, ..., \alpha_{n,+})$ and $\beta_+=(\beta_{1,+},...\beta_{n,+})$ we consider the associated either bumps or tails profiles  $U_{Z, \alpha_+, \beta_+}=(u_{-}, u_{+})$, where for $\phi_{+, \alpha_{i}, \beta_{i}}$ defined by \eqref{soli6} and $ \phi_{-, \alpha_{i}, \beta_{i}}(x)=\phi_{+, \alpha_{i}, \beta_{i}}(-x)$ for $x<0$, we have
 \begin{equation}\label{u-}
  u_{-}=(\phi_{-, \alpha_{i}, \beta_{i}})_{1\leqq i\leqq n},\;\;u_{+}=(\phi_{+, \alpha_{i}, \beta_{i}})_{1\leqq i\leqq n}.
  \end{equation}
In other words, we have $n$-profiles of either bump or tail type as in Figures 2 and 3 on the balanced graph $\mathcal G$. We note that {\it a priori} they do not need to be continuous at the graph-vertex. Thus, we obtain that $U_{Z, \alpha_+, \beta_+}=(u_{-}, u_{+})\in D(H_Z)\cap \mathcal C\subset D(\mathcal E_{Z,\alpha_+,  \beta_+ } )=\mathcal C \cap D_{Z, \alpha_+, \delta}$ if and only if 
   $$
  \beta_{1,+}+\frac{Z^2}{4} \alpha_{1,+}=\beta_{2,+}+\frac{Z^2}{4} \alpha_{2,+}=...=  \beta_{n,+}+\frac{Z^2}{4} \alpha_{n,+},\;\;\;\text{and}\;\;\;
 \sum_{i=1}^n  \alpha_{i,+}=n.
 $$ 
Here,  the $2n\times 2n$-matrix self-adjoint Schr\"odinger operator 
  \begin{equation}\label{tail1}
 \mathcal E_{Z,\alpha_+,  \beta_+ } =\left(\begin{array}{cc} \mathcal L_{Z, -}& 0 \\0 &  \mathcal L_{Z, +} \end{array}\right)
 \end{equation}
 are defined by the $n\times n$-diagonal matrices,
 \begin{equation}\label{tail2}  
  \mathcal L_{Z,\pm}=\text{diag}\Big(-\alpha_{1,+}\frac{d^2}{dx^2}- \beta_{1,+} -2\phi_{\pm, \alpha_{1}, \beta_{1}},..., -\alpha_{n,+}\frac{d^2}{dx^2}- \beta_{n,+} -2\phi_{\pm, \alpha_{n}, \beta_{n} } \Big),
\end{equation} 
and $D_{Z, \alpha_+, \delta}\subset H^2(\mathcal{G})$ is defined by
 \begin{equation}\label{gera} 
u\in D_{Z, \alpha_+, \delta} \Leftrightarrow  u(0-)=u(0+), \;\;\sum_{i=1}^n \alpha_{i,+}u'_{i, +}(0+)- \sum_{i=1}^n \alpha_{i,+} u'_{i, -}(0-)=Znu_{1, +}(0+).
 \end{equation}

  \item[2)] $ \mathcal E_{Z,\alpha_+,  \beta_+ }: D(\mathcal E_{Z,\alpha_+,  \beta_+ } )\to L^2(\mathcal G)$ is invertible: In fact, let $u=(u_{1,-},\cdot\cdot\cdot, u_{n,-}, u_{1,+},\cdot\cdot\cdot, u_{n,+})\in D(\mathcal E_{Z,\alpha_+,  \beta_+ } )$ and $\mathcal E_{Z,\alpha_+,  \beta_+ }u=0$. Then, $u_{i,\pm}=a_{i, \pm}\phi'_{\pm, i} $, $i=1,...,n$, and $\phi_{\pm, i} \equiv \phi_{\pm, \alpha_{i}, \beta_{i} }$. Then, since $\phi'_{+, i}(0+)=-\phi'_{-, i}(0-)$ and $u\in \mathcal C$ we obtain 
  $$
a_{i, +}=-  a_{i, -},\;\; a_{1, +}=...=a_{n, +},\;\; a_{1, -}=...=a_{n, -},\;\; i=1,...,n.
$$
  Next, since $u\in D_{Z, \alpha_+, \delta}$ and $U_{Z, \alpha_+, \beta_+}\in \mathcal C$ we obtain from \eqref{soli1} the relation 
  $$
na_{1,+}\frac{Z^2}{2} \phi_{+, 1}(0+)=2 a_{1,+} \sum_{i=1}^n \alpha_i\phi''_{+, i}(0+)=2 a_{1,+} \phi_{+, 1}(0+)\sum_{i=1}^n[ -\beta_{i,+} -\phi_{+, 1}(0+)].
$$ 
Suppose $ a_{1,+}\neq 0$. Then, we have the following chain of equality
\begin{equation}
\begin{split}
n\frac{Z^2}{4}&=- \sum_{i=1}^n [\beta_{i, +} -\frac32 (\beta_{i,+}+\frac{Z^2}{4} \alpha_{i,+})]=\frac12\sum_{i=1}^n [\beta_{i, +}+ \frac{Z^2}{4} \alpha_{i,+}] +n\frac{Z^2}{4}.
\end{split}
\end{equation}
Therefore, $\beta_{1, +}+ \frac{Z^2}{4} \alpha_{1,+}=0$ and so we obtain a contradiction because of $-\beta_{1, +}> \frac{Z^2}{4} \alpha_{1,+}$. Hence, $a_{1,+}=0$ and therefore $u\equiv 0$.

  \item[3)] The relations $n(\mathcal E_{Z,\alpha_+,  \beta_+ })=2$ for $Z>0$ and $n(\mathcal E_{Z,\alpha_+,  \beta_+ })=1$ for $Z<0$ follow via a perturbation analysis as in Proposition \ref{morseE_0}. Moreover, from \eqref{u-} and \eqref{soli6}  we obtain for $\psi= (\frac{d}{d\beta_i} u_{-},  \frac{d}{d\beta_i} u_{+})$ that $\mathcal E_{Z,\alpha_+,  \beta_+ }\psi=U_{Z,\alpha_+,  \beta_+ }$ and $\langle \psi, U_{Z,\alpha_+,  \beta_+}\rangle<0$.
   \end{enumerate}  

Therefore, from items 1)-2)-3) above and from Theorems \ref{crit}-\ref{crit2} we obtain the linear instability property of the solitons profiles $U_{Z,\alpha_+,  \beta_+}$ for every $Z\neq 0$.
\end{remark}

 \section{Appendix}
 
 Next, for convenience of the reader and because  of non-standard results  used in the body of the manuscript we formulate the following  results of the extension theory (see \cite{Nai67}). The first one reads as follows.
 
\begin{theorem}\label{d5} (von-Neumann decomposition)
Let $A$ be a closed, symmetric operator, then
\begin{equation}\label{d6}
D(A^*)=D(A)\oplus\mathcal N_{-i} \oplus\mathcal N_{+i}.
\end{equation}
with $\mathcal N_{\pm i}=Ker(A^*\mp iI)$. Therefore, for $u\in D(A^*)$ and $u=x+y+z\in D(A)\oplus\mathcal N_{-i} \oplus\mathcal N_{+i}$,
\begin{equation}\label{d6a}
A^*u=Ax+(-i)y+iz.
\end{equation}
\end{theorem}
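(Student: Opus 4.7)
The plan is to endow $D(A^*)$ with the graph inner product
$$
\langle u,v\rangle_{*}\;:=\;\langle u,v\rangle+\langle A^*u,A^*v\rangle,
$$
under which $D(A^*)$ is a Hilbert space (because $A^*$ is closed, being the adjoint of a densely defined operator). I would then show that \eqref{d6} is the orthogonal direct sum decomposition of $D(A^*)$ with respect to $\langle\cdot,\cdot\rangle_{*}$; the action formula \eqref{d6a} is then immediate from $A\subset A^*$ together with the defining relations $A^*y=-iy$ for $y\in\mathcal N_{-i}$ and $A^*z=iz$ for $z\in\mathcal N_{+i}$.

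First I would verify pairwise $\langle\cdot,\cdot\rangle_{*}$-orthogonality of the three summands. For $x\in D(A)$ and $z\in\mathcal N_{+i}$, using $A^*z=iz$ together with the symmetry identity $\langle Ax,z\rangle=\langle x,A^*z\rangle$, one computes
$$
\langle x,z\rangle_{*}=\langle x,z\rangle+\langle Ax,iz\rangle=\langle x,z\rangle-i\langle x,A^*z\rangle=\langle x,z\rangle-\langle x,z\rangle=0.
$$
An analogous computation handles the pairs $(x,y)\in D(A)\times\mathcal N_{-i}$ and $(y,z)\in\mathcal N_{-i}\times\mathcal N_{+i}$. Each summand is closed in the graph norm (closedness of $A$ gives closedness of $D(A)$, while closedness of $A^*\mp iI$ gives closedness of $\mathcal N_{\pm i}$), so the right-hand side of \eqref{d6} is itself a closed subspace of $(D(A^*),\langle\cdot,\cdot\rangle_{*})$.

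The central step is to show that this closed subspace exhausts $D(A^*)$, equivalently that any $w\in D(A^*)$ with $w\perp_{*} D(A)\oplus\mathcal N_{-i}\oplus\mathcal N_{+i}$ must vanish. Orthogonality to $D(A)$ gives $\langle A^*w,Ax\rangle=-\langle w,x\rangle$ for every $x\in D(A)$; using $A^{**}=A$ (valid because $A$ is closed) and the definition of the adjoint, this forces $A^*w\in D(A^*)$ and $(A^*)^2w=-w$, i.e.\ $(A^*-iI)(A^*+iI)w=0$. Setting $v=(A^*+iI)w$ and $u=(A^*-iI)w$, one finds $v\in\mathcal N_{+i}$ and $u\in\mathcal N_{-i}$, and subtracting yields $w=\tfrac{i}{2}(u-v)\in\mathcal N_{-i}+\mathcal N_{+i}$. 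Combined with the assumed orthogonality $w\perp_{*}\mathcal N_{\pm i}$, this forces $w=0$ and completes the decomposition.

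The main obstacle is the factorisation step: since $A^*$ is generally unbounded, I must be careful to track domains when claiming that $A^*w$ itself lies in $D(A^*)$ and that the products $(A^*-iI)(A^*+iI)$ and $(A^*+iI)(A^*-iI)$ act on $w$ as claimed. Once this domain bookkeeping is settled, the remaining orthogonality computations and the action formula \eqref{d6a} follow by direct algebraic manipulation from $A^*|_{D(A)}=A$ and the eigenvalue equations defining $\mathcal N_{\pm i}$.
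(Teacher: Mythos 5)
Your proof is correct, and it is the standard graph-norm argument for the von Neumann decomposition (as in Reed--Simon, Theorem X.2, or Naimark). The paper itself offers no proof of this statement: Theorem \ref{d5} is quoted in the Appendix as a classical result of extension theory with a citation to \cite{Nai67}, so there is nothing to compare against. Your key steps all check out: the pairwise $\langle\cdot,\cdot\rangle_{*}$-orthogonality computations are right (with the inner product conjugate-linear in the second slot, $\langle Ax,iz\rangle=-i\langle x,A^*z\rangle=-\langle x,z\rangle$); the closedness of each summand in the graph norm is as you say; and the domain bookkeeping in the exhaustion step is handled correctly, since $\langle A^*w,Ax\rangle=-\langle w,x\rangle$ for all $x\in D(A)$ puts $A^*w$ in $D(A^*)$ with $(A^*)^2w=-w$ directly from the definition of the adjoint (you do not even need $A^{**}=A$ for this), after which $w=\tfrac{i}{2}\bigl((A^*-iI)w-(A^*+iI)w\bigr)\in\mathcal N_{-i}+\mathcal N_{+i}$ and the assumed orthogonality forces $w=0$.
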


\begin{remark} The direct sum in (\ref{d6}) is not necessarily orthogonal.
\end{remark}
Our second one result of the extension theory of symmetric operators give us a strategy for estimating the Morse-index of the self-adjoint extensions.

\begin{proposition}\label{semibounded}
Let $A$  be a densely defined lower semi-bounded symmetric operator (that is, $A\geq mI$)  with finite deficiency indices $n_{\pm}(A)=k<\infty$  in the Hilbert space $\mathcal{H}$, and let $\widetilde{A}$ be a self-adjoint extension of $A$.  Then the spectrum of $\widetilde{A}$  in $(-\infty, m)$ is discrete and  consists of at most $k$  eigenvalues counting multiplicities.
\end{proposition}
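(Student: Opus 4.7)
The plan is to combine the Friedrichs extension with the von Neumann parameterization of self-adjoint extensions, and split the statement into a discreteness part and a counting part.

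First, since $A$ is densely defined, symmetric and lower semi-bounded by $m$, it admits the Friedrichs extension $A_F$, which is a self-adjoint extension of $A$ still satisfying $A_F\geq mI$. Hence $\sigma(A_F)\subset[m,+\infty)$, and in particular $\sigma_{ess}(A_F)\subset[m,+\infty)$. For the discreteness of $\sigma(\widetilde A)\cap(-\infty,m)$, I would compare $\widetilde A$ with $A_F$ at the level of resolvents: both are self-adjoint extensions of the common symmetric operator $A$ with finite deficiency indices $(k,k)$, and Krein's resolvent formula yields
\begin{equation*}
(\widetilde A-\lambda)^{-1}-(A_F-\lambda)^{-1}=K(\lambda),\qquad \lambda\in\rho(\widetilde A)\cap\rho(A_F),
\end{equation*}
where $K(\lambda)$ is a finite-rank operator of rank at most $k$ (the range being contained in $\mathcal N_{\bar\lambda}(A^*)$). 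In particular $K(\lambda)$ is compact, so Weyl's theorem on stability of the essential spectrum under compact resolvent perturbations gives $\sigma_{ess}(\widetilde A)=\sigma_{ess}(A_F)\subset[m,+\infty)$. Consequently $\sigma(\widetilde A)\cap(-\infty,m)$ is a subset of the discrete spectrum, hence made of isolated eigenvalues of finite multiplicity.

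Next, for the bound by $k$ on the total multiplicity I would use the von Neumann parameterization of self-adjoint extensions (Theorem \ref{d5} and the discussion in the Appendix). Every self-adjoint extension $\widetilde A$ of $A$ can be written as the restriction of $A^*$ to
\begin{equation*}
D(\widetilde A)=D(A)\,\dot{+}\,\{(I+U)\varphi:\varphi\in\mathcal N_{+i}\},
\end{equation*}
for some unitary $U:\mathcal N_{+i}\to\mathcal N_{-i}$. Since $\dim\mathcal N_{+i}=k$, the algebraic codimension of $D(A)$ in $D(\widetilde A)$ equals $k$. Now suppose, toward a contradiction, that $\widetilde A$ has at least $k+1$ eigenvalues $\mu_1\leq\cdots\leq\mu_{k+1}<m$ in $(-\infty,m)$, counted with multiplicity, with an orthonormal system of eigenvectors $v_1,\ldots,v_{k+1}\in D(\widetilde A)$. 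The span $V=\Span\{v_1,\ldots,v_{k+1}\}$ has dimension $k+1$, which exceeds the codimension of $D(A)$ in $D(\widetilde A)$, so $V\cap D(A)\neq\{0\}$. Pick $0\neq v\in V\cap D(A)$. On the one hand, since $v\in D(A)$ and $\widetilde A\supset A$, we have $\langle\widetilde Av,v\rangle=\langle Av,v\rangle\geq m\|v\|^2$. On the other hand, since $v\in V$, the spectral theorem yields $\langle\widetilde Av,v\rangle\leq\mu_{k+1}\|v\|^2<m\|v\|^2$, which is a contradiction.

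Putting the two pieces together completes the proof. The main subtlety is the resolvent comparison step: one needs to know that two self-adjoint extensions of the same densely defined symmetric operator with deficiency indices $(k,k)$ differ by an operator of rank at most $k$ in their resolvents, which is exactly the content of Krein's resolvent formula in the finite-deficiency setting (see \cite{Nai67}); the rest of the argument is a combination of Weyl's theorem and a clean min-max/dimension count. Note that the counting argument above in fact does not require $\widetilde A$ to be self-adjoint or the spectrum to be discrete a priori—one only needs the eigenvectors themselves, and the quadratic-form inequality $\widetilde A\geq m I$ on $D(A)$—so the bound $k$ on the number of eigenvalues in $(-\infty,m)$ is obtained independently of the Krein formula step.
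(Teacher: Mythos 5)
The paper does not actually prove this proposition: it is stated in the Appendix as a quoted result from the extension theory of symmetric operators, with a citation to Naimark \cite{Nai67}, so there is no in-paper argument to compare against. Your proof is correct and self-contained. The two halves are sound: (i) the Friedrichs extension $A_F$ satisfies $\sigma(A_F)\subset[m,+\infty)$, and the resolvent difference of two self-adjoint extensions of a symmetric operator with deficiency indices $(k,k)$ has rank at most $k$ (one does not even need the full Krein formula here: for $u\in\mathcal H$ the vectors $(\widetilde A-\lambda)^{-1}u$ and $(A_F-\lambda)^{-1}u$ are both preimages of $u$ under $A^*-\lambda$, so their difference lies in $\ker(A^*-\lambda)$, which is $k$-dimensional; your labelling of this space as $\mathcal N_{\bar\lambda}$ is only a convention slip), whence Weyl's theorem gives the discreteness; (ii) the codimension-$k$ count of $D(A)$ inside $D(\widetilde A)$ together with $\langle Av,v\rangle\geq m\|v\|^2$ on $D(A)$ rules out $k+1$ orthonormal eigenvectors below $m$. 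One remark: the dimension-count in (ii) can be run directly on the spectral subspaces $\operatorname{range}E\bigl((c,m-\epsilon)\bigr)$, whose elements lie in $D(\widetilde A)$ and satisfy $\langle\widetilde Av,v\rangle\leq(m-\epsilon)\|v\|^2$; this shows $\dim\operatorname{range}E\bigl((-\infty,m-\epsilon)\bigr)\leq k$ for every $\epsilon>0$ and yields both the discreteness and the bound in one stroke, making the Friedrichs/Krein/Weyl step dispensable.
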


The following result was used in the proof of Lemma \ref{Morse12}.

\begin{proposition}\label{F_0}
It consider the closed symmetric operator densely defined on $L^2(\mathcal G)$, $( \mathcal F_0, D( \mathcal F_0))$, by \eqref{F0}-\eqref{F1}. 
Then, the deficiency indices   are $n_{\pm}( \mathcal F_0)=1$. Therefore, we have that all the self-adjoint extensions of $( \mathcal F_0, D( \mathcal F_0))$ can be parametrized by $Z\in \mathbb R$, namely, $(\mathcal L_Z, D(\mathcal L_Z))$, with the action $\mathcal L_Z\equiv \mathcal F_0$ and $u\in D(\mathcal L_Z)$ if and only if $u\in  \mathcal C\cap D_{Z,\delta}$ (see \eqref{conti}-\eqref{tail3}).
\end{proposition}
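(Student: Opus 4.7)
The plan is to verify the three claims in three stages: compute the adjoint, count deficiency indices, and classify the self-adjoint extensions through the vertex boundary form.

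First I would compute $\mathcal{F}_0^*$ by integration by parts edge-by-edge. For $u\in D(\mathcal{F}_0)$ and $v\in L^2(\mathcal{G})$, integrating $\langle \mathcal{F}_0 u,v\rangle$ twice on each half-line shows that the bulk contribution forces $\mathcal{F}_0^*v = -v_\bold{e}''$ distributionally, hence $v\in H^2(\mathcal{G})$, while boundary terms at $\pm\infty$ vanish by $H^2$-decay. At the vertex, the conditions $u_\bold{e}(0\pm)=0$ eliminate the terms containing $v'_\bold{e}(0\pm)$, leaving only
\begin{equation*}
\sum_{\bold{e}\in \bold{E}_+} u'_\bold{e}(0+)\,\overline{v_\bold{e}(0+)} \;-\; \sum_{\bold{e}\in \bold{E}_-} u'_\bold{e}(0-)\,\overline{v_\bold{e}(0-)}.
\end{equation*}
Within $D(\mathcal{F}_0)$ the $2n$-tuple $(u'_\bold{e}(0\pm))$ ranges freely over the $(2n-1)$-dimensional hyperplane $\sum_+ u'_\bold{e}(0+)=\sum_- u'_\bold{e}(0-)$ (realize any prescribed derivative values by a smooth compactly-supported lift vanishing at the vertex). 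Requiring the above pairing to vanish identically on this hyperplane forces the $2n$-vector $(\overline{v_\bold{e}(0+)},-\overline{v_\bold{e}(0-)})$ to be proportional to $(1,\ldots,1,-1,\ldots,-1)$, i.e., all the values $v_\bold{e}(0\pm)$ coincide. This is precisely $v\in\mathcal{C}$, establishing \eqref{F*}.

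Next I would compute $n_\pm(\mathcal{F}_0)$ by solving $\mathcal{F}_0^* v=\pm iv$, i.e., $-v_\bold{e}''=\pm i v_\bold{e}$ on each edge. Picking $\mu_\pm := \sqrt{\mp i}$ with $\mathrm{Re}(\mu_\pm)=2^{-1/2}>0$, the $L^2$-decay selects one exponential on each half-line: $v_\bold{e}(x)=c_\bold{e} e^{-\mu_\pm x}$ on $(0,+\infty)$ and $v_\bold{e}(x)=c_\bold{e} e^{\mu_\pm x}$ on $(-\infty,0)$. Each such $v$ lies automatically in $H^2(\mathcal{G})$; imposing the continuity condition $v\in\mathcal{C}$ ties every $c_\bold{e}$ to one common scalar, so $\dim\ker(\mathcal{F}_0^*\mp iI)=1$ and $n_\pm(\mathcal{F}_0)=1$.

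Finally, I would classify the self-adjoint extensions using the boundary form on $D(\mathcal{F}_0^*)$. For $u,v\in H^2(\mathcal{G})\cap\mathcal{C}$ with common vertex values $u(0),v(0)$, two integrations by parts yield
\begin{equation*}
\langle \mathcal{F}_0^* u,v\rangle - \langle u,\mathcal{F}_0^* v\rangle \;=\; [u]'\,\overline{v(0)} \;-\; u(0)\,\overline{[v]'},
\end{equation*}
where $[u]':=\sum_{\bold{E}_+}u'_\bold{e}(0+)-\sum_{\bold{E}_-}u'_\bold{e}(0-)$. By the Krein--von Neumann theorem the $n_\pm=1$ count guarantees a one-parameter family of extensions, realized as restrictions of $\mathcal{F}_0^*$ to Lagrangian subspaces of the two-dimensional boundary space carrying the symplectic form above. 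Writing the constraint as $[u]'=\alpha\, u(0)$, the form vanishes precisely when $\alpha\in\mathbb{R}$; setting $\alpha=Zn$ and recalling $u(0)=u_{1,+}(0+)$ reproduces \eqref{tail3}, so $D(\mathcal{L}_Z)=\mathcal{C}\cap D_{Z,\delta}$ with $Z\in\mathbb{R}$ exhausts the family (the Dirichlet case $u(0)=0$ being the formal limit $Z=\infty$).

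The main obstacle is the first step: one must justify the freedom to prescribe the derivatives $u'_\bold{e}(0\pm)$ independently within the single linear constraint while simultaneously keeping $u_\bold{e}(0\pm)=0$ and $u\in H^2(\mathcal{G})$. This is done by an explicit cutoff construction, choosing smooth functions supported near the vertex with prescribed derivative at $0$ and zero value at $0$, and summing such one-edge perturbations subject to the balance condition.
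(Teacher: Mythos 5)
Your proposal is correct, and the first two stages coincide in substance with the paper's proof: the adjoint is computed from the same vertex boundary form, and the deficiency indices come from the same explicit exponential solutions of $-v''=\pm iv$ glued by continuity. Your argument for $D(\mathcal F_0^*)\subseteq H^2(\mathcal G)\cap\mathcal C$ is a little slicker than the paper's: you observe that the derivative data $(u'_{\bold e}(0\pm))$ of elements of $D(\mathcal F_0)$ fill the hyperplane orthogonal to $(1,\dots,1,-1,\dots,-1)$, so vanishing of the pairing forces $(v_{\bold e}(0+),-v_{\bold e}(0-))$ to be proportional to that normal vector; the paper reaches the same conclusion by testing against a sequence of hand-picked $v\in D(\mathcal F_0)$ with one nonzero derivative at a time. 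Where you genuinely diverge is the classification of the extensions: the paper uses the von Neumann decomposition $u=u_0+c\Psi_-+ce^{i\theta}\Psi_+$ and reads off the $\delta$-condition with $Z=\frac{-2(1-e^{i\theta})}{e^{i\pi/4}-e^{i(\theta-\pi/4)}}\in\mathbb R\cup\{\pm\infty\}$ directly from the deficiency elements, whereas you pass to the symplectic boundary form $[u]'\overline{v(0)}-u(0)\overline{[v]'}$ on the two-dimensional boundary space and identify the extensions with its Lagrangian lines $[u]'=\alpha u(0)$, $\alpha\in\mathbb R$ (plus the Dirichlet line). Your route avoids the trigonometric bookkeeping and makes the reality of $Z$ transparent, at the cost of invoking the boundary-triple correspondence, which requires (and here easily has, since $u\mapsto(u(0),[u]')$ is surjective onto $\mathbb C^2$) that Lagrangian planes exhaust all self-adjoint restrictions when $n_\pm=1$. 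Both arguments handle the excluded value $Z=\infty$ identically.
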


\begin{proof}
We show initially  that the adjoint operator $( \mathcal F_0^*, D( \mathcal F_0^*))$ of $( \mathcal F_0, D( \mathcal F_0))$   is given by 
  \begin{equation}\label{F*2}
 \mathcal F_0^*=\mathcal F_0, \quad D( \mathcal F_0^*)=\{u\in H^2(\mathcal G) : u\in \mathcal C\}.
 \end{equation}
Indeed, formally for $u, v\in H^2(\mathcal G)$ we have 
\begin{equation}\label{relaself}
\begin{split}
 \langle\mathcal F_0v, u\rangle
 &=-\sum_{\bold e\in \bold E_{-}}v'_{\bold e}(0)u_{\bold e}(0)+\sum_{\bold e\in \bold E_{-}}v_{\bold e}(0)u'_{\bold e}(0) +
 \sum_{\bold e\in \bold E_{+}}v'_{\bold e}(0)u_{\bold e}(0)-\sum_{\bold e\in \bold E_{+}}v_{\bold e}(0)u'_{\bold e}(0)\\
 &\quad \quad +\langle v,\mathcal F_0u\rangle.
 \end{split}
 \end{equation}
Denote by $D_0^*=\{u\in H^2(\mathcal G) : u\in \mathcal C\}$. Then we will show $D_0^*=D( \mathcal F_0^*)$. Indeed, we see initially $D_0^*\subset D( \mathcal F_0^*)$. So,  for $u\in D_0^*$ and $v\in D( \mathcal F_0)$ follow from \eqref{relaself}
\begin{align}\label{relaself2}
 \langle\mathcal F_0v, u\rangle
 =u_{1,+}(0+)[-\sum_{\bold e\in \bold E_{-}}v'_{\bold e}(0)+
 \sum_{\bold e\in \bold E_{+}}v'_{\bold e}(0)]+\langle v,\mathcal F_0u\rangle=\langle v,\mathcal F_0u\rangle
 \end{align}
then $u\in D(\mathcal F_0^*)$ and $\mathcal F_0^*u=\mathcal F_0u$.

 Let us show the inverse inclusion $D_0^*\supseteq
D(\mathcal F_0^*)$. Take $u\in D(\mathcal F_0^*)$, then
for any $v\in D(\mathcal F_0)$ we have from \eqref{relaself}
\begin{align}\label{relaself3}
 \langle\mathcal F_0v, u\rangle
 =-\sum_{\bold e\in \bold E_{-}}v'_{\bold e}(0)u_{\bold e}(0)+
 \sum_{\bold e\in \bold E_{+}}v'_{\bold e}(0)u_{\bold e}(0)+\langle v,\mathcal F_0u\rangle=\langle v,\mathcal F^*_0u\rangle=\langle v,\mathcal F_0u\rangle.
 \end{align}
Thus, we arrive for any $v\in D(\mathcal F_0)$ at the equality 
\begin{equation}\label{adjoint}
 \sum_{\bold e\in \bold E_{+}}v'_{\bold e}(0)u_{\bold e}(0)-\sum_{\bold e\in \bold E_{-}}v'_{\bold e}(0)u_{\bold e}(0)=0
\end{equation} 
Next, it consider $v=(v_{1,-}, v_{2,-},..., v_{n,-}, ...0, 0,...0)\in D(\mathcal F_0)$ with $ v'_{3,-}(0-)=\cdot\cdot\cdot=v'_{n,-}(0-)=0$ and $ v'_{1,-}(0-)\neq 0$. Then from \eqref{adjoint} we obtain $v'_{1,-}(0-)[u_{1,-}(0-)-u_{2,-}(0-)]=0$ and so $u_{1,-}(0-)=u_{2,-}(0-)$. Repeating  similar arguments for $v$ being now $ v'_{4,-}(0-)=\cdot\cdot\cdot=v'_{n,-}(0-)=0$ and $ v'_{3,-}(0-)\neq 0$ we get $u_{1,-}(0-)=u_{2,-}(0-)=u_{3,-}(0-)$ and so on. Finally
taking $v$ such that
${v}'_{n, -}(0-)= 0$, we arrive at $u_{1,-}(0-)=u_{2,-}(0-)=u_{3,-}(0-)=\cdot\cdot\cdot=u_{n-1,-}(0-)$, and
consequently $u_{1,-}(0-)=u_{2,-}(0-)=u_{3,-}(0-)=\cdot\cdot\cdot=u_{n,-}(0-)$. Similarly, we see $u_{1,+}(0+)=u_{2,+}(0+)=u_{3,+}(0+)=\cdot\cdot\cdot=u_{n,+}(0+)$. Lastly, we see that $u_{1,-}(0-)=u_{1,+}(0-)$. Thus, let $v\in D(\mathcal F_0)$ such that $v'_{2,-}(0-)=\cdot\cdot\cdot=v'_{n,-}(0-)=v'_{2,+}(0+)=\cdot\cdot\cdot=v'_{n,+}(0+)=0$ and   $v'_{1,+}(0+)\neq 0$. Then from \eqref{adjoint} and from the relation $v'_{1,+}(0+)[u_{1,+}(0+)-u_{1,-}(0-)]=0$ follow that $u\in D^*_0$. Therefore, \eqref{F*2} holds.

From \eqref{F*2} we obtain that the deficiency indices  for $( \mathcal F_0, D( \mathcal F_0))$ is $n_{\pm}( \mathcal F_0)=1$. Indeed,  $Ker(\mathcal F_0^*\pm iI)=[\Psi_{\pm}]$ with $\Psi_{\pm}=(\Psi_{\bold e, \pm})_{\bold e\in \bold E}$ defined by
\begin{equation}
\Psi_{\bold e, \pm}=\left\{ \begin{array}{ll}
\Big(\frac{i}{k_{\pm}}e^{\mp ik_{\pm}x},...,\frac{i}{k_{\pm}}e^{\mp ik_{\pm}x}\Big),\;\;x<0,\;\; \bold e \in  \bold E_{-}\\
\\
\Big(\frac{i}{k_{\pm}}e^{\pm ik_{\pm}x},...,\frac{i}{k_{\pm}}e^{\pm ik_{\pm}x}\Big),\;\;x>0,\;\; \bold e \in  \bold E_{+}
 \end{array}  \right.
\end{equation}
$k^2_{\pm}=\mp i$, $Im(k_{-})<0$ and $Im(k_{+})>0$.
 
 Next, let us show that the domain of any self-adjoint extension
$\widehat{\mathcal F}$ of the operator $\mathcal F_0$ in \eqref{F0} and domain \eqref{F1} (and acting on complex-valued functions) is given by $D_{Z, \delta}$ in \eqref{tail3}. Indeed, we recall that $D(\widehat{\mathcal F})$ is a restriction of $D({\mathcal F}^*_0)$, so $ D(\widehat{\mathcal F})\subset \mathcal C$, moreover,  due to von-Neumann decomposition above and \cite[Theorem A.1]{Albe} follow
$$
D(\widehat{\mathcal F})=\left\{u\in H^2(\mathcal G): u= u_0+c \Psi_{-}+ce^{i\theta}\Psi_{+}:\,  u_0\in \mathcal F_0, c\in\mathbb{C},\theta\in[0,2\pi)\right\},
$$
Thus, it is easily seen that for $u\in D(\widehat{\mathcal F})$, we
have
\begin{align}\label{Zcondi}
\sum_{\bold e\in \bold E_{+}}u'_{\bold e}(0+)- \sum_{\bold e\in \bold E_{-}}u'_{\bold e}(0-)=2cn(1-e^{i\theta}),\;\; u_{1,+}(0+)=-c(e^{i\frac{\pi}{4}}-e^{i(\theta-\frac{\pi}{4})}).
\end{align}
From the last equalities it follows that  
\begin{align}\label{Zcondi2}
\sum_{\bold e\in \bold E_{+}}u'_{\bold e}(0+)- \sum_{\bold e\in \bold E_{-}}u'_{\bold e}(0-)=Znu_{1,+}(0+),\,\,
\text{where}\,\,
Z=\frac{-2(1-e^{i\theta})}{e^{i\frac{\pi}{4}}-e^{i(\theta-\frac{\pi}{4})}}\in\mathbb{R}\cup \{\pm \infty\}.
\end{align}
This finishes the proof.
\end{proof}


The idea of the following results  is to establish initially  a representation formula for the unitary group   associated to the linear evolution equation
\begin{equation}\label{group1}
\left\{ \begin{array}{ll}
u_t=A_{Z} u,\quad t\in \mathbb R\\
u(0)= u_0\in D(A_{Z}),
  \end{array}  \right.
\end{equation}
where $(A_Z, D(A_Z))$ is determined in Proposition \ref{L} (the case of two half-lines).  After that we establish the corresponding formula  in the case of the operators  $(H_Z, D(H_Z))$  determined in \eqref{L_Z} (the case of a balanced star graph). Thus, 
without loss of generality we assume $\alpha_-=\alpha_+=\beta_-=\beta_+=1$. Since $A_Z$ is a skew-self-adjoint operator, by Stone's theorem, the solution $u(t)=W(t)u_0$ is given by a  unitary group $\{W(t)\}_{t\in \mathbb R}$ on $L^2(\mathcal G)$ with associated infinitesimal generator  $A_Z$. Thus, for denoting $W(t)=W_{-}(t)\oplus W_{+}(t)$ and $w=(p,q)\in L^2(\mathcal G)=L^2(-\infty, +0)\oplus L^2(0, +\infty)$ we can see the action of $W(t)$ on $w$ as 
$$
W(t)w\equiv (W_{-}(t)p, W_{+}(t)q).
$$
The purpose of the following results is to establish explicit formulas for every $W_{\pm}$.

 \begin{lemma}\label{group2} Let $q\in L^2(0, +\infty)$ and $Re \lambda>0$. 
The non-homogeneous linear problem
 \begin{equation}\label{group3}
\left\{ \begin{array}{ll}
 & Nv(x)=q(x),\quad 0\leqq x<+\infty\\
 \\
 &v(0)=a_0, \; v(x), v'(x)\to 0\;\;\text{as}\;\; x\to+\infty.
  \end{array}  \right.
 \end{equation}  
 with $Nv(x)\equiv \lambda v(x)+v'(x)+v'''(x)$,  it has the representation
  \begin{equation}\label{group4}
 v_{+}(x)= a_0e^{\gamma_1x}+\int_0^\infty G_{+}(x, \zeta, \lambda)q(\zeta)d\zeta, \quad x\geqq 0
 \end{equation} 
where $G_{+}=G_{+}(x, \zeta, \lambda)$ is the associated Green's function for \eqref{group3} and $Re \gamma_1<0$.
\end{lemma}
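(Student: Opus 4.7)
The plan is to reduce the problem to standard theory of constant-coefficient linear ODEs on a half-line, analyzing the characteristic roots of the homogeneous equation and then building a Green's function to handle the inhomogeneity. The relevant characteristic polynomial associated to the homogeneous part of $Nv=0$ is $P(\gamma)=\gamma^{3}+\gamma+\lambda$. The first step is to show that for every $\lambda$ with $\operatorname{Re}\lambda>0$ the cubic $P$ has exactly one root $\gamma_{1}$ with $\operatorname{Re}\gamma_{1}<0$ and two roots $\gamma_{2},\gamma_{3}$ with $\operatorname{Re}\gamma_{2,3}>0$. The key observation is that no root of $P$ can lie on the imaginary axis: if $\gamma=i\eta$ then $\lambda = i(\eta^{3}-\eta)$, contradicting $\operatorname{Re}\lambda>0$. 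Hence the signature of the real parts of the three roots is constant on the open right half-plane $\{\operatorname{Re}\lambda>0\}$, and a direct check at, say, small real $\lambda>0$ (using Descartes's rule together with $\gamma_{1}+\gamma_{2}+\gamma_{3}=0$) pins down the distribution $(-,+,+)$.

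Because the only homogeneous solutions that decay at $+\infty$ (together with their derivatives) lie in the one-dimensional span of $e^{\gamma_{1}x}$, the combination of the boundary condition $v(0)=a_{0}$ and the decay at infinity forces the homogeneous part of $v$ to be exactly $a_{0}e^{\gamma_{1}x}$. It then suffices to produce a particular solution $v_{p}$ of $Nv_{p}=q$ satisfying $v_{p}(0)=0$ and $v_{p}, v_{p}'\to0$ as $x\to+\infty$. I would construct $v_{p}$ as $v_{p}(x)=\int_{0}^{\infty}G_{+}(x,\zeta,\lambda)q(\zeta)\,d\zeta$ where $G_{+}(\cdot,\zeta,\lambda)$ solves $N_{x}G_{+}=\delta(x-\zeta)$ with $G_{+}(0,\zeta,\lambda)=0$ and $G_{+},\partial_{x}G_{+}\to0$ at $+\infty$.

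For the construction of $G_{+}$, I would use the standard two-sided ansatz: for $0\leqq x<\zeta$ write $G_{+}=A(\zeta)e^{\gamma_{1}x}+B(\zeta)e^{\gamma_{2}x}+C(\zeta)e^{\gamma_{3}x}$ constrained by $A+B+C=0$ (from $G_{+}(0,\zeta,\lambda)=0$); for $x>\zeta$ write $G_{+}=D(\zeta)e^{\gamma_{1}x}$, which is forced by the decay requirement at infinity together with $\operatorname{Re}\gamma_{2,3}>0$. The four unknowns $A,B,C,D$ are determined by the four conditions: the single constraint $A+B+C=0$, continuity of $G_{+}$ and $\partial_{x}G_{+}$ at $x=\zeta$, and the jump $[\partial_{x}^{2}G_{+}]_{\zeta-}^{\zeta+}=1$ obtained from integrating $\partial_{x}^{3}G_{+}=\delta(x-\zeta)$ across $\zeta$. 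The underlying $4\times 4$ matrix has a Vandermonde-type determinant that is nonzero because $\gamma_{1},\gamma_{2},\gamma_{3}$ are distinct, so $A,B,C,D$ exist and are unique. Superposing the homogeneous part $a_{0}e^{\gamma_{1}x}$ with $v_{p}$ yields the claimed representation \eqref{group4}.

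The main obstacle I anticipate is not the existence of $G_{+}$ (which is algebraic) but rather justifying that $v_{+}\in L^{2}(0,+\infty)$ with $v_{+}, v_{+}'\to0$ at infinity for a general $q\in L^{2}(0,+\infty)$. For this step I would extract from the explicit formula the exponential bound $|G_{+}(x,\zeta,\lambda)|\lesssim e^{\operatorname{Re}\gamma_{1}(x-\zeta)_{+}} + e^{-\kappa(\zeta-x)_{+}}$ with $\kappa=\min\{\operatorname{Re}\gamma_{2},\operatorname{Re}\gamma_{3}\}>0$ (and analogous bounds for $\partial_{x}G_{+}$), so that convolution with $G_{+}$ is a bounded map on $L^{2}(0,+\infty)$ by Schur's test and the decay of the kernel along the diagonal then delivers the required behavior at infinity.
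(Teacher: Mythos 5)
Your proposal is correct and follows essentially the same route as the paper: order the roots of $\gamma^{3}+\gamma+\lambda=0$ as one with negative and two with positive real part, force the homogeneous part to be $a_{0}e^{\gamma_{1}x}$ by the decay condition, and build $G_{+}$ from the two-sided exponential ansatz with $G_{+}(0,\zeta)=0$, continuity of $G_{+},\partial_{x}G_{+}$ and the unit jump in $\partial_{x}^{2}G_{+}$ at $x=\zeta$, solved by Cramer's rule. Your added justifications (that the root signature $(-,+,+)$ persists on $\{\operatorname{Re}\lambda>0\}$ because no root can be purely imaginary, and the Schur-test bound giving $L^{2}$-boundedness of the kernel) are points the paper takes for granted, so they strengthen rather than diverge from its argument.
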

 
 \begin{proof} Consideration is first directed to find the Green's function $G_{+}$ associated to the non-homogeneous linear problem \eqref{group3}, namely, with $a_0=0$ and $q\equiv 0$. Indeed, let $\gamma_1, \gamma_2, \gamma_3$ being the three roots of the characteristic equation
  \begin{equation}\label{group5}
\lambda +\gamma +\gamma^3=0, \quad \text{for}\;\; Re \lambda>0,
\end{equation} 
ordered so that 
\begin{equation}\label{roots}
Re \gamma_1<0,\;\; Re \gamma_2>0,\;\; Re\gamma_3>0.
\end{equation}
As we know $G_{+}$ is given as the unique solution of the problem
\begin{equation}\label{group6}
\left\{ \begin{array}{ll}
 & Ng(x,\zeta)=\delta(x-\zeta),\quad 0< x, \zeta <+\infty,\\
 \\
 &\text{for}\; 0<x<\zeta\;\text{we have}\; g(0)=0,\;\;\text{and}\;\; g(+\infty)=g'(+\infty)=0,\\
 \\
& g, \frac{dg}{dx}\;\text{are continuous at}\; x=\zeta;\quad\frac{d^2g}{dx^2}\Big|_{x=\zeta^+}-\frac{d^2g}{dx^2}\Big|_{x=\zeta^-}=1.
\end{array}  \right.
 \end{equation}  
Thus, since  the equation $Nv(x)=0$, for $x>0$,  has  the following fundamental set of solutions $\{e^{\gamma_1x}, e^{\gamma_2x}, e^{\gamma_3x}\}$ we obtain that the conditions $g(+\infty)=g'(+\infty)=0$ imply
\begin{equation}\label{group7}
g(x, \zeta)=d(\zeta) e^{\gamma_1x},\;\;\text{for}\; \zeta<x<+\infty.
 \end{equation} 
Next, the condition $g(0,\zeta)=0$ implies
\begin{equation}\label{group8}
g(x, \zeta)= a(\zeta)(e^{\gamma_2x}-e^{\gamma_3x})+ b(\zeta)(e^{\gamma_1x}-e^{\gamma_2x})
,\;\;\text{for}\;0<x<\zeta.
 \end{equation} 
Then, from the  conditions of continuity and jump for $g$ we obtain after an application of  Kramer's rule that
\begin{equation}\label{group8a}
\begin{array}{ll}
a(\zeta)=\frac{-1}{\Delta(\lambda)} &(\gamma_2-\gamma_1)e^{-\gamma_3\zeta}, \;\;b(\zeta)=\frac{-1}{\Delta(\lambda)} \Big[(\gamma_2-\gamma_1)e^{-\gamma_3\zeta}+ (\gamma_1-\gamma_3)e^{-\gamma_2\zeta}\Big],\\
\\ 
\text{and},& d(\zeta)=\frac{-1}{\Delta(\lambda)} \Big[(\gamma_2-\gamma_1)e^{- \gamma_3\zeta}+ (\gamma_1-\gamma_3)e^{-\gamma_2\zeta} +  (\gamma_3-\gamma_2)e^{-\gamma_1\zeta} \Big],\
\end{array}
\end{equation} 
where
$$
\Delta(\lambda)=(\gamma_1-\gamma_2)(\gamma_1-\gamma_3)(\gamma_2-\gamma_3).
$$
Therefore, $G_{+}$ is given explicitly by
\begin{equation}\label{group9}
\begin{array}{lll}
G_{+}(x, s, \lambda)&=\frac{1}{\Delta(\lambda)} \Big [(\gamma_3-\gamma_1)e^{\gamma_1x-\gamma_2\zeta}+(\gamma_1-\gamma_2)e^{\gamma_1x-\gamma_3\zeta}\\
\\
&+Y(x,\zeta)(\gamma_2-\gamma_3)e^{\gamma_1(x-\zeta)}\\
\\
&(1-Y(x,\zeta))\Big((\gamma_1-\gamma_3)e^{\gamma_2(x-\zeta)}+(\gamma_2-\gamma_1)e^{\gamma_3(x-\zeta)}\Big)\Big]
\end{array}
\end{equation} 
and 
\begin{equation}\label{group10}
Y(x,\zeta)=\left\{ \begin{array}{ll}
 & 1\quad\text{if}\;\; 0\leqq \zeta\leqq x,\\
 \\
 &0\quad\text{otherwise}.
 \end{array}  \right.
 \end{equation}  
Then, the solution for \eqref{group3} is given immediately by the superposition principle as the formula in \eqref{group4}.

  \end{proof}

Next, we find   the part $W_{-}$ of $W$ in \eqref{group3}.

 \begin{lemma}\label{group11} Let $p\in L^2(-\infty, 0)$ and $Re \lambda>0$. 
The non-homogeneous linear problem
 \begin{equation}\label{group12}
\left\{ \begin{array}{ll}
 & Nv(x)=p(x),\quad -\infty<x<0\\
 \\
 &v(0-)=a_1, v'(0-)=a_2, \; v(x)\to 0\;\;\text{as}\;\; x\to-\infty.
  \end{array}  \right.
 \end{equation}  
 with $Nv(x)\equiv \lambda v(x)+v'(x)+v'''(x)$,  it has the representation
  \begin{equation}\label{group13}
 v_{-}(x)=\alpha_1e^{\gamma_2x}+ \alpha_2e^{\gamma_3x}+\int_{-\infty}^0 G_{-}(x, \zeta, \lambda)p(\zeta)d\zeta, \quad x\leqq 0
 \end{equation} 
where $G_{-}=G_{-}(x, \zeta, \lambda)$ is the associated Green's function for \eqref{group12} and $Re \gamma_2>0$, $Re \gamma_3>0$. The constants $\alpha_i$ are chosen such that $ v_{-}(0-)=a_1$ and 
$ v'_{-}(0-)=a_2$
\end{lemma}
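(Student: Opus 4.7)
The plan is to mirror the strategy of Lemma \ref{group2} but to reverse the roles of the characteristic roots, since on the left half-line it is $e^{\gamma_2 x}$ and $e^{\gamma_3 x}$ (the roots with positive real part) that decay as $x\to -\infty$, while $e^{\gamma_1 x}$ blows up. Thus the homogeneous solutions in $L^2(-\infty,0)$ form a two-dimensional space spanned by $e^{\gamma_2 x}$, $e^{\gamma_3 x}$, which matches the fact that we are now imposing two boundary conditions at $0$ and one at $-\infty$ (in contrast to Lemma \ref{group2}, where it was the opposite).

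First I would split the solution by superposition as $v_{-}(x)=v_{-}^h(x)+v_{-}^p(x)$, where $v_{-}^h(x)=\alpha_1 e^{\gamma_2 x}+\alpha_2 e^{\gamma_3 x}$ absorbs the non-homogeneous boundary data and $v_{-}^p$ solves $Nv=p$ with homogeneous boundary conditions $v(0-)=v'(0-)=0$ and decay at $-\infty$. The constants $\alpha_1,\alpha_2$ are then uniquely determined by solving the $2\times 2$ linear system
\begin{equation*}
\alpha_1+\alpha_2=a_1-v_{-}^p(0-),\qquad \gamma_2\alpha_1+\gamma_3\alpha_2=a_2-(v_{-}^p)'(0-),
\end{equation*}
whose determinant $\gamma_3-\gamma_2$ is nonzero for $\re\lambda>0$ (the three roots of \eqref{group5} being distinct in this regime). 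In the statement these homogeneous-BC contributions are absorbed into the definition of $\alpha_1,\alpha_2$ themselves.

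Next I would construct the Green's function $G_{-}(x,\zeta,\lambda)$ as the unique solution of
\begin{equation*}
\left\{
\begin{array}{l}
N_x G_{-}(x,\zeta,\lambda)=\delta(x-\zeta),\quad -\infty<x,\zeta<0,\\[2pt]
G_{-}(0-,\zeta,\lambda)=\partial_x G_{-}(0-,\zeta,\lambda)=0,\\[2pt]
G_{-}(x,\zeta,\lambda)\to 0\text{ as }x\to -\infty,\\[2pt]
G_{-},\partial_x G_{-}\text{ continuous at }x=\zeta,\;[\partial_x^2 G_{-}]_{\zeta^-}^{\zeta^+}=1.
\end{array}
\right.
\end{equation*}
For $x<\zeta$ the decay at $-\infty$ forces $G_{-}(x,\zeta)=A(\zeta)e^{\gamma_2 x}+B(\zeta)e^{\gamma_3 x}$, while for $\zeta<x<0$ all three exponentials can appear, say $G_{-}(x,\zeta)=C(\zeta)e^{\gamma_1 x}+D(\zeta)e^{\gamma_2 x}+E(\zeta)e^{\gamma_3 x}$, with the two conditions at $x=0$ giving $C+D+E=0$ and $\gamma_1 C+\gamma_2 D+\gamma_3 E=0$. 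Imposing continuity of $G_{-},\partial_x G_{-}$ and the unit jump of $\partial_x^2 G_{-}$ at $x=\zeta$ yields a $5\times 5$ linear system in $A,B,C,D,E$ that can be reduced (after elimination using the two homogeneous constraints at $0$) to a $3\times 3$ system whose determinant equals $\Delta(\lambda)=(\gamma_1-\gamma_2)(\gamma_1-\gamma_3)(\gamma_2-\gamma_3)\neq 0$. Cramer's rule then provides explicit formulas for $A,B,C,D,E$ analogous to \eqref{group8a}.

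Once $G_{-}$ is built, the principle of superposition yields $v_{-}^p(x)=\int_{-\infty}^0 G_{-}(x,\zeta,\lambda)p(\zeta)\,d\zeta$, and rolling the boundary corrections back into $\alpha_1,\alpha_2$ produces the representation \eqref{group13}. The only non-routine step is the bookkeeping for $G_{-}$, i.e.\ assembling a closed-form expression in a manner parallel to \eqref{group9}--\eqref{group10}; everything else (linear-algebra solvability, $L^2$-integrability of the kernel for $\re\lambda>0$, and uniqueness via the decay condition) follows from the sign structure \eqref{roots} of the characteristic roots.
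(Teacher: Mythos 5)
Your proposal is correct and follows essentially the same route as the paper: both construct $G_{-}$ from the decaying pair $e^{\gamma_2 x},e^{\gamma_3 x}$ for $x<\zeta$, a solution satisfying the two homogeneous conditions at $0-$ for $\zeta<x<0$ (your $5\times 5$ system with two constraints is exactly the paper's one-parameter ansatz $t(\zeta)\bigl((\gamma_3-\gamma_2)e^{\gamma_1x}-(\gamma_3-\gamma_1)e^{\gamma_2x}+(\gamma_2-\gamma_1)e^{\gamma_3x}\bigr)$), and then match at $x=\zeta$ and superpose. The only cosmetic remark is that since $G_{-}$ and $\partial_xG_{-}$ vanish at $0-$, the boundary corrections you fold into $\alpha_1,\alpha_2$ are actually zero, so the $2\times2$ system is simply $\alpha_1+\alpha_2=a_1$, $\gamma_2\alpha_1+\gamma_3\alpha_2=a_2$.
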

 
 \begin{proof} We start  by finding the Green's function $G_{-}$ associated to the non-homogeneous linear problem \eqref{group3}, namely, with $a_1=a_2=0$ and $p\equiv 0$. Indeed, let $\gamma_1, \gamma_2, \gamma_3$ being the three roots of the characteristic equation \eqref{group5} such that $
Re \gamma_1<0$, $Re \gamma_2>0$, $Re\gamma_3>0$.
As we know $G_{-}$ is given as the unique solution of the problem
\begin{equation}\label{group14}
\left\{ \begin{array}{ll}
 & Ng(x,\zeta)=\delta(x-\zeta),\quad -\infty< x, \zeta <0,\\
 \\
 &\text{for}\; \zeta<x<0\;\text{we have}\; g(0-)=g'(0-)=0,\;\;\text{and}\;\;  g(-\infty)=0,\\
 \\
& g, \frac{dg}{dx}\;\text{are continuous at}\; x=\zeta;\quad\frac{d^2g}{dx^2}\Big|_{x=\zeta^+}-\frac{d^2g}{dx^2}\Big|_{x=\zeta^-}=1.
\end{array}  \right.
 \end{equation}  
Thus, since  the equation $Nv(x)=0$, for $x<0$,  has  the following fundamental set of solutions $\{e^{\gamma_1x}, e^{\gamma_2x}, e^{x\gamma_3x}\}$ we obtain that  condition $g(+\infty)=0$ implies
\begin{equation}\label{group15}
g(x, \zeta)=r(\zeta) e^{\gamma_2x}+s(\zeta) e^{\gamma_3x},\;\;\text{for}\; -\infty<x<\zeta<0.
 \end{equation} 
Next, the condition $g(0-,\zeta)=g'(0-,\zeta)=0$ implies
\begin{equation}\label{group16}
g(x, \zeta)= t(\zeta)\Big((\gamma_3-\gamma_2)e^{\gamma_1x}-(\gamma_3-\gamma_1)e^{\gamma_2x}+(\gamma_2-\gamma_1)e^{\gamma_3x}\Big)
,\;\;\text{for}\;\zeta<x<0.
 \end{equation} 
Then, from the  conditions of continuity and jump for $g$ we obtain 
\begin{equation}\label{group17}
\begin{array}{lll}
r(\zeta)=\frac{1}{\Delta(\lambda)} &\Big [(\gamma_3-\gamma_1)e^{-\gamma_2\zeta}+(\gamma_1-\gamma_3)e^{-\gamma_1\zeta}\Big],
\;\;t(\zeta)=\frac{1}{\Delta(\lambda)} e^{-\gamma_1\zeta},\\
\\ 
\text{and},& s(\zeta)=\frac{1}{\Delta(\lambda)} \Big[(\gamma_1-\gamma_2)e^{- \gamma_3\zeta}+ (\gamma_2-\gamma_1)e^{-\gamma_1\zeta} \Big],\
\end{array}
\end{equation} 
where
$\Delta(\lambda)=(\gamma_1-\gamma_2)(\gamma_1-\gamma_3)(\gamma_2-\gamma_3)$.
Therefore, $G_{-}$ is given by
\begin{equation}\label{group18}
\begin{array}{ll}
G_{-}(x, s, \lambda)&=\frac{1}{\Delta(\lambda)} \Big [(\gamma_1-\gamma_3)e^{\gamma_2x-\gamma_1\zeta}+(\gamma_2-\gamma_1)e^{\gamma_3x-\gamma_1\zeta}\\
\\
&+Y_{-}(x,\zeta)(\gamma_3-\gamma_2)e^{\gamma_1(x-\zeta)}\\
\\
&(1-Y_{-}(x,\zeta))\Big((\gamma_3-\gamma_1)e^{\gamma_2(x-\zeta)}+(\gamma_1-\gamma_2)e^{\gamma_3(x-\zeta)}\Big)\Big]
\end{array}
\end{equation} 
and 
\begin{equation}\label{group19}
Y_{-}(x,\zeta)=\left\{ \begin{array}{ll}
 & 1\quad\text{if}\;\; \zeta\leqq x\leqq 0,\\
 \\
 &0\quad\text{otherwise}.
 \end{array}  \right.
 \end{equation}  
Then, the solution for \eqref{group12} is given via the superposition principle by the formula in \eqref{group13}.
\end{proof}

Next, we determine the resolvent operator for the skew-self-adjoint operator $(A_Z, D(A_Z))$ in \eqref{domain8}.

\begin{proposition}\label{group20} Let $\lambda \in \mathbb C$ such that $Re \lambda>0$, $\alpha_{-}=\alpha_{+}=\beta_{-}=\beta_{+}=1$ and $Z\in \mathbb R$. Then the resolvent operator for $A_Z$, $R(\lambda; A_Z)=(\lambda I-A_Z)^{-1}:L^2(\mathcal G)\to D(A_Z)$  has the representation
 for  $\omega=(p,q)\in L^2(-\infty, 0)\oplus L^2(0, +\infty)$  as 
 $$
 R(\lambda; A_Z)\omega=(R_{-}(\lambda; A_Z)p, R_{+}(\lambda; A_Z)q)=(v_{-}, v_{+})
 $$
  with $v_{\pm}$ defined by \eqref{group4} and  \eqref{group13}, respectively. The constants $a_0, \alpha_1, \alpha_2$ in \eqref{group4}-\eqref{group13} are uniquely determined by the condition $(v_{-}, v_{+})\in D(A_Z)$.
  \end{proposition}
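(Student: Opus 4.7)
The plan is to verify directly that the ansatz $u=(v_-,v_+)$ with $v_+$ given by \eqref{group4} and $v_-$ given by \eqref{group13} produces a solution of the resolvent equation $(\lambda I-A_Z)u=\omega$ in $D(A_Z)$, and to fix the three free constants $a_0,\alpha_1,\alpha_2$ by imposing the three vertex conditions encoded in $D(A_Z)$ as specified in \eqref{domain8}.

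First, reading off the resolvent equation componentwise on each half-line, we obtain precisely the non-homogeneous third order problems treated in Lemmas \ref{group2} and \ref{group11} (with $\alpha_-=\alpha_+=\beta_-=\beta_+=1$, so the differential operator on each edge has characteristic polynomial $\gamma^3+\gamma+\lambda=0$, whose roots split as $\operatorname{Re}\gamma_1<0$ and $\operatorname{Re}\gamma_2,\operatorname{Re}\gamma_3>0$ for $\operatorname{Re}\lambda>0$). Hence the Green function representations \eqref{group4} on $(0,\infty)$ and \eqref{group13} on $(-\infty,0)$ produce functions that are $H^3$ on their respective half-lines, decay (together with their derivatives) at $\pm\infty$ by the sign choice of the roots, and solve the interior equation for arbitrary boundary data $a_0=v_+(0+)$, $\alpha_1,\alpha_2$ (which encode $v_-(0-)$ and $v_-'(0-)$).

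Next, to enforce $u\in D(A_Z)$, I would substitute $x=0$ into the explicit expressions for $v_\pm$, $v_\pm'$, $v_\pm''$ and impose the three vertex relations from \eqref{domain8}: the continuity $v_-(0-)=v_+(0+)$, the first-derivative jump $v_+'(0+)-v_-'(0-)=Zv_-(0-)$, and the second-derivative jump $v_+''(0+)-v_-''(0-)=\tfrac{Z^2}{2}v_-(0-)+Zv_-'(0-)$. Using the Green functions \eqref{group9} and \eqref{group18}, the traces at $0$ produce explicit linear functionals $\Lambda_i(p,q)$ of the data, and the three conditions reduce to a $3\times 3$ linear system
\[
M(\lambda,Z)\begin{pmatrix}a_0\\ \alpha_1\\ \alpha_2\end{pmatrix}=\begin{pmatrix}\Lambda_1(p,q)\\ \Lambda_2(p,q)\\ \Lambda_3(p,q)\end{pmatrix},
\]
with $M(\lambda,Z)$ depending only on $\gamma_1,\gamma_2,\gamma_3$ and $Z$.

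The main obstacle is to certify that $M(\lambda,Z)$ is invertible for every $\lambda$ with $\operatorname{Re}\lambda>0$. I would argue this abstractly rather than by grinding determinants: since Proposition \ref{L} identifies $(A_Z,D(A_Z))$ as a skew-self-adjoint extension of $(A_0,D(A_0))$, Stone's theorem gives $\sigma(A_Z)\subset i\mathbb R$ and therefore $\lambda\in\rho(A_Z)$ whenever $\operatorname{Re}\lambda>0$, so $(\lambda I-A_Z)^{-1}$ exists as a bounded operator on $L^2(\mathcal G)$; equivalently, the homogeneous problem $(\lambda I-A_Z)u=0$ has only the trivial solution in $D(A_Z)$, which forces $\det M(\lambda,Z)\neq 0$. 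Hence the constants $a_0,\alpha_1,\alpha_2$ are uniquely determined by $(p,q)$, the pair $(v_-,v_+)$ lies in $D(A_Z)$, and the bounded linear map $\omega\mapsto(v_-,v_+)$ so constructed coincides with $R(\lambda;A_Z)$. A final routine check is that the Green function kernels yield bounded operators from $L^2$ to $H^3$ on the respective half-lines, which follows from the exponential decay of $G_\pm$ in the variable $|x-\zeta|$ inherited from $\operatorname{Re}\gamma_1<0<\operatorname{Re}\gamma_2,\operatorname{Re}\gamma_3$.
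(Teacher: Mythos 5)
Your construction matches the paper's up to the decisive step: both proofs write $(v_-,v_+)$ via the Green-function formulas of Lemmas \ref{group2} and \ref{group11} and reduce membership in $D(A_Z)$ to a $3\times 3$ linear system in $(a_0,\alpha_1,\alpha_2)$. Where you diverge is in certifying invertibility of that system. The paper computes the coefficient matrix $\mathcal A$ in \eqref{group23} explicitly and shows $\det(\mathcal A)=(\gamma_3-\gamma_2)\bigl[\tfrac{Z^2}{2}+Z(\gamma_2+\gamma_3-\gamma_1)+(\gamma_1-\gamma_2)(\gamma_1-\gamma_3)\bigr]\neq 0$ by using Girard's relations to rule out real roots of the quadratic in $Z$. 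You instead argue abstractly: skew-self-adjointness of $A_Z$ (Proposition \ref{L} plus Stone's theorem) places $\sigma(A_Z)\subset i\mathbb R$, so a vanishing determinant would produce, via the homogeneous data $p=q=0$, a nontrivial $L^2$ element of $\ker(\lambda I-A_Z)$ with $\operatorname{Re}\lambda>0$ --- a contradiction. This is valid, provided you note (as is true for $\operatorname{Re}\lambda>0$, where the roots $\gamma_1,\gamma_2,\gamma_3$ are distinct) that a nontrivial kernel vector of the matrix genuinely yields a nonzero function, since $e^{\gamma_1 x}$ on $(0,\infty)$ and $e^{\gamma_2 x},e^{\gamma_3 x}$ on $(-\infty,0)$ are linearly independent and $L^2$-decaying. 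Your route is shorter and more robust (it would survive a change of vertex conditions with no new computation), while the paper's explicit determinant gives quantitative information on $\mathcal A^{-1}$ that is convenient when the resolvent formula is fed into the contour integral of Proposition \ref{group24}; either argument completes the proof.
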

 
 \begin{proof} Let $\omega=(p,q)\in L^2(-\infty, 0)\oplus L^2(0, +\infty)$ and $v=(v_{-}, v_{+})=R(\lambda; A_Z)\omega$. Then we obtain that $v_{-}, v_{+}$ satisfy the system
 \begin{equation}\label{group21}
\left\{ \begin{array}{lll}
 & \lambda v_{-}(x)+v_{-}'(x)+v_{-}'''(x)=p(x),\quad -\infty<x<0\\
 \\
 & \lambda v_{+}(x)+v_{+}'(x)+v_{+}'''(x)=q(x),\;\;\;\;\quad 0<x<+\infty\\
 \\
 &v_{-}(0-)=v_{+}(0+), \;\; v'_{+}(0+)-v'_{-}(0-)=Zv_{-}(0-)\\
 \\
 &v''_{+}(0+)-v''_{-}(0-)=\frac{Z^2}{2}v_{-}(0-)+Zv_{-}(0-),
  \end{array}  \right.
 \end{equation}   
 and therefore $v_{+}, v_{-}$ are defined by the formulas in \eqref{group4} and \eqref{group13}, respectively. The constants $a_0, \alpha_1, \alpha_2$ in \eqref{group4}-\eqref{group13} are the unique solution for the system
 \begin{equation}\label{group22}
  \mathcal A\left(\begin{array}{c}a_0 \\ \alpha_1 \\ \alpha_2\end{array}\right)=\left(\begin{array}{c}0 \\-\frac{d}{dx}\int_0^\infty G_{+}(x, \zeta, \lambda)q(\zeta)d\zeta\Big |_{x=0^{+}} \\ -\frac{d^2}{dx^2}\int_{-\infty}^0 G_{-}(x, \zeta, \lambda)p(\zeta)d\zeta\Big |_{x=0^{-}} -\frac{d^2}{dx^2}\int_0^\infty G_{+}(x, \zeta, \lambda)q(\zeta)d\zeta\Big |_{x=0^{+}}
 \end{array}\right)
\end{equation}
with
 \begin{equation}\label{group23}
 \mathcal A=\left(\begin{array}{ccc}1& -1 & -1\\ \gamma_1 & -(\gamma_2+Z) &  -(\gamma_3+Z) 
 \\ \gamma_1^2 & -(\gamma_2^2+\frac{Z^2}{2}+Z\gamma_2) & -(\gamma_3^2+\frac{Z^2}{2}+Z\gamma_3)\end{array}\right).
 \end{equation} 
We note that $det(\mathcal A)=(\gamma_3-\gamma_2)\Big [ \frac{Z^2}{2}+Z(\gamma_2+\gamma_3-\gamma_1) +(\gamma_1-\gamma_2)(\gamma_1-\gamma_3)\Big]\neq 0$ for all $Z\in \mathbb R$,  because of the Girard's relations
\begin{equation} 
\gamma_1+\gamma_2+\gamma_3=0,\;\;\gamma_1\gamma_2+\gamma_1\gamma_2+\gamma_2\gamma_3=1,\;\;\gamma_1\gamma_2\gamma_3=-\lambda,
 \end{equation} 
imply that the second-degree polynomial equation $ \frac{Z^2}{2}+Z(\gamma_2+\gamma_3-\gamma_1) +(\gamma_1-\gamma_2)(\gamma_1-\gamma_3)=0$ does not have real roots.
This finishes the proof.
\end{proof}

\begin{proposition}\label{group24}  The unitary group $\{W(t)\}_{t\in \mathbb R}$ associated to equation \eqref{group1} can be written for  $\omega=(p,q)\in L^2(-\infty, 0)\oplus L^2(0, +\infty)$ as $
W(t)w\equiv (W_{-}(t)p, W_{+}(t)q)$, with
 \begin{equation}\label{group25}
 \begin{array}{ll}
 &W_{-}(t)p(x)=\frac{1}{2\pi i}\int_{r-i\infty}^{r+i\infty} e^{\lambda t}R_{-}(\lambda; A_Z)p(x)d\lambda,\quad x\leqq 0,\\
 \\
 &W_{+}(t)q(x)=\frac{1}{2\pi i}\int_{r-i\infty}^{r+i\infty} e^{\lambda t}R_{+}(\lambda; A_Z)q(x)d\lambda,\quad x\geqq 0,
 \end{array} 
 \end{equation}   
with
 \begin{equation}\label{group26}
 \begin{array}{ll}
R_{-}(\lambda; A_Z)p(x) =\alpha_1e^{\gamma_2x}+ \alpha_2e^{\gamma_3x}+\int_{-\infty}^0 G_{-}(x, \zeta, \lambda)p(\zeta)d\zeta, \quad x\leqq 0,\\
\\
R_{+}(\lambda; A_Z)q(x) =\alpha_3e^{\gamma_1x}+\int_0^{+\infty}G_{+}(x, \zeta, \lambda)q(\zeta)d\zeta, \qquad\qquad\quad x\geqq 0,
\end{array} 
\end{equation} 
 where $G_{\pm}(x, \zeta, \lambda)$ are the associated Green's functions for \eqref{group3} and  \eqref{group12}, respectively, and $\alpha_1, \alpha_2, \alpha_3\in \mathbb R$ are uniquely determined by the condition $(R_{-}(\lambda; A_Z)p, R_{+}(\lambda; A_Z)q)\in D(A_Z)$.
\end{proposition}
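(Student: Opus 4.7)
The plan is to derive formula \eqref{group25} by inverting the Laplace transform of the unitary group, using the explicit resolvent constructed in Proposition \ref{group20}. First I invoke Proposition \ref{L}: the operator $(A_Z, D(A_Z))$ is skew-self-adjoint on $L^2(\mathcal{G})$, hence by Stone's theorem it is the infinitesimal generator of a $C_0$-unitary group $\{W(t)\}_{t\in\mathbb{R}}$, and both the forward and backward semigroups $\{W(\pm t)\}_{t\geq 0}$ are contractions. In particular $\sigma(A_Z)\subset i\mathbb{R}$, so for every $\lambda$ with $\mathrm{Re}\,\lambda=r>0$ the resolvent $R(\lambda;A_Z)$ exists and one has the Laplace-transform identity
$$R(\lambda;A_Z)w = \int_0^{\infty} e^{-\lambda t}W(t)w\,dt,\qquad w\in L^2(\mathcal{G}).$$

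Next I would substitute the Green-function representation of $R_{\pm}(\lambda;A_Z)$ given in \eqref{group26} into the Fourier--Mellin/Bromwich inversion formula. For $w$ in the dense subspace $D(A_Z^2)\subset L^2(\mathcal{G})$, the standard resolvent expansion
$$R(\lambda;A_Z)w = \frac{1}{\lambda}w + \frac{1}{\lambda^2}A_Z w + \frac{1}{\lambda^2}R(\lambda;A_Z)A_Z^2 w$$
ensures that $e^{\lambda t}R(\lambda;A_Z)w$ is absolutely integrable along the vertical line $\mathrm{Re}\,\lambda=r$, so the classical Hille--Phillips inversion theorem (see, e.g., \cite{Pa}) yields
$$W(t)w = \frac{1}{2\pi i}\int_{r-i\infty}^{r+i\infty} e^{\lambda t}R(\lambda;A_Z)w\,d\lambda,\qquad t>0,$$
with the analogous identity for $t<0$ obtained by applying the same argument to the backward semigroup. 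Using the block-diagonal structure $W(t) = W_{-}(t)\oplus W_{+}(t)$ inherited from the piecewise action of $A_Z$ on $(-\infty,0)\cup(0,+\infty)$, one reads off \eqref{group25} componentwise, and a density argument extends the formula to arbitrary $w\in L^2(\mathcal{G})$ with the integral interpreted in the principal-value sense.

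The main obstacle will be the conditional convergence of the Bromwich integral on general $L^2$-data, since the resolvent of a skew-self-adjoint generator only satisfies $\|R(\lambda;A_Z)\|\leq 1/r$ along the vertical line with no further decay in $\mathrm{Im}\,\lambda$. This is bypassed precisely by first establishing absolute convergence on the dense subspace $D(A_Z^2)$ as above and then extending by a strong-continuity argument. A secondary point to verify is that each contour-integrand lies in $D(A_Z)$, i.e., that the three matching conditions in \eqref{domain8} are preserved; this is automatic from the construction of the coefficients $\alpha_1,\alpha_2,\alpha_3$ in Proposition \ref{group20} via the invertibility of the matrix $\mathcal{A}$ in \eqref{group23}, which was shown to hold for every $Z\in\mathbb{R}$.
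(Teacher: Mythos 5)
Your proposal is correct and follows essentially the same route as the paper: the paper's proof consists precisely of invoking the Laplace transform of the group, Proposition \ref{group20} for the resolvent representation, and standard semigroup theory to obtain the Bromwich inversion formula \eqref{group25}. The additional details you supply (contractivity via Stone's theorem, absolute convergence on the dense subspace $D(A_Z^2)$, extension by density, and the check that the integrand stays in $D(A_Z)$ via the invertibility of $\mathcal A$) are exactly the standard justifications the paper leaves implicit.
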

 
 \begin{proof}
Using the Laplace transform and Proposition \ref{group20}, it follows from semi-group theory that for  $\omega=(p,q)\in L^2(-\infty, 0)\oplus L^2(0, +\infty)$
 \begin{equation}\label{group27}
  \begin{array}{ll}
W(t)w(x)&=\frac{1}{2\pi i}\int_{r-i\infty}^{r+i\infty} e^{\lambda t}R(\lambda; A_Z)w(x)d\lambda\\
\\
&=\frac{1}{2\pi i}\Big( \int_{r-i\infty}^{r+i\infty} e^{\lambda t}R_{-}(\lambda; A_Z)p(x)d\lambda, \int_{r-i\infty}^{r+i\infty} e^{\lambda t}R_{+}(\lambda; A_Z)q(x)d\lambda\Big).
\end{array} 
\end{equation} 
This finishes the proof.
 \end{proof}

\begin{remark}\label{roots2}
	We note  by using Girard's relations that the three roots of the equation $\lambda-\gamma+\gamma^3=0$ for Re $\lambda>0$ also can be ordered as in \eqref{roots}. Thus Proposition \ref{group24} is also valid on the case $\alpha_-=\alpha_+=1$ and $\beta_-=\beta_+=-1$.
\end{remark}

The next basic result about the invariance of the subspace $D(H_Z)\cap \mathcal C$ (defined in Section 6, \eqref{L_Z}-\eqref{conti}) by the unitary group generated by $H_Z$ was used in the proof of the instability Theorem \ref{main2} in the case of a balanced  star graph.  We note that in the case of two half-lines, this invariance property for the domain $D(A_Z)$ in \eqref{domain8} is obvious, but for general star graphs  is not immediate.

\begin{proposition}\label{group28}  Consider the skew-self-adjoint operator $(H_Z, D(H_Z))$ in \eqref{L_Z} on a star graph $\mathcal G$ with a structure $\bold E\equiv \bold E_{-}\cup \bold E_{+}$ where $|\bold E_{+}|=|\bold E_{-}|=n$, $n\geqq 2$. Let $\{W(t)\}_{t\in \mathbb R}$ be the unitary group associated to $H_Z$. Then, for $\mathcal C$ defined by
  \begin{equation}\label{group29} 
 \mathcal C =\{(u_ \bold e)_{\bold e\in \bold E}\in L^2(\mathcal G): u_{1, -}(0-)=...=u_{n, -}(0-)=u_{1, +}(0+)=...=u_{n, +}(0+)\},
  \end{equation}
we have that $D(H_Z)\cap \mathcal C$ is invariant by the group $W(t)$.
\end{proposition}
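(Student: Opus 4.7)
The proof strategy rests on two structural observations about $H_Z$ on a balanced star graph. First, the matrix $L$ in \eqref{L2} is block lower-triangular with diagonal blocks equal to the $n\times n$ identity and off-diagonal blocks proportional to $I$; consequently the three vector conditions \eqref{condiZ} defining $D(H_Z)$ decouple component-wise across $i\in\{1,\dots,n\}$. Each pair $(u_{i,-},u_{i,+})$ independently satisfies the $\delta$-type system cutting out $D(A_Z)$ in Proposition~\ref{L}, so under the natural identification $L^2(\mathcal G)\cong\bigoplus_{i=1}^{n}\bigl(L^2(-\infty,0)\oplus L^2(0,+\infty)\bigr)$, the operator $H_Z$ becomes the direct sum $\bigoplus_{i=1}^{n}A_Z^{(i)}$, and its unitary group factorizes as $W(t)=\bigoplus_{i=1}^{n} W_{A_Z}^{(i)}(t)$, with each summand given by the explicit Green-function formula of Proposition~\ref{group24}.

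Second, I would exploit the $S_n$-symmetry of the balanced graph: for $\pi\in S_n$, the unitary operator $T_\pi$ on $L^2(\mathcal G)$ defined by $(T_\pi u)_{i,\pm}:=u_{\pi(i),\pm}$ preserves $D(H_Z)$ and commutes with $H_Z$, because the component-wise boundary conditions \eqref{condiZ} and the constant coefficients $\alpha_\bold e,\beta_\bold e$ are $S_n$-equivariant. By uniqueness of the skew-self-adjoint group one concludes $T_\pi W(t)=W(t)T_\pi$ for every $\pi\in S_n$ and $t\in\mathbb R$. Averaging yields the orthogonal symmetrizer $P:=\tfrac{1}{n!}\sum_{\pi}T_\pi$, which commutes with $W(t)$ and projects onto the closed edge-symmetric subspace $L^2_n(\mathcal G)$; since edge-symmetry together with the matching $u_{i,-}(0-)=u_{i,+}(0+)$ forces the common-vertex-value property of $\mathcal C$, one obtains that $L^2_n(\mathcal G)\cap D(H_Z)\subset D(H_Z)\cap\mathcal C$ is $W(t)$-invariant.

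To upgrade to $D(H_Z)\cap\mathcal C$ itself, I would decompose a given $f\in D(H_Z)\cap\mathcal C$ with common vertex value $c_0$ as $f=Pf+(I-P)f$; the first summand evolves inside $L^2_n\cap D(H_Z)\subset\mathcal C$ by the preceding paragraph, while the second summand $w:=(I-P)f$ belongs to the subspace $\mathcal D:=\{v\in D(H_Z):\,v(0\pm)=0\text{ in }\mathbb C^n\}$ (because all vertex values of $f$ coincide with the average $c_0$, so subtracting $Pf$ makes them vanish). Since $P$ commutes with $W(t)$, one has $W(t)w\in(I-P)L^2(\mathcal G)$, and it suffices to show that $\mathcal D$ itself is $W(t)$-invariant; via the direct-sum factorization of the previous paragraph, this reduces to showing that the vanishing-vertex-trace subspace inside $D(A_Z)$ is preserved by the two-half-line group $W_{A_Z}(t)$.

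The main obstacle is precisely this last invariance step. I would attack it using the resolvent representation from Proposition~\ref{group24}: for $v\in D(A_Z)$ with $v(0\pm)=0$, the $\delta$-constraints simplify to $v'(0+)=v'(0-)$ and $v''(0+)-v''(0-)=Zv'(0-)$, and one must verify that the unique coefficients $(a_0,\alpha_1,\alpha_2)$ in \eqref{group26} produced by the $3\times 3$ system \eqref{group22}-\eqref{group23} yield vanishing boundary traces of $R(\lambda;A_Z)v$ at $x=0\pm$. This amounts to a delicate algebraic cancellation in the Cramer formulas involving the roots $\gamma_1,\gamma_2,\gamma_3$ of the characteristic equation \eqref{group5}, after which the Laplace inversion \eqref{group25}-\eqref{group26} transfers the conclusion to $W_{A_Z}(t)v$ and completes the proof.
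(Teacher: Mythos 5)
Your reduction is set up correctly, but the step you yourself flag as ``the main obstacle'' is not a computation waiting to be carried out: it is false, so the proof cannot be completed along this route. If $v=(v_-,v_+)\in D(A_Z^2)$ has $v(0\pm)=0$, the conditions in \eqref{domain8} collapse to $v'_+(0+)=v'_-(0-)$ and $v''_+(0+)-v''_-(0-)=Zv'_-(0-)$, and neither these nor the conditions expressing $A_Zv\in D(A_Z)$ constrain the quantity $(A_Zv)(0\pm)=v'''(0\pm)+v'(0\pm)$. Since for $v\in D(A_Z^2)$ the map $t\mapsto e^{tA_Z}v$ is $C^1$ into $D(A_Z)$ with the graph norm, which dominates the $H^3$ norm on each half-line and hence controls the vertex traces, one gets $\frac{d}{dt}\big|_{t=0}\,(e^{tA_Z}v)(0\pm)=(A_Zv)(0\pm)$; choosing $v$ compactly supported with jets $v(0\pm)=v'(0\pm)=v''(0\pm)=0$, $v'''(0\pm)=1$ (and fourth and fifth derivatives adjusted so that $A_Zv\in D(A_Z)$) makes this derivative equal to $1$, so the trace leaves $0$ instantly. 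No algebraic cancellation in the Cramer system \eqref{group22}--\eqref{group23} can produce the invariance of your subspace $\mathcal D$. Worse, because the rest of your argument is sound --- $(I-P)f$ does lie in $\mathcal D$, and a vector of traces with zero permutation-average lies in $\mathcal C$ only if all traces vanish --- your decomposition shows that the statement for general $f\in D(H_Z)\cap\mathcal C$ is \emph{equivalent} to the invariance of $\mathcal D$; pushed to its conclusion, your argument therefore refutes the claim for non--edge-symmetric data rather than proving it.

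For comparison: the paper's proof shares your first step (the boundary matrix $L$ in \eqref{L2} decouples \eqref{condiZ} pairwise, so $W(t)$ splits into $n$ copies of the two--half-line group of Proposition \ref{group24}), but then only records that $W_{j,-}(t)u_{j,-}(0-)=W_{j,+}(t)u_{j,+}(0+)$ for each $j$ --- which is just the preservation of $D(H_Z)$ and says nothing about equality of traces \emph{across} different $j$, the actual content of $\mathcal C$. The genuinely new and correct part of your proposal is the $S_n$-symmetrization: $T_\pi$ commutes with $H_Z$, hence with $W(t)$, so the edge-symmetric subspace $L^2_n(\mathcal G)\cap D(H_Z)$, which is contained in $\mathcal C$, is $W(t)$-invariant. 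That weaker invariance is exactly what is used downstream (the profiles $U_Z$ and the cone construction in the proof of Theorem \ref{main2} live in $L^2_n(\mathcal G)$), and I would recommend stating and proving that statement instead of the proposition as written.
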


\begin{proof} From Proposition \ref{group24}, for $(u_ \bold e)_{\bold e\in \bold E}=(u_{1, -}, ...,u_{n, -}, u_{1, +},...,u_{n, +})\in L^2(\mathcal G)$ we define 
 \begin{equation}\label{group30}
 \begin{array}{ll}
 &W_{j,-}(t)u_{j, -}(x)=\frac{1}{2\pi i}\int_{r-i\infty}^{r+i\infty} e^{\lambda t}R_{-}(\lambda; H_Z)u_{j, -}(x)d\lambda,\quad x\leqq 0,\;\; 1\leqq j\leqq n,\\
 \\
 &W_{j,+}(t)u_{j, +}(x)=\frac{1}{2\pi i}\int_{r-i\infty}^{r+i\infty} e^{\lambda t}R_{+}(\lambda; H_Z)u_{j, +}((x)d\lambda,\quad x\geqq 0,\;\; 1\leqq j\leqq n,
 \end{array} 
 \end{equation}   
with $R_{\pm}(\lambda; H_Z)$-components given by \eqref{group26}. Thus, we can write
$$
W(t)=\bigoplus\limits_{j=1}^nW_{j,-}  \oplus \bigoplus\limits_{j=1}^nW_{j,+}.
$$
Now, for $u=(u_ \bold e)_{\bold e\in \bold E}\in D(H_Z)$ is obvious by definition of $W_{j,\pm}$ that $W(t)u\in  D(H_Z)$ (see Proposition \ref{group20}). Moreover, for all $j$, 
$$W_{j,-}(t)u_{j, -}(0-)=W_{j,+}(t)u_{j, +}(0+),\quad \text{for all}\;\;j\in \{1,2,\cdot\cdot\cdot, n\}.$$
Thus, for $u=(u_ \bold e)_{\bold e\in \bold E}\in D(H_Z)\cap \mathcal C$ follows immediate from \eqref{group30}  that $W(t)u\in \mathcal C$. This finishes the proof.

\end{proof}

\begin{remark} From Remark \ref{roots2} follows immediately that Proposition \ref{group28} is also true for $(\alpha_e)_{e\in \bold E}=(1)_{e\in \bold E}$ and $(\beta_e)_{e\in \bold E}=(-1)_{e\in \bold E}$.
	\end{remark}

\vskip0.2in

 \noindent
{\bf Acknowledgements.} 
 J. Angulo  was supported 
in part by   CNPq/Brazil Grant. M. Cavalcante 
 wishes to thank the University of S\~ao Paulo, where part of the
paper was written, for the financial support, for the invitation and hospitality. The authors would like to thank O. Lopes for fruitful conversations about his manuscript \cite{Lopes}.

\end{document}